\documentclass{article}  

\usepackage{fullpage}
\usepackage{amsmath,graphicx,amsthm,amsfonts,amssymb,amstext}
\usepackage{ragged2e, enumerate, graphicx, lscape, framed}
\usepackage{pgf,tikz, float,subcaption}
\usetikzlibrary{graphs, graphs.standard, decorations.pathreplacing, calligraphy}
\usepackage[symbols,nogroupskip,sort=none]{glossaries-extra}
\usepackage{subfiles}
\usepackage[T1]{fontenc}

\newtheorem{theorem}{Theorem}[section]
\newtheorem{lemma}{Lemma}[section]
\newtheorem{proposition}{Proposition}[section]

\newtheorem{conjecture}{Conjecture}[section]

\newtheorem*{remark}{Remark}


\usepackage{hyperref}
\hypersetup{
	colorlinks=true,
	linkcolor=blue,
    citecolor=purple,
    urlcolor=purple,
}

\setlength\parindent{0pt}
\setlength{\parskip}{5pt}


\DeclareMathOperator{\tr}{tr}

\DeclareMathOperator{\diam}{diam}
\newcommand{\abs}[1]{|#1|}
\newcommand{\norm}[1]{\lVert#1\rVert}

\title{Refinement of a conjecture on positive square energy of graphs}

\author{Saieed Akbari, Hitesh Kumar, Bojan Mohar \\ Shivaramakrishna Pragada, Shengtong Zhang}
\date{}

\begin{document}
\maketitle

\begin{abstract}
Let $G$ be a simple graph of order $n$ with eigenvalues $\lambda_1(G)\geq \cdots \geq \lambda_n(G)$. Define 
\[s^+(G)=\sum_{\lambda_i >0}   \lambda_i^2(G),  \quad  s^-(G)=\sum_{\lambda_i<0} \lambda_i^2(G).\]
 It was conjectured by Elphick, Farber, Goldberg and Wocjan that for every connected graph $G$ of order $n$, $s^+(G) \ge n-1.$ We verify this conjecture for graphs with domination number at most 2. We then strengthen the conjecture as follows: if $G$ is a connected graph of order $n$ and size $m \geq n+1$, then $s^+(G) \geq n$. We prove this conjecture for claw-free graphs and graphs with diameter 2. 
\end{abstract}

\noindent
\textbf{Keywords:} Positive square energy, Negative square energy, Claw-free graph, Domination number, Graph eigenvalue.

\noindent 
\textbf{MSC:} 05C50, 05C69, 05C76 

\section{Introduction}

We use standard graph theory notation and terminology throughout the paper. Let $G = (V(G), E(G))$ be a finite simple graph of order $n$ and size $m$. If two vertices $u, v\in V(G)$ are adjacent in $G$, we write $u\sim v$, otherwise we write $u\nsim v$. Let $P_n$, $C_n$, $K_n$ denote the path graph, the cycle graph and the complete graph of order $n$, respectively. Let $K_{p,q}$ denote the complete bipartite graph with partite sets of order $p$ and  $q$. Let $\omega(G), \alpha(G)$ and $\gamma(G)$ denote the clique number, the independence number and the domination number of a graph $G$, respectively. The \textit{adjacency matrix} of $G$ is an $n\times n$ matrix $A(G) = [a_{uv}]$, where $a_{uv} = 1$ if $u\sim v$ and $a_{uv} = 0$, otherwise. The \emph{eigenvalues} of $G$ are the eigenvalues of $A(G)$. Since $A(G)$ is a real symmetric matrix, all eigenvalues of $A(G)$ are real and can be listed as 
\[\lambda_1(G) \geq  \cdots \geq \lambda_n(G).\] 

We denote by $n^+(G)$ and $n^-(G)$ (or simply $n^+$ and $n^-$ if $G$ is clear from context) the number of positive and the number of negative eigenvalues of $G$, respectively. For any real number $p\ge 0$, we define 
\[\mathcal{E}_p^+(G)=\sum_{i=1}^{n^+} \lambda_i^p(G), \qquad \mathcal{E}_p^-(G)=\sum_{i=n-n^-+1}^n |\lambda_i(G)|^p,\]
 and call them the \emph{positive p-energy} and the \emph{negative p-energy} of $G$, respectively.

The 1-energy of graphs has been extensively studied due to its applications in many areas, see, for instance, the survey \cite{Gutman_2017_survey}. Recently, there has been a growing interest in investigating 2-energy as new connections have been found relating 2-energy and (many variants of) chromatic number, see \cite{Ando_Lin_2015, Coutinho2024conic, Guo_Spiro_2024}. Motivated, in part, by the results on 2-energy, and in part, by Nikiforov's work on Schatten $p$-norm of graphs \cite{Nikiforov_2012_ExtremalNorm, Nikiforov_2016_GraphEnergy}, the study of $p$-energy was initiated in \cite{Tang_Liu_Wang_2025_firstpaper_revised}, which was followed by \cite{Akbari_2025_p_energy, Tang_Liu_Wang_2025_p_energy}. In a recent paper, Elphick, Tang and Zhang \cite{Elphick_Tang_Zhang_2025} obtained bounds for (many variants of) chromatic number in terms of $p$-energy for all $p\ge 0$, which strongly generalizes the previously existing results. The $p$-energy of graphs has already turned out to be a fruitful concept, albeit many questions remain open.   

Henceforth, for ease of notation and to be consistent with previous papers on 2-energy, we will denote $\mathcal{E}_2^+(G)$ (resp. $\mathcal{E}_2^{-}(G)$) by $s^+(G)$ (resp. $s^{-}(G)$). In this paper, we focus on the following seemingly innocent conjecture by 
Elphick, Farber, Goldberg and Wocjan \cite{Elphick_FGW_2016}. 

\begin{conjecture}[\cite{Elphick_FGW_2016}] \label{conj:square_main}
For every connected graph $G$ of order $n$, we have
    \[ \min\{s^+(G), s^-(G)\}\ge n-1.\]
\end{conjecture}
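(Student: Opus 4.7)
The plan is a Cauchy interlacing attack, working outward from the domination structure. For a vertex $v \in V(G)$ of maximum degree $\Delta$, I would form the two-dimensional subspace $U = \mathrm{span}(e_v,\ \mathbf{1}_{N(v)}/\sqrt{\Delta})$ in $\mathbb{R}^n$ and compute the compression of $A(G)$ to $U$ in this orthonormal basis, which yields
\[
M = \begin{pmatrix} 0 & \sqrt{\Delta} \\ \sqrt{\Delta} & 2e_N/\Delta \end{pmatrix},
\]
where $e_N$ denotes the number of edges in $G[N(v)]$. Since $\det M = -\Delta$, the top eigenvalue $\mu_+$ of $M$ satisfies $\mu_+^2 \ge \Delta$, and Cauchy interlacing then gives $s^+(G) \ge \lambda_1(G)^2 \ge \mu_+^2 \ge \Delta$. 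In particular, whenever $\gamma(G) = 1$ (so that $\Delta = n-1$), the bound $s^+(G) \ge n-1$ is immediate.

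The $s^-$ bound is more delicate, even in the $\gamma(G) = 1$ case. The same compression yields $\lambda_n(G) \le \mu_-$ with $\mu_+ \mu_- = -\Delta$, but $\mu_-^2 < n-1$ whenever $G[N(v)]$ contains an edge. To absorb the deficit $n-1 - \mu_-^2$, I would exploit the fact that $v$ is adjacent to every vertex of $H := G - v$ and examine the spectrum of $A(G)$ restricted to $\{0\}\oplus \mathbf{1}^{\perp}$. This spectrum coincides with that of $A(H)$ compressed to $\mathbf{1}^{\perp} \subset \mathbb{R}^{n-1}$, whose eigenvalues $\nu_1 \ge \cdots \ge \nu_{n-2}$ satisfy $\sum_i \nu_i = -d_H$ and $\sum_i \nu_i^2 = 2m(H) - d_H^2$, with $d_H = 2m(H)/(n-1)$. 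A careful accounting should show that the negative mass among the $\nu_i$ exactly fills the remaining gap, delivering $s^-(G) \ge n-1$.

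For $\gamma(G) = 2$ with a dominating pair $\{u,v\}$, I would partition $V(G) \setminus \{u,v\}$ into $W_u$, $W_v$, $W_{uv}$ by adjacency type and form the analogous subspace $U' = \mathrm{span}(e_u,\ e_v,\ \mathbf{1}_{W_u}/\sqrt{|W_u|},\ \mathbf{1}_{W_v}/\sqrt{|W_v|},\ \mathbf{1}_{W_{uv}}/\sqrt{|W_{uv}|})$, dropping any empty block. These vectors are orthonormal, so the compression is an explicit matrix of size at most five whose extreme eigenvalues are computable in closed form. A case distinction on whether $u \sim v$ and on which $W$-blocks are nonempty should produce top and bottom eigenvalues forcing $s^{\pm}(G) \ge n-1$.

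The main obstacle will be the negative side $s^-(G) \ge n-1$, for the following structural reason: the positive side reduces robustly to a single large eigenvalue coming from a star-like substructure, but the negative square energy can be distributed across many small eigenvalues, as in $K_n$, where every $\lambda_i = -1$ for $i \ge 2$ and yet $s^-(K_n) = n-1$. Any low-dimensional compression captures only a bounded number of negative eigenvalues, so the argument must supplement Cauchy interlacing with finer structural information---for instance, via the projected matrix $A(H)|_{\mathbf{1}^\perp}$ above, or via interlacing with richer induced subgraphs. The same obstruction is why I expect this approach to leave the conjecture open for $\gamma(G) \ge 3$: the compression dimension would have to scale with $\gamma$, and no uniform finite-dimensional argument appears to be available.
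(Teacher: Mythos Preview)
The statement you are attempting is Conjecture~\ref{conj:square_main}, which is \emph{open}: the paper does not prove it, but only establishes partial cases (Theorems~\ref{thm:claw_free_main}, \ref{thm:diameter_2_main}, and the $\gamma\le 2$ results). Your proposal does not prove it either---you say so yourself in the final paragraph, where you concede that the compression approach cannot reach $\gamma(G)\ge 3$. So at best this is a plan for the $\gamma\le 2$ fragment, and it should be compared against what the paper actually proves there: $s^+(G)\ge n-1$ for $\gamma\le 2$ (Sections~\ref{section:domination_1}--\ref{section:domination_2}), obtained via super-additivity (Theorem~\ref{thm:square_energy_partition}), the $P_3$-removal lemma (Lemma~\ref{lemma:P3_lemma}), and case analysis. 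The paper does \emph{not} prove $s^-(G)\ge n-1$ for $\gamma=2$; only Zhang's bound $s^-\ge n-\gamma$ (Theorem~\ref{thm:n-gamma}) is available.

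Your $s^+$ side for $\gamma=1$ is fine and classical: $\lambda_1^2\ge\Delta$ is well known and recovers the $n-1$ bound. But the $s^-$ arguments have genuine gaps. For $\gamma=1$, your formula $\sum_i\nu_i^2=2m(H)-d_H^2$ for the compression of $A(H)$ to $\mathbf{1}^\perp$ is only valid when $H$ is regular; in general the cross-terms from $P=I-\tfrac{1}{n-1}J$ introduce the degree-variance, and the ``careful accounting'' you allude to is not carried out. More seriously, for $\gamma=2$ you propose a five-dimensional compression and then, in the very next paragraph, correctly diagnose why low-dimensional compressions cannot control $s^-$: the negative mass can be spread over $\Theta(n)$ eigenvalues (as in $K_n$), while interlacing from a $k$-dimensional compression bounds only the $k$ extreme eigenvalues. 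You never resolve this tension, so the $\gamma=2$ case for $s^-$ is a hope, not an argument. Note that the paper's own methods for $s^-$ (super-additivity, $P_3$-removal) avoid compressions entirely precisely because of this obstruction, and even so the paper stops at $s^-\ge n-2$ when $\gamma=2$.
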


Conjecture \ref{conj:square_main} has been verified for some families of graphs, including hyper-energetic graphs (which implies that the conjecture is true for almost all graphs) and regular graphs. See \cite{Aida_2023, Elphick_FGW_2016, Elphick_Linz_2024} for partial results. Recently, in \cite{Zhang_2024_Extremal}, it was shown that $\min\{s^+, s^-\}\ge n-\gamma \ge \frac{n}{2}$, where $\gamma$ denotes the domination number of the graph. In \cite{Akbari_2024_Linear}, the lower bound $\frac{3n}{4}$ (for $n\ge 4$) was obtained. Both lower bounds were obtained using the super-additivity of square energy (see Theorem \ref{thm:square_energy_partition}). 

In light of Theorem \ref{thm:square_energy_partition}, it is natural to apply an inductive argument to prove Conjecture \ref{conj:square_main}. However, since the lower bound is $n-1$, such arguments do not work easily. This motivates us to ask: for which graph families does the stronger inequality $s^+(G) \geq n$ hold? After some computational investigation, we propose a conjecture in this direction.

\begin{conjecture}\label{conj:square_strong}
Let $G$ be a connected graph of order $n$ and size $m$. If $m \geq n+1$, then 
    \[s^+(G) \geq n.\]
\end{conjecture}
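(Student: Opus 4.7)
The plan is to proceed by induction on the cyclomatic number $\mu(G)=m-n+1$; under the hypothesis $\mu(G)\ge 2$, the base case is $\mu(G)=2$ (bicyclic graphs).

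For the inductive step, assume $\mu(G)\ge 3$. Then $G$ contains a cycle, hence a non-bridge edge $e$. The graph $G-e$ is connected with $n$ vertices, $m-1$ edges, and $\mu(G-e)=\mu(G)-1\ge 2$, so the induction hypothesis yields $s^+(G-e)\ge n$. The step then reduces to proving $s^+(G)\ge s^+(G-e)$, i.e.\ that $s^+$ is monotone under edge addition; this appears to hold in every small example we have computed, though it is not a direct consequence of Weyl's inequality.

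For the base case $\mu(G)=2$, the plan is a case analysis on the bicyclic structure. Every bicyclic graph is built from one of three ``kernels'' (two cycles sharing a single vertex, a theta-graph, or two cycles joined by a path) together with hanging trees. Since $s^+$ is monotone under induced subgraphs, one has $s^+(G)\ge s^+(K)$ for the kernel $K$. For each kernel family, one would bound $s^+(K)$ directly (e.g.\ via interlacing against a small dense induced subgraph such as $K_{2,3}$) and then propagate the inequality $s^+\ge n$ to the full bicyclic graph by tracking how the gap $s^+-n$ behaves under pendant attachments.

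\textbf{Main obstacle.} The central technical challenge is step (i), $s^+(G+e)\ge s^+(G)$. Weyl's inequality bounds each eigenvalue shift by at most $1$ under the rank-two update $e_ue_v^\top+e_ve_u^\top$, but positive eigenvalues can in principle vanish and negative ones can turn positive; tracking $s^+$ therefore requires a more refined argument, perhaps a variational description of $s^+$ combined with the explicit structure of the perturbation. A second difficulty is (ii), propagating the base-case bound through pendant additions, since attaching a pendant increases $n$ by $1$ but may increase $s^+$ by less (e.g.\ $s^+(K_3)=4$ while $s^+(K_3+\text{pendant})\approx 4.81$), so one must exploit slack in the kernel estimates. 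A fallback that sidesteps (i) is an edge-partition argument via Theorem \ref{thm:square_energy_partition}, but the naive spanning-tree-plus-extra-edges decomposition fails superadditivity in general (as witnessed by $C_5$, whose decomposition into five edges gives $\sum s^+=5>s^+(C_5)\approx 4.76$), so a finer decomposition would be required.
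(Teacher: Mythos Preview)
The statement is Conjecture~\ref{conj:square_strong}, which the paper leaves open; it is verified only for claw-free graphs (Theorem~\ref{thm:claw_free_main}) and diameter-$2$ graphs (Theorem~\ref{thm:diameter_2_main}). There is therefore no proof in the paper to compare against, and a complete argument would settle an open problem.

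Your outline has two genuine gaps, both of which you acknowledge. The inductive step rests on the edge-monotonicity $s^+(G)\ge s^+(G-e)$, which you do not prove. From the variational form $s^+(A)=\max_{Y\succ 0}\bigl(2\tr(AY)-\tr(Y^2)\bigr)$ (dual to Lemma~\ref{lem:pythagorean}) one gets $\frac{d}{dt}\,s^+(A+tE_{uv})\big|_{t=0}=4(A^+)_{u,v}$, and this can be negative---for instance $(A^+(P_5))_{1,5}=\sqrt{3}/12-1/4<0$---so $s^+$ is not monotone along the weighted interpolation, and any argument for the unit-weight endpoint must be global rather than perturbative. If edge-monotonicity for simple graphs is true at all, establishing it would be a result of independent interest, not a lemma one can quote.

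The base case is equally unresolved. You propose to bound $s^+$ on the bicyclic kernel and propagate through pendant attachments, but you correctly observe that a pendant raises $n$ by~$1$ while possibly raising $s^+$ by less, and you give no mechanism to control this shortfall over arbitrarily many attachments. For calibration: the paper's Theorem~\ref{thm:triangle_with_paths} treats only a triangle with three attached \emph{paths}, and already requires the new Gluing Lemma~\ref{lem:gluing}, delicate estimates on entries of $A^-(P_\ell)$ (Lemmas~\ref{lem:path-endpoint} and~\ref{lem:path-off-diagonal}), and computer verification up to length~$600$. General bicyclic graphs, with arbitrary trees attached to the kernel rather than paths, are strictly harder.
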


This conjecture is true for many families (for instance, regular graphs and graphs with $1$-energy at least $2n$, see \cite{Elphick_FGW_2016}) for which Conjecture \ref{conj:square_main} is known to be true. The above conjecture has been verified for all connected graphs up to $9$ vertices and all graphs in the Mathematica database with $n\le 100$ and $m\ge n+1$. A similar conjecture for $s^-$ does not hold, since there are many graphs (apart from $K_n$) with $m\ge n+1$ and $s^- < n$.

In this paper, we give improved bounds for the square energy of several graph families. Recall that the star $K_{1, 3}$ is called a \emph{claw} and a graph is called \emph{claw-free} if it does not contain a claw as an induced subgraph. The family of claw-free graphs is a rich family since it contains line graphs. Conjecture \ref{conj:square_main} is open for line graphs. In our first main result, we prove Conjecture \ref{conj:square_strong} for claw-free graphs, which implies the result for line graphs as well. We note that the Conjecture \ref{conj:square_main} is known for graphs $G$ with $\Delta(G)\le 2$. Also, if $G$ is a connected claw-free graph with $\Delta(G)\ge 3$, then it has a triangle and so $m\ge n+1$. We show the following in Section \ref{section:claw-free}.

\begin{theorem}\label{thm:claw_free_main} Let $G$ be a connected claw-free graph of order $n$ with $\Delta(G)\ge 3$. Then $s^+(G) \ge n$. 
\end{theorem}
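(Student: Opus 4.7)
The starting point is the well-known structural fact that in a claw-free graph $G$, for every vertex $v$ the subgraph $G[N(v)]$ has independence number at most $2$, so $N(v)$ is covered by at most two cliques. Choosing $v$ with $\deg(v) = \Delta(G) \ge 3$ and taking $v$ together with the larger of these cliques yields a clique of size $\ge \lceil \Delta(G)/2 \rceil + 1 \ge 3$, so in particular $\omega(G) \ge 3$.

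The plan is then to induct on $n$ and apply Theorem~\ref{thm:square_energy_partition} to the vertex partition $V(G) = V(K) \sqcup V'$, where $K$ is a maximum clique of size $\omega := \omega(G)$. Since $s^+(K_\omega) = (\omega-1)^2$ and each connected component $C$ of $G[V']$ is again a claw-free induced subgraph (so either $\Delta(C) \le 2$, making $C$ a path or cycle with $s^+(C) \ge |V(C)| - 1$, or $\Delta(C) \ge 3$, giving $s^+(C) \ge |V(C)|$ by induction), super-additivity gives
\[
s^+(G) \;\ge\; (\omega-1)^2 + (n - \omega) - k,
\]
where $k$ is the number of components of $G - V(K)$. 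This is $\ge n$ as soon as $k \le \omega^2 - 3\omega + 1$. For $\omega \ge 4$ the budget $k \le 5$ is comfortably met by using that each component of $G - V(K)$ attaches to $V(K)$ along a structure restricted by claw-freeness.

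The decisive case is $\omega(G) = 3$, where the budget only permits $k = 1$. Fortunately the hypotheses are very restrictive here: since $G[N(v)]$ must be simultaneously $K_3$-free and $\overline{K_3}$-free, Ramsey's theorem forces $\Delta(G) \le 5$. I would sharpen the slack calculation by replacing $K$ with a denser induced subgraph. If $G$ contains an induced diamond $K_4 - e$, then $s^+(K_4 - e) = \tfrac{9 + \sqrt{17}}{2}$ gives slack $\tfrac{1 + \sqrt{17}}{2} > 2$, enough to absorb two extra components. Otherwise $G$ is claw-free, $K_4$-free, and diamond-free, which forces $G$ to be the line graph of a triangle-free graph of maximum degree at most $3$. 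In this regime the classical bound $\lambda_{\min}(G) \ge -2$ controls $s^-(G) \le 4\,n^-(G)$, and combining this with $s^+(G) + s^-(G) = 2m$ and a direct edge-count on the underlying graph should close the gap.

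The main obstacle I foresee is the $\omega = 3$, diamond-free subcase, where the clique-based super-additivity leaves essentially no slack and one must invoke specifically spectral properties of line graphs of triangle-free graphs to finish. The higher-clique cases and the $\omega = 3$ case with an induced diamond are comparatively routine, being handled by the super-additivity machinery once the ``slack subgraph'' is chosen appropriately.
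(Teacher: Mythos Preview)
Your outline tracks the paper's proof reasonably well through the $\omega \ge 4$ case (though you should be aware that the bound on the number of components of $G-V(K)$ requires a short argument---this is Lemma~\ref{lemma:claw_free_component} in the paper, giving $k \le \omega+1$, which is exactly tight for your budget when $\omega = 4$). The diamond subcase for $\omega = 3$ is also underspecified: you assert that removing the diamond leaves at most two components, but give no reason, and the paper in fact does not argue this way---it instead cases on $\Delta(G)\in\{3,4,5\}$ and uses the full closed neighbourhood $N[u]$ as the dense piece.

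The genuine gap is your final subcase: $\omega=3$, diamond-free. Your proposed endgame---use $\lambda_{\min}\ge -2$ for line graphs, combine with $s^+ + s^- = 2m$ and an edge count---simply does not close. The critical family here is $P(j,k,\ell)$, a triangle with three pendant paths; these are line graphs of spiders, so your structural reduction is fine. But $P(j,k,\ell)$ is unicyclic, so $m=n$, while roughly half its eigenvalues are negative (it is a near-bipartite graph). Thus $s^- \le 4n^-$ only yields $s^+ \ge 2n - 4n^- \approx 0$, and the sharper $|\lambda_i|^2 \le 2|\lambda_i|$ gives $s^- \le \mathcal{E}(G)$, which for long paths is about $\tfrac{4}{\pi}n > n$---again useless. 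There is no slack to exploit: for these graphs $s^+$ really is $n + o(1)$. The paper explicitly calls this ``the most challenging part'' and devotes all of Section~4.1 to it, developing a new Gluing Lemma (Lemma~\ref{lem:gluing}) that refines super-additivity by tracking diagonal and off-diagonal entries of $A^-(P_\ell)$, combined with substantial computer verification for paths up to length $600$. Your plan does not contain the idea needed here.
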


The most challenging part of the above result is proving it for claw-free unicyclic graphs (see Theorem \ref{thm:triangle_with_paths}). In our proof, we introduce a new tool, namely the Gluing lemma \ref{lem:gluing}, which we believe is of independent interest and can be applied to prove improved bounds for the square energy of graphs.  

Next, we verify Conjecture \ref{conj:square_strong} for graphs with diameter 2 in Section \ref{section:diam_2}. Since almost all graphs have diameter 2, this gives an alternate proof that almost all graphs have $s^+\ge n$. We note here that it in \cite{Elphick_FGW_2016}, it was shown that random graphs $G(n,\frac{1}{2})$ have $s^+ \approx \frac{3n^2}{8}$. 

\begin{theorem}\label{thm:diameter_2_main}
Let $G$ be a graph of order $n$ and $\diam(G)=2$. 
\begin{enumerate}[$(i)$]
    \item If $G\not \in \{K_{1,n-1}, C_5\}$, then $s^+(G)\geq n$.
    \item We have $s^-(G) \ge n -O(\sqrt{n\log n})$.
\end{enumerate}
\end{theorem}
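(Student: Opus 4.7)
My plan for Part (i) begins with a universal baseline: for every diameter-$2$ graph $G$,
\[\lambda_1(G)^2 \;\geq\; n-1.\]
This follows from the diameter-$2$ inequality $(A^2)_{ij}+A_{ij}\geq 1$ for all $i\neq j$, which upon summing yields $\sum_v d(v)^2 \geq n(n-1)$; combining with the Rayleigh bound $\lambda_1(A)^2 = \lambda_1(A^2) \geq \frac{1}{n}\sum_v d(v)^2$ (using Perron-Frobenius to identify $\lambda_1(A)^2$ with the top eigenvalue of $A^2$) gives $\lambda_1^2\geq n-1$, and hence $s^+(G)\geq n-1$. The remaining task is to supply the missing unit of positive mass.

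I would then split on the sign of $\lambda_2(G)$. If $\lambda_2\leq 0$, Smith's classification tells us $G$ is complete multipartite $K_{p_1,\dots,p_k}$ with $k\geq 2$; since $G\neq K_{1,n-1}$, a direct quotient-matrix analysis (for instance, $\lambda_1^2=pq\geq p+q=n$ for $K_{p,q}$ with $p,q\geq 2$) delivers $\lambda_1^2\geq n$. If $\lambda_2>0$, then $s^+(G)\geq \lambda_1^2+\lambda_2^2$, and it suffices to show $\lambda_2^2\geq n-\lambda_1^2$; the delicate regime is $\lambda_1^2\in [n-1,n)$, which forces $G$ to lie near the equality case $\sum_v d(v)^2 = n(n-1)$ of the baseline bound. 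This equality holds precisely when $G$ is triangle-free with exactly one common neighbour per non-adjacent pair, characterising $K_{1,n-1}$ together with the Moore graphs ($C_5$, Petersen, Hoffman-Singleton, and possibly the hypothetical $3250$-vertex one); all extremal cases other than $K_{1,n-1}$ and $C_5$ satisfy $s^+\geq n$ by direct verification.

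The main obstacle for Part (i) is therefore the near-extremal regime, where the excess
\[\sum_v d(v)^2 - n(n-1) \;=\; 2\sum_{i<j}\bigl(c(i,j)+A_{ij}-1\bigr)\]
is positive but small. One must quantitatively convert this combinatorial excess into a lower bound on $\lambda_2^2$ that closes the gap $\lambda_1^2+\lambda_2^2\geq n$ outside the two exceptions; this stability step is the crux of the argument.

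For Part (ii), since $s^+(G)+s^-(G)=2m$, the desired bound $s^-(G)\geq n - O(\sqrt{n\log n})$ is equivalent to $s^+(G)\leq 2m-n+O(\sqrt{n\log n})$. I would derive this upper bound via a fourth-moment argument: Cauchy-Schwarz gives $(s^+)^2\leq n^+\cdot \tr(A^4)$, and $\tr(A^4) = \sum_v d(v)^2+2\sum_{i<j}c(i,j)^2$. The diameter-$2$ condition restricts the codegree sequence $\{c(i,j)\}$ from below, and a Chernoff-type tail estimate on the codegrees combined with a union bound over the $\binom{n}{2}$ pairs supplies the $\sqrt{\log n}$ factor. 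The main obstacle here is calibrating the concentration so that the slack is exactly $O(\sqrt{n\log n})$ rather than a weaker polynomial-logarithmic bound, which requires careful balancing of the dependence on $n^+$ and the codegree variance.
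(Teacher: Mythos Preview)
Your baseline $\lambda_1^2\geq n-1$ is exactly the paper's starting point, and it is obtained by the same codegree/$P_3$-counting argument. Beyond that your route diverges, and the divergence leaves a real gap. Your case $\lambda_2\leq 0$ (complete multipartite via Smith) is fine. The problem is the case $\lambda_2>0$ with $\lambda_1^2\in[n-1,n)$: you reduce it to a ``stability'' statement near the Moore-graph extremum and then stop. That stability step is not a formality. There is no ready-made result saying that a diameter-$2$ graph with small excess $\sum_v d(v)^2-n(n-1)$ must have $\lambda_2$ bounded below in a way that closes the gap $n-\lambda_1^2$; you would have to manufacture such a quantitative stability theorem from scratch, and you have not indicated a mechanism for doing so.

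The paper sidesteps this difficulty entirely. Instead of splitting on the sign of $\lambda_2$, it splits on whether $G$ contains an induced $2K_2$. If it does, interlacing gives $\lambda_2\geq 1$ outright and $s^+\geq(n-1)+1=n$. If $G$ is $2K_2$-free, the paper sharpens the baseline to
\[
\lambda_1^2 \;\geq\; n-1+\frac{2(2f+3t)}{n},
\]
where $t$ and $f$ are the numbers of triangles and induced $C_4$'s; whenever $t+f\geq n/4$ this already gives $\lambda_1^2\geq n$. The residual cases are then handled by a short case analysis on $\delta(G)$ ($\delta=1$ reduces to the dominating-vertex theorem; $\delta\geq 2$ forces enough triangles/$C_4$'s, with a couple of small exceptional configurations where one picks up a little extra from $\lambda_2$ or $\lambda_3$ via interlacing with $C_5$). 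The point is that the paper never needs a stability argument: the $2K_2$ dichotomy plus the refined $t,f$ count does the work your unproved stability step was meant to do.

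\textbf{Part (ii).} Your approach here is not viable. The graph $G$ is deterministic; there is no probability space on which to run a Chernoff bound on codegrees, and the inequality $(s^+)^2\leq n^+\cdot\tr(A^4)$ gives an upper bound on $s^+$ of the wrong shape for generic diameter-$2$ graphs (you have no control on $n^+$, and $\tr(A^4)$ can be of order $n^3$ or larger). The paper's proof is a two-line combination of known facts: for diameter-$2$ graphs one has $\gamma(G)=O(\sqrt{n\log n})$ (a result of Dubickas et al.), and for any graph $s^-(G)\geq n-\gamma(G)$ (Zhang's super-additivity bound). That is where the $\sqrt{n\log n}$ actually comes from, not from any concentration phenomenon.
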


It was shown in \cite{Zhang_2024_Extremal} that Conjecture \ref{conj:square_main} holds for graphs with domination number 1. We generalize this result for positive square energy in Sections \ref{section:domination_1} and \ref{section:domination_2} as follows. 

\begin{theorem} Let $G$ be a connected graph of order $n$. 
\begin{enumerate}[$(i)$]
    \item If $G\neq K_{1,n-1}$ and $G$ has a dominating vertex, then $s^+(G)\geq n$.
    \item If $G$ is a graph with domination number at most 2, then $s^+(G)\ge n-1$.
\end{enumerate}
\end{theorem}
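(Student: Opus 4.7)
My plan is to prove both parts by applying the super-additivity of positive square energy (Theorem \ref{thm:square_energy_partition}) to a partition of $E(G)$ induced by stars at the dominating vertices.

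For part (i), let $v$ be a dominating vertex and partition $E(G)=E_1\sqcup E_2$, where $E_1$ is the set of $n-1$ edges incident to $v$. The spanning subgraph with edge set $E_1$ is the star $K_{1,n-1}$, whose only nonzero eigenvalues are $\pm\sqrt{n-1}$, giving $s^+=n-1$. Since $G\ne K_{1,n-1}$, the set $E_2$ is nonempty, and any graph containing at least one edge has $\lambda_1\ge 1$ and hence $s^+\ge 1$. Super-additivity then yields $s^+(G)\ge (n-1)+1=n$.

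For part (ii), the case $\gamma(G)=1$ follows from (i): either $G=K_{1,n-1}$ with $s^+(G)=n-1$ directly, or part (i) gives $s^+(G)\ge n$. So assume $\gamma(G)=2$ with minimum dominating set $\{u,v\}$. Introduce $A=N(u)\setminus N[v]$, $B=N(v)\setminus N[u]$, $C=N(u)\cap N(v)$, so that $V(G)=\{u,v\}\cup A\cup B\cup C$ and $n=2+|A|+|B|+|C|$. Partition $E(G)$ into $E_1$ (edges at $u$), $E_2$ (edges at $v$ not already in $E_1$), and $E_3$ (the rest); the spanning subgraphs on $E_1,E_2$ are disjoint stars, so super-additivity applies.

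If $u\sim v$, these stars are $K_{1,d(u)}$ and $K_{1,d(v)-1}$, so $s^+(E_1)+s^+(E_2)=d(u)+d(v)-1$. A direct count using the above decomposition of $V(G)$ gives $d(u)+d(v)-1=n-1+|C|$, and hence $s^+(G)\ge n-1$. If $u\nsim v$, the two stars are $K_{1,d(u)}$ and $K_{1,d(v)}$ (no shared edge), contributing $d(u)+d(v)=n-2+|C|$. When $|C|\ge 1$ this already suffices; when $|C|=0$, connectivity of $G$ forces every $u$-$v$ path to have length at least $3$ (since $uv\notin E$ and there is no common neighbor), so at least one edge of such a path is incident to neither $u$ nor $v$; that edge lies in $E_3$, giving $s^+(E_3)\ge 1$ and therefore $s^+(G)\ge n-1$.

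The main obstacle is the last subcase ($u\nsim v$ with $|C|=0$), where the two-star partition falls exactly one unit short of the desired bound and must be repaired by extracting an extra edge from a shortest $u$-$v$ path. Elsewhere the argument is a clean combination of vertex counting with the super-additivity theorem.
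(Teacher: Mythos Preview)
Your argument rests on an edge-decomposition version of super-additivity, but Theorem~\ref{thm:square_energy_partition} is stated only for \emph{vertex-disjoint induced subgraphs} $H_1,\dots,H_k$ of $G$, not for spanning subgraphs with a common vertex set that merely partition $E(G)$. The edge-decomposition analogue you are using is in fact false. For instance, take $G=C_5$ and split its edges as $E_1=E(P_5)$ and $E_2$ the remaining edge. Then $s^+(P_5)=4$ and $s^+(K_2)=1$, whereas
\[
s^+(C_5)=4+2\bigl(2\cos\tfrac{2\pi}{5}\bigr)^2=5-\sqrt{5}\approx 4.76<5.
\]
So $s^+(G)<s^+(E_1)+s^+(E_2)$, and no such inequality is available. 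In your part~(i), the ``star at $v$'' and the ``remaining edges'' share all $n$ vertices, so Theorem~\ref{thm:square_energy_partition} simply does not apply; the same problem recurs throughout part~(ii).

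The paper's proof of (i) proceeds by induction on $n$: if $G-v$ contains a triangle, peel it off as an induced $K_3$ (a genuine vertex-disjoint piece) and use super-additivity together with the induction hypothesis; if $G-v$ contains an induced $2K_2$, interlacing gives $\lambda_2(G)\ge 1$, which combined with $\lambda_1(G)^2\ge n-1$ suffices; otherwise one lands in a small explicit family handled via Lemmas~\ref{lemma:inertia} and~\ref{lemma:two_positive_eigs}. Part~(ii) is considerably more involved, combining a majorization lemma for $\lambda_1^2+\lambda_2^2$ (Lemma~\ref{lemma:domination=2_eig}), the $P_3$-removal lemma, and a separate treatment of an infinite exceptional family $H(k,\ell)$. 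The two-line star decomposition you propose cannot replace this case analysis because the underlying inequality does not hold.
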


In Section \ref{section:P_3_applications}, we show that there is an absolute constant $c>0$ such that for graphs $G$ with $\alpha(G)\omega(G)\le cn$, we have $\min\{s^+(G), s^-(G)\}\ge n$ (Theorem \ref{thm:clique_independence}). Our proof makes use of an improved version of the $P_3$-removal lemma (Lemma \ref{lemma:P3_lemma}). This gives an alternate proof that a random graph $G \sim G(n, 1/2)$ satisfies $\min\{s^+(G), s^-(G)\} \geq n$ with high probability, as such graphs have $\alpha(G), \omega(G) \sim \log n$ with high probability. In fact, it also generalizes to $G \sim G(n, p)$ for a wide range of $p$. As another application of the $P_3$-removal lemma, we show that $s^-(G) \ge n - 1$ for graphs with 16-th power of a Hamiltonian cycle (Theorem \ref{thm:Hamiltonian_cycle}). We conclude this paper with several open problems (Section \ref{section:conclusions}) for interested readers.

\section{Preliminaries}\label{section:prelims}

In this section, we recall some known results that we will use later. 
 
\begin{theorem}[Interlacing \cite{Horn_Johnson_2013}]
    Let $A$ be an $n\times n$ Hermitian matrix, with eigenvalues $\lambda_1  \ge \cdots \ge \lambda_n$. Let $B$ be an $m\times m$ principal submatrix of $A$, with eigenvalues $\theta_1 \ge \cdots \ge \theta_m$. For $1\le i\le m$, we have 
    \[\lambda_i \ge \theta_i \ge \lambda_{i+n-m}.\]
\end{theorem}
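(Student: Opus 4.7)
The plan is to derive both inequalities from the Courant--Fischer min-max characterization of eigenvalues of a Hermitian matrix. First I would fix notation by identifying $B$ with the compression $P^{*}AP$, where $P\colon \mathbb{C}^m \to \mathbb{C}^n$ is the coordinate-inclusion isometry that picks out the rows and columns indexing the principal submatrix. Under this identification, $y^{*}By = (Py)^{*}A(Py)$ and $\|Py\| = \|y\|$ for every $y \in \mathbb{C}^m$, so Rayleigh quotients of $B$ on $\mathbb{C}^m$ correspond to Rayleigh quotients of $A$ on the coordinate subspace $V := P(\mathbb{C}^m) \subseteq \mathbb{C}^n$, and $k$-dimensional subspaces of $\mathbb{C}^m$ correspond bijectively to $k$-dimensional subspaces of $V$.

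For the upper bound $\theta_i \le \lambda_i$, I would invoke the max-min form of Courant--Fischer:
\[
\theta_i \;=\; \max_{\substack{T \subseteq \mathbb{C}^m \\ \dim T = i}} \; \min_{0 \neq y \in T} \frac{y^{*}By}{y^{*}y} \;=\; \max_{\substack{S \subseteq V \\ \dim S = i}} \; \min_{0 \neq x \in S} \frac{x^{*}Ax}{x^{*}x}.
\]
The outer maximum on the right is taken over a subcollection of the $i$-dimensional subspaces of $\mathbb{C}^n$ used to compute $\lambda_i$, so it is at most $\lambda_i$.

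For the lower bound $\theta_i \ge \lambda_{i+n-m}$, I would use the dual min-max form $\lambda_k(A) = \min_{\dim S = n-k+1} \max_{0 \neq x \in S} \frac{x^{*}Ax}{x^{*}x}$. Setting $k = i + n - m$ gives $\dim S = m - i + 1$, which matches the codimension appearing in the analogous min-max expression for $\theta_i$. The same restriction argument (the outer min for $\theta_i$ is over $S \subseteq V$ rather than over all of $\mathbb{C}^n$) then gives $\theta_i \ge \lambda_{i+n-m}$. Equivalently, one could apply the already-proved inequality $\theta_i \le \lambda_i$ to $-A$ and $-B$, using $\lambda_j(-A) = -\lambda_{n-j+1}(A)$ to convert it into the desired bound.

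The main obstacle is essentially bookkeeping: verifying the index shift $k = i + n - m \;\leftrightarrow\; \dim S = m - i + 1$ and checking that coordinate inclusion preserves Rayleigh quotients. Once Courant--Fischer is in hand, each bound is a one-line consequence of restricting the outer max or outer min to those subspaces that happen to lie in $V$.
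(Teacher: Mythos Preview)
Your argument is correct and is exactly the standard Courant--Fischer proof of Cauchy interlacing; the paper itself does not prove this theorem but simply quotes it from \cite{Horn_Johnson_2013}, where the same min--max argument appears. There is nothing to add.
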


For a vector $x \in \mathbb{R}^n$, let $x^\downarrow$ denote the vector obtained by rearranging the entries of $x$ in the non-increasing order. Given two vectors $x, y \in \mathbb{R}^n $, we say that  $x$ is \textit{weakly majorized} by $y$, denoted by $x \prec_w y$, if
\begin{gather*}
    \sum_{j=1}^{k} {x_j}^\downarrow \leq \sum_{j=1}^{k} {y_j}^\downarrow
\end{gather*}
for all $1 \leq k \leq n.$
If $x \prec_w y \ \text{and}  \ \sum_{i=1}^{n} x_i^\downarrow = \sum_{i=1}^{n} y_i^\downarrow,$
then we say that $x$ is \textit{majorized} by $y$ and denoted by $x \prec y$.

\begin{theorem}[\cite{Lin_Ning_Wu_2021_trianglefree}]
\label{thm:majorization}
    Let $x = (x_1,x_2,\dots,x_n),\ y = (y_1,y_2,\dots,y_n) \in \mathbb{R}^n_{\geq 0}$, such that $x_i$ and $y_i$ are in non-increasing order. If $y \prec_w x$, then 
    \[\Vert y\Vert_p \leq \Vert x\Vert_p\]
    for any real number $p> 1$, and equality holds if and only if $x=y$.
\end{theorem}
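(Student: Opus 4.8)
The plan is to recognize Theorem~\ref{thm:majorization} as an instance of the classical Karamata / Hardy--Littlewood--P\'olya principle: if $y \prec_w x$ then $\sum_i \phi(y_i) \le \sum_i \phi(x_i)$ for every nondecreasing convex function $\phi \colon [0,\infty) \to \mathbb{R}$. Applying this with $\phi(t) = t^p$ immediately gives $\Vert y\Vert_p^p \le \Vert x\Vert_p^p$, and taking $p$-th roots yields the inequality. The key features we use are that, for $p > 1$, this $\phi$ is nondecreasing (indeed $\phi'(t) = p t^{p-1} \ge 0$), is $C^1$ on $[0,\infty)$ with $\phi'(0) = 0$, and is \emph{strictly} convex there since $\phi'$ is strictly increasing; strict convexity is what drives the equality case.

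Rather than quoting the principle, I would give the short direct proof. Using the tangent-line (subgradient) inequality for the convex function $\phi$, for every index $i$ we have
\[\phi(x_i) - \phi(y_i) \ \ge\ \phi'(y_i)\,(x_i - y_i).\]
Set $b_i := x_i - y_i$ and $B_k := \sum_{j=1}^k b_j = \sum_{j=1}^k (x_j - y_j)$; since $x$ and $y$ are already arranged in non-increasing order, the hypothesis $y \prec_w x$ says precisely that $B_k \ge 0$ for all $k$. Summing the displayed inequality over $i$ and applying Abel summation,
\[\sum_{i=1}^n \phi'(y_i)\,b_i \ =\ \phi'(y_n)\,B_n \ +\ \sum_{i=1}^{n-1}\bigl(\phi'(y_i) - \phi'(y_{i+1})\bigr)\,B_i.\]
Because $y_1 \ge \cdots \ge y_n \ge 0$ and $\phi'$ is nonnegative and nondecreasing, every factor on the right-hand side is nonnegative, so the right-hand side is $\ge 0$; hence $\sum_i \phi(x_i) \ge \sum_i \phi(y_i)$, i.e.\ $\Vert x\Vert_p \ge \Vert y\Vert_p$.

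For the equality characterization, suppose $\Vert y\Vert_p = \Vert x\Vert_p$, equivalently $\sum_i \phi(x_i) = \sum_i \phi(y_i)$. Then equality must hold in every step above; in particular $\phi(x_i) - \phi(y_i) = \phi'(y_i)(x_i - y_i)$ for each $i$. Since $\phi$ is strictly convex on $[0,\infty)$, the tangent-line inequality is strict whenever $x_i \ne y_i$, so $x_i = y_i$ for all $i$, that is, $x = y$; the reverse implication is trivial.

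I do not anticipate a genuine difficulty here---this is a standard majorization argument---so the only points requiring care are bookkeeping ones: the one-sided differentiability and strict convexity of $t \mapsto t^p$ at the endpoint $t = 0$ (which is precisely where the restriction $p > 1$, as opposed to $p \ge 1$, is needed, both for the strict inequality in the equality case and for $\phi \in C^1$), and matching the orientation of the weak-majorization hypothesis to the Abel summation, namely that it is the partial sums of $x - y$ (not of $y - x$) that must be nonnegative.
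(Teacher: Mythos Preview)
Your proof is correct: it is the standard Karamata/Hardy--Littlewood--P\'olya argument via the tangent-line inequality and Abel summation, and the strict-convexity step cleanly handles the equality case. Note, however, that the paper does not actually prove Theorem~\ref{thm:majorization}; it is stated in the preliminaries and attributed to \cite{Lin_Ning_Wu_2021_trianglefree}, so there is no in-paper proof to compare against. Your argument supplies exactly the kind of short self-contained justification one would expect for this cited result.
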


The next two lemmas are useful in the proof for graphs with two positive eigenvalues. 

\begin{lemma}\label{lemma:inertia} Let $u$ and $v$ be two vertices in a graph $G$ such that $N(u)=N(v)$. Then $n^+(G)=n^+(G-v)$ and $n^-(G)=n^-(G-v)$.
\end{lemma}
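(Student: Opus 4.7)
The plan is to exhibit an invertible matrix $P$ such that $P^TA(G)P$ is the direct sum of $A(G-v)$ with a $1\times 1$ zero block, and then invoke Sylvester's law of inertia.

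First I would observe that the hypothesis $N(u)=N(v)$ forces $u\nsim v$: otherwise $u\in N(v)$ would give $u\in N(u)$, contrary to the convention that $N(u)$ is the open neighborhood. Hence the columns (and, by symmetry of $A$, the rows) indexed by $u$ and $v$ in $A=A(G)$ are identical; equivalently, $A(e_v-e_u)=0$.

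Next, I would take $P=I-E_{uv}$, where $E_{uv}$ is the elementary matrix with a single $1$ in position $(u,v)$; this $P$ is invertible. A direct computation shows that column $v$ of $AP$ equals $A(e_v-e_u)=0$ while every other column of $AP$ coincides with the corresponding column of $A$. Left-multiplying by $P^T$ then replaces row $v$ by (row $v$ of $AP$) minus (row $u$ of $AP$) and leaves all other rows unchanged. Since rows $u$ and $v$ of $AP$ are equal (they agreed in $A$ and we only zeroed out the $v$-th entry of each), row $v$ of $P^TAP$ vanishes; by symmetry, column $v$ does too. The remaining entries of $P^TAP$ coincide with $A_{i,j}$ for $i,j\neq v$. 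Reordering so that $v$ is last,
\[P^TAP=\begin{pmatrix} A(G-v) & 0 \\ 0 & 0 \end{pmatrix}.\]

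Finally, Sylvester's law of inertia guarantees that congruent symmetric matrices have the same signature, so $A$ and $P^TAP$ have the same numbers of positive and negative eigenvalues; the displayed block form shows that these counts agree with those of $A(G-v)$, the trailing zero block contributing only to the kernel. This yields $n^+(G)=n^+(G-v)$ and $n^-(G)=n^-(G-v)$. I do not foresee a significant obstacle here; the only subtlety is to work with a congruence rather than a similarity, since we need only preserve signature, not the full spectrum.
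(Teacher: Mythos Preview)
Your argument is correct. The paper states this lemma in the Preliminaries section without proof, treating it as a standard fact; your congruence-and-Sylvester approach is exactly the usual justification, and all steps check out (in particular, the observation that $N(u)=N(v)$ forces $u\nsim v$, so the $u$- and $v$-columns of $A(G)$ genuinely coincide).
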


\begin{lemma}[\cite{Aida_2023}]\label{lemma:two_positive_eigs}
    Let $G$ be a connected graph of order $n$ and $m$ edges with exactly two positive eigenvalues. Then $s^+(G) \ge m$.
\end{lemma}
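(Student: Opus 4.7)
The plan is to reduce the claim $s^+(G)\ge m$ to showing $s^+(G)\ge s^-(G)$ and then to prove the latter via the weak majorization tool, Theorem \ref{thm:majorization}, using only two universal constraints: the trace of $A(G)$ vanishes, and the Perron eigenvalue $\lambda_1$ dominates every $|\lambda_i|$.

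First I would note that since $A(G)$ is traceless, $\tr(A(G)^2)=2m$, so $s^+(G)+s^-(G)=2m$. Therefore $s^+(G)\ge m$ is equivalent to $s^+(G)\ge s^-(G)$. Because $n^+(G)=2$, the positive part of the spectrum is $(\lambda_1,\lambda_2)$ with $\lambda_1\ge\lambda_2>0$, and $\lambda_i\le 0$ for $i\ge 3$. The trace identity $\sum_i\lambda_i=0$ then becomes
\[
\lambda_1+\lambda_2 \;=\; \sum_{i\ge 3}|\lambda_i|.
\]
Since $G$ is connected, Perron--Frobenius gives $|\lambda_i|\le \lambda_1$ for every $i$.

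Next I would set up the majorization. Let $x=(\lambda_1,\lambda_2,0,\ldots,0)\in\mathbb{R}^n$ and let $y\in\mathbb{R}^n$ be the vector whose entries are $|\lambda_3|,|\lambda_4|,\ldots,|\lambda_n|$ (padded with two zeros), sorted in non-increasing order. The claim $y\prec_w x$ is checked at each prefix: for $k=1$, $y_1^{\downarrow}=\max_{i\ge 3}|\lambda_i|\le \lambda_1=x_1^{\downarrow}$ by Perron--Frobenius; for $k=2$, $y_1^{\downarrow}+y_2^{\downarrow}\le \sum_{i\ge 3}|\lambda_i|=\lambda_1+\lambda_2=x_1^{\downarrow}+x_2^{\downarrow}$; and for $k\ge 3$ the right-hand side already saturates at $\lambda_1+\lambda_2$, which by the trace identity upper bounds $\sum_{j=1}^k y_j^{\downarrow}$. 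Applying Theorem \ref{thm:majorization} with $p=2$ gives
\[
s^-(G)=\|y\|_2^2 \;\le\; \|x\|_2^2 = \lambda_1^2+\lambda_2^2 = s^+(G),
\]
and combined with $s^+(G)+s^-(G)=2m$ this yields $s^+(G)\ge m$, as required.

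I do not foresee a serious obstacle here: the argument is purely a two-line majorization once the correct vectors are chosen. The only subtlety is verifying the $k=1$ inequality, which is precisely where the Perron--Frobenius bound $|\lambda_n|\le\lambda_1$ (and hence connectedness of $G$) is used; without it, a very negative $\lambda_n$ could make the first prefix fail. Equality throughout occurs exactly when the nonzero $|\lambda_i|$ for $i\ge 3$ are $\lambda_1$ and $\lambda_2$, reflecting the bipartite extremal case such as $P_4$ noted in the excerpt.
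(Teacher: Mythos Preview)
Your argument is correct. The reduction to $s^+(G)\ge s^-(G)$ via $\tr(A(G)^2)=2m$ is immediate, and the weak majorization $y\prec_w x$ is verified exactly as you wrote: the $k=1$ prefix uses the Perron--Frobenius bound $|\lambda_i|\le\lambda_1$ (this is where connectedness enters), and for $k\ge 2$ the trace identity $\lambda_1+\lambda_2=\sum_{i\ge 3}|\lambda_i|$ does the rest. Theorem~\ref{thm:majorization} with $p=2$ then gives $s^-(G)\le s^+(G)$.

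As for comparison with the paper: the paper does not actually prove Lemma~\ref{lemma:two_positive_eigs}; it is quoted from \cite{Aida_2023} without argument. So there is no in-paper proof to compare against. Your approach is self-contained using only the majorization tool already stated in the preliminaries, which is perfectly in the spirit of the paper. One minor remark: your closing sentence about the equality case (``the nonzero $|\lambda_i|$ for $i\ge 3$ are $\lambda_1$ and $\lambda_2$'') is slightly loose---equality in Theorem~\ref{thm:majorization} requires $x=y$, which forces $n^-\le 2$ and the negative eigenvalues to match $\{\lambda_1,\lambda_2\}$ as a multiset---but this does not affect the main inequality you were asked to prove.
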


We need the following max-min result for the sum of the first two eigenvalues. 

\begin{lemma}[\cite{Mohar_2008}]\label{lemma:spectral_sum}
For any graph $G$, we have
    \[\lambda_1(G) + \lambda_2(G) = \max_{\substack{||x||=||y||=1,\\ x^Ty = 0}} x^TA(G)x + y^TA(G)y.\]
\end{lemma}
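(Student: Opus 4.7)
The plan is to establish this identity as the $k=2$ case of the Ky Fan maximum principle for Hermitian matrices. Write $A = A(G)$ and fix a spectral decomposition $A = \sum_{i=1}^n \lambda_i(G)\, v_i v_i^T$, where $v_1,\ldots,v_n$ is an orthonormal basis of eigenvectors with $Av_i = \lambda_i(G)\, v_i$. I will prove the two inequalities separately.

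The $\ge$ direction is immediate: take $x = v_1$ and $y = v_2$. These are unit vectors with $x^T y = 0$, and $x^T A x + y^T A y = \lambda_1(G) + \lambda_2(G)$, so the right-hand side is at least $\lambda_1(G) + \lambda_2(G)$.

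For the $\le$ direction, let $(x,y)$ be any feasible pair and expand $x = \sum_i a_i v_i$, $y = \sum_i b_i v_i$. The conditions $\|x\|=\|y\|=1$ and $x^T y = 0$ translate to $\sum_i a_i^2 = \sum_i b_i^2 = 1$ and $\sum_i a_i b_i = 0$. Setting $c_i := a_i^2 + b_i^2$, we obtain $x^T A x + y^T A y = \sum_i \lambda_i(G)\, c_i$ with $\sum_i c_i = 2$. The key structural observation is that $0 \le c_i \le 1$ for every $i$. Indeed, let $M$ be the $2\times n$ matrix whose rows are $(a_1,\ldots,a_n)$ and $(b_1,\ldots,b_n)$; then $MM^T = I_2$, so all singular values of $M$ equal $1$, and hence $c_i = \|Me_i\|^2 \le \|e_i\|^2 = 1$, where $e_i$ is the $i$-th standard basis vector.

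With the constraints $c_i \in [0,1]$, $\sum_i c_i = 2$, and $\lambda_1(G) \ge \lambda_2(G) \ge \cdots \ge \lambda_n(G)$, a direct rearrangement argument (push the mass onto the largest $\lambda_i$'s) yields $\sum_i \lambda_i(G)\, c_i \le \lambda_1(G) + \lambda_2(G)$. Equivalently, Schur's theorem tells us that the diagonal $(c_1,\ldots,c_n)$ of the positive semidefinite matrix $M^T M$ is majorized by its eigenvalues $(1,1,0,\ldots,0)$, and then the standard weighted-majorization inequality gives the same conclusion. The proof is short and I do not anticipate any real obstacle; the only subtle point is the uniform bound $c_i \le 1$, which rests precisely on the orthonormality of $x$ and $y$ (without the orthogonality constraint, the bound would fail and one would only recover $2\lambda_1(G)$).
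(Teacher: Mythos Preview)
Your proof is correct. The paper does not supply its own proof of this lemma; it is quoted as a known result from \cite{Mohar_2008}, so there is nothing to compare against beyond noting that your argument is the standard derivation of the $k=2$ Ky Fan maximum principle. The only delicate step, the bound $c_i \le 1$, is justified cleanly via the operator-norm observation $MM^T = I_2 \Rightarrow \|M\|_{\mathrm{op}} = 1$, and your rearrangement (or, equivalently, the Schur--majorization formulation) then gives the $\le$ direction without issue.
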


The matching number of $G$, denoted by $\alpha'(G)$, is the size of a maximum matching in graph $G$. The following is a well-known result which implies that the number of positive eigenvalues of a tree equals its matching number. 

\begin{theorem}[\cite{Cvetkovic_Gutman_1972}]\label{thm:tree_matching}
Let $T$ be a tree of order $n$ and matching number $\alpha'(T)$. Then the rank of the adjacency matrix of $T$ is $2\alpha'(T)$.
\end{theorem}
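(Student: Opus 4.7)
The plan is to strengthen the statement to forests (the rank is additive over connected components and so is the matching number, so the forest version implies the tree version) and then induct on the order $n$. The base case $n\le 1$ is trivial, as both sides vanish. For the inductive step, if $F$ has no edges we are done, so assume there is a leaf $v$ with unique neighbor $u$, and consider the forest $F' = F - \{u,v\}$.

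The first key claim is that $\alpha'(F) = \alpha'(F') + 1$. For the inequality $\alpha'(F) \ge \alpha'(F')+1$, append the edge $uv$ to any maximum matching of $F'$. For the reverse inequality, I would show that some maximum matching of $F$ contains $uv$: given a maximum matching $M$ of $F$ that avoids $uv$, the vertex $u$ must be matched (otherwise $M\cup\{uv\}$ is larger), say to $u'$; swapping $uu'$ for $uv$ produces a maximum matching containing $uv$, and deleting this edge yields a matching of $F'$ of size $\alpha'(F)-1$.

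The second key claim is $\operatorname{rank}(A(F)) = \operatorname{rank}(A(F')) + 2$. Order the vertices so that $v,u$ come first, and let $\mathbf{b}\in\{0,1\}^{n-2}$ be the indicator vector of $N(u)\setminus\{v\}$. Then
\[
A(F) = \begin{pmatrix} 0 & 1 & \mathbf{0}^T \\ 1 & 0 & \mathbf{b}^T \\ \mathbf{0} & \mathbf{b} & A(F') \end{pmatrix}.
\]
For each $w \in V(F')$ with $b_w = 1$, subtract the first row from row $w$ (which only has a $1$ in the $u$-column); this zeroes out the $\mathbf{b}$ block in the $u$-column without disturbing the $A(F')$ block. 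Applying the symmetric column operations zeroes out the $\mathbf{b}^T$ block as well, leaving a block-diagonal matrix with a $2\times 2$ block $\left(\begin{smallmatrix}0&1\\1&0\end{smallmatrix}\right)$ of rank $2$ and the block $A(F')$. Combining the two claims with the inductive hypothesis gives $\operatorname{rank}(A(F)) = \operatorname{rank}(A(F'))+2 = 2\alpha'(F')+2 = 2\alpha'(F)$.

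The proof is essentially routine once the right reduction is identified; the only subtle point is the matching claim, where one must argue that the leaf edge $uv$ can be forced into some maximum matching. There is no genuine obstacle, since elementary row/column operations cleanly decouple the leaf pair from the rest of the forest.
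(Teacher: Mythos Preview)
Your argument is correct. The generalization to forests is harmless (and convenient for the induction), the matching claim is handled properly via the standard swap argument showing the leaf edge $uv$ can be forced into a maximum matching, and the row/column reduction on $A(F)$ is valid: the first row is $(0,1,\mathbf{0}^T)$, so subtracting it from each row indexed by a neighbour of $u$ kills the $\mathbf{b}$ block while leaving the $A(F')$ block untouched, and the symmetric column operations then yield the desired block-diagonal form.

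As for comparison with the paper: there is nothing to compare. Theorem~\ref{thm:tree_matching} appears in Section~\ref{section:prelims} as a cited preliminary result from \cite{Cvetkovic_Gutman_1972}, stated without proof. Your proof is the standard one (leaf-stripping combined with the elementary observation that a pendant edge contributes a rank-$2$ direct summand), and would be perfectly acceptable as a self-contained justification had the paper chosen to include one.
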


The \emph{join} of two disjoint graphs $G_1$ and $G_2$ is obtained by adding all edges between the vertices of $G_1$ and $G_2$, and is denoted by $G_1\vee G_2$. We recall the spectrum for the join of two regular graphs.

\begin{theorem}[\cite{barik_op_spec}]\label{thm:join_spectrum}
Let $G_1$ be an $r_1$-regular graph on $n_1$ vertices and $G_2$ be an $r_2$-regular graph on $n_2$ vertices. Then the spectrum of $G_1\lor G_2$ is given by
\[\bigg\{\lambda_i(G_1), \lambda_j(G_2), \frac{r_1+r_2 \pm \sqrt{(r_1 - r_2)^2+4n_1n_2}}{2} : 2\le i \le n_1, 2\le j\le n_2\bigg\}.\]
\end{theorem}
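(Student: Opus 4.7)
The plan is to exploit regularity by writing $A(G_1 \vee G_2)$ in block form and decomposing $\mathbb{R}^{n_1+n_2}$ into three $A$-invariant subspaces, namely the ``orthogonal complements of the all-ones vectors'' inside each side, and a remaining $2$-dimensional subspace. Explicitly, the adjacency matrix of the join is
\[
A(G_1 \vee G_2) \;=\; \begin{pmatrix} A(G_1) & J_{n_1 \times n_2} \\ J_{n_2 \times n_1} & A(G_2) \end{pmatrix},
\]
where $J$ denotes the all-ones matrix. Since $G_i$ is $r_i$-regular, $\mathbf{1}_{n_i}$ is an eigenvector of $A(G_i)$ with eigenvalue $r_i = \lambda_1(G_i)$, and the orthogonal complement $U_i = \mathbf{1}_{n_i}^\perp$ is $A(G_i)$-invariant.

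The first step is to observe that for any $v \in U_1$, the padded vector $(v, 0) \in \mathbb{R}^{n_1+n_2}$ satisfies $A(G_1 \vee G_2)(v,0) = (A(G_1)v,\, J_{n_2 \times n_1} v)$, and $J_{n_2 \times n_1} v = 0$ because the row sums of $v$ vanish. Hence the eigenvectors of $A(G_1)$ associated with $\lambda_2(G_1), \dots, \lambda_{n_1}(G_1)$ lift to eigenvectors of $A(G_1 \vee G_2)$ with the same eigenvalues. Symmetrically, the eigenvalues $\lambda_2(G_2), \dots, \lambda_{n_2}(G_2)$ also appear in the spectrum of the join.

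The second step handles the remaining two eigenvalues, which must come from the $2$-dimensional invariant subspace $W = \operatorname{span}\{(\mathbf{1}_{n_1},0),\,(0,\mathbf{1}_{n_2})\}$. Applying $A(G_1 \vee G_2)$ to a vector $(a \mathbf{1}_{n_1}, b \mathbf{1}_{n_2})$ yields
\[
\bigl((a r_1 + b n_2)\mathbf{1}_{n_1},\; (a n_1 + b r_2)\mathbf{1}_{n_2}\bigr),
\]
so in the basis $\{(\mathbf{1}_{n_1},0),(0,\mathbf{1}_{n_2})\}$ the restriction is represented by
\[
M = \begin{pmatrix} r_1 & n_2 \\ n_1 & r_2 \end{pmatrix}.
\]
Computing $\det(M - \lambda I) = \lambda^2 - (r_1+r_2)\lambda + (r_1 r_2 - n_1 n_2)$ and solving the quadratic yields the two remaining eigenvalues
\[
\frac{r_1+r_2 \pm \sqrt{(r_1-r_2)^2 + 4 n_1 n_2}}{2}.
\]
Combining the three contributions accounts for all $n_1+n_2$ eigenvalues, which yields the claimed spectrum. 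There is no real obstacle here; the only thing to check is that $M$ and its transpose share spectra (so the non-orthonormality of the basis of $W$ is harmless for eigenvalue extraction), which is immediate.
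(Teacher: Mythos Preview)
Your proof is correct and is the standard argument for this well-known result. Note, however, that the paper does not actually prove this theorem: it is stated in the Preliminaries section as a cited result from \cite{barik_op_spec}, without proof. So there is no ``paper's own proof'' to compare against; your argument simply supplies the (routine) verification that the paper omits.
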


Finally, we observe an improved version of the $P_3$-removal lemma (see \cite[Theorem 1.10]{Zhang_2024_Extremal}). 

\begin{lemma}[$P_3$-removal lemma \cite{Zhang_2024_Extremal}] \label{lemma:P3_lemma}
Let $G$ be any graph and $\epsilon = \frac{1}{16}$. Suppose $U$ is a set of three vertices in $G$ such that $G[U]\cong P_3$. Then there exists a vertex $u \in U$ such that 
\[s^+(G) \geq s^+(G-u) + 1 + \epsilon.\]
The same also holds if $s^+$ is replaced with $s^-$.
\end{lemma}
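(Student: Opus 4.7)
My plan is to revisit the proof of the original $P_3$-removal lemma in \cite{Zhang_2024_Extremal}, which establishes $s^+(G) \geq s^+(G-u) + 1$ for some $u \in U$, and to extract an explicit quantitative improvement from its slack.

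Write $U = \{u_1, u_2, u_3\}$ with $u_2$ the middle vertex of the induced $P_3$. The baseline argument compares $s^+(G)$ with $s^+(G-u)$ vertex by vertex, via the Cauchy interlacing $\lambda_{i+1}(G) \le \lambda_i(G-u) \le \lambda_i(G)$ together with the trace identity $\sum_i \lambda_i(G)^2 - \sum_i \lambda_i(G-u)^2 = 2\,d_G(u)$, and distributes the contribution of the $3\times 3$ principal submatrix $A(G)[U]$ (whose spectrum $\{\sqrt{2},0,-\sqrt{2}\}$ has $s^+ = 2$) across the three vertex choices. The key structural input is that $A(G)[U]$ is fixed, so $\sum_{u\in U} \bigl(s^+(G) - s^+(G-u)\bigr)$ collects a definite amount of positive square energy from $U$ regardless of how $U$ sits in $G$.

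The refinement has two steps. First, I would sum the three baseline inequalities to obtain
\[\sum_{i=1}^{3}\bigl(s^+(G)-s^+(G-u_i)\bigr) \ge 3 + R,\]
where the residual $R \ge 0$ is expressed in terms of the off-diagonal entries of the $(U, V(G)\setminus U)$-block of $A(G)$ and the diagonal entries of the positive-eigenspace projection $P^+$ at the three vertices of $U$. Second, I would lower bound $R \ge \tfrac{3}{16}$ by a short case analysis on how the $0$-eigenvector $(1,0,-1)/\sqrt{2}$ of $A(G)[U]$ projects onto the positive vs.\ negative eigenspace of $A(G)$: either the projection onto $P^+$ is sizable, in which case an extra square-energy contribution of order $\tfrac{1}{16}$ is harvested, or it is small, in which case the complementary projection produces slack on the $s^-$ side that feeds back into $R$ through the identity $s^+(G) + s^-(G) = 2m$. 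Pigeonhole on the improved summed inequality then yields some $u_i$ with $s^+(G) - s^+(G-u_i) \ge 1 + \tfrac{1}{16}$.

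The main obstacle is nailing down the explicit constant $\tfrac{3}{16}$: the qualitative statement $R > 0$ is essentially built into the original proof, but sharpening it to a uniform quantitative bound requires a careful case split and an explicit orthogonality estimate that is insensitive to the ambient graph. The corresponding $s^-$ statement then follows by applying the argument verbatim to $-A(G)$; all inputs (interlacing, the trace identity, and the spectrum of the $P_3$ submatrix) are invariant under negation of the matrix, while $s^+$ and $s^-$ swap roles.
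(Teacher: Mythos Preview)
Your proposal is a sketch, not a proof, and the crucial quantitative step---bounding the residual $R\ge \tfrac{3}{16}$---is never carried out. You acknowledge this yourself (``the main obstacle is nailing down the explicit constant''), but you do not perform the case split or the orthogonality estimate; without them there is no argument. Moreover, your description of the ``baseline argument'' does not match how the original $P_3$-removal lemma in \cite{Zhang_2024_Extremal} is actually proved: that proof does not proceed via Cauchy interlacing plus the trace identity, but via the variational formula $s^+(G)=\inf_{M\succ 0}\lVert A(G)+M\rVert_2^2$ and a careful choice of test matrices. So the framework you intend to ``revisit'' and improve is not the right one, and it is unclear that your summed inequality $\sum_{i}(s^+(G)-s^+(G-u_i))\ge 3+R$ with a usable expression for $R$ can be extracted from it at all.

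The paper's proof takes a completely different and much shorter route: it literally re-runs Zhang's original argument with $1$ replaced by $1+\epsilon$ at the relevant inequalities, and observes that the whole chain still goes through provided a single explicit polynomial inequality,
\[
16x^4 > 6\bigl(1+\epsilon-4(1-x)^2\bigr)\bigl(1+\epsilon-2(1-x)^2\bigr)
\quad\text{for all } x\in\Bigl[\tfrac{1-\epsilon}{2},\,1+\epsilon\Bigr],
\]
holds; this is then verified numerically for $\epsilon=\tfrac{1}{16}$. There is no eigenspace projection analysis, no summation over the three vertices, and no case split on how the $0$-eigenvector of $A(G)[U]$ sits inside $G$. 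If you want to salvage your approach you would need to (i) replace the interlacing/trace skeleton by the actual PSD-optimization skeleton of Zhang's proof, and (ii) produce the missing constant explicitly---but at that point you are essentially redoing the paper's calculation in a more roundabout way.
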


\begin{proof}
We follow the proof in \cite[Section 4]{Zhang_2024_Extremal}, replacing $1$ with $1 + \epsilon$ at appropriate places. Our proof boils down to verifying that the inequality
\[16 x^4 > 6\left(1 + \epsilon - 4(1 - x)^2\right) \left(1 + \epsilon - 2(1 - x)^2\right)\]
holds for all $x \in \left[\frac{1 - \epsilon}{2}, 1 + \epsilon\right]$. A direct computation via Desmos shows that this holds for $\epsilon = \frac{1}{16}$.
\end{proof}

\section{Refining super-additivity of square energy}

The following super-additivity result was proved in \cite{Akbari_2024_Linear, Zhang_2024_Extremal} and was used to obtain linear lower bounds for the square energy of graphs.

\begin{theorem}[\cite{Akbari_2024_Linear, Zhang_2024_Extremal}]\label{thm:square_energy_partition}
Let $H_1, \ldots, H_k$ be disjoint induced subgraphs of a graph $G$. Then 
\[s^+(G)\geq \sum_{i=1}^{k} s^+(H_i)\ \text{ and }\  s^-(G)\geq \sum_{i=1}^{k} s^-(H_i),\]
and equality holds in both simultaneously if and only if $G$ is the disjoint union of $H_1, \ldots, H_k$. 
\end{theorem}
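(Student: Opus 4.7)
The plan is to prove both inequalities by a single test-matrix argument based on the variational characterization
\[\sqrt{s^+(A)} \ =\ \|A^+\|_F \ =\ \sup\bigl\{\tr(AX) : X \succeq 0,\ \|X\|_F \le 1\bigr\},\]
which follows from the Jordan decomposition $A=A^+-A^-$ into positive and negative semidefinite parts together with Cauchy--Schwarz in the Frobenius inner product, after noting that $\tr(A^-X)\ge 0$ whenever $X$ is positive semidefinite. Replacing $A$ by $-A$ gives the analogous formula for $s^-$.

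For the $s^+$ bound, I would take as a test matrix $X := \bigoplus_{i=1}^k A(H_i)^+$, viewed as an $n\times n$ matrix by padding with zeros outside $V(H_1)\cup\cdots\cup V(H_k)$. Since the nonzero blocks live on disjoint coordinate sets, Pythagoras gives $\|X\|_F^2 = \sum_i \|A(H_i)^+\|_F^2 = \sum_i s^+(H_i)$. Because each $H_i$ is an induced subgraph and $X$ is supported on $V(H_i)$ in its $i$th block, a direct computation also yields $\tr(A(G)X) = \sum_i \tr(A(H_i)\cdot A(H_i)^+) = \sum_i s^+(H_i)$. Plugging into the variational formula,
\[\sqrt{s^+(G)} \ \ge\ \frac{\tr(A(G)X)}{\|X\|_F} \ =\ \Bigl(\sum_{i=1}^k s^+(H_i)\Bigr)^{1/2},\]
which is the desired bound. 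The proof of $s^-(G)\ge \sum_i s^-(H_i)$ is identical after replacing every $^+$ with $^-$ throughout.

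For the equality case, the ``if'' direction is immediate since the disjoint union makes $A(G)$ block diagonal with blocks $A(H_i)$. Conversely, if both inequalities are tight, then adding them gives $\tr(A(G)^2)=\sum_i\tr(A(H_i)^2)$, i.e., $2m(G)=2\sum_i m(H_i)$, forcing every edge of $G$ to lie inside some $H_i$; in particular there are no cross-edges between distinct $H_i$'s, nor edges touching $V(G)\setminus\bigcup_i V(H_i)$. Combined with the Cauchy--Schwarz tightness conditions, which force $A(G)^+$ and $A(G)^-$ to be proportional to the corresponding block-diagonal test matrices and therefore supported on $\bigcup_i V(H_i)$, one concludes that $G$ is exactly the disjoint union of the $H_i$'s. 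The main delicate point I expect is this equality analysis; the inequality itself is a one-line Cauchy--Schwarz calculation once the test matrix has been identified.
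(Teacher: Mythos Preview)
The paper does not actually prove this theorem: it is quoted from \cite{Akbari_2024_Linear, Zhang_2024_Extremal} and used as a black box, so there is no in-paper proof to compare against. That said, your argument is correct and clean. The variational formula $\sqrt{s^+(A)}=\sup\{\tr(AX):X\succeq 0,\ \|X\|_F\le 1\}$ is exactly right, and your block-diagonal test matrix $X=\bigoplus_i A(H_i)^+$ does the job; the computation $\tr(A(G)X)=\sum_i s^+(H_i)$ uses precisely that the $H_i$ are induced and pairwise disjoint. One trivial edge case you should mention is $\sum_i s^+(H_i)=0$, where $X=0$ and the inequality is vacuous.

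It is worth noting that the paper's own toolbox contains a closely related but \emph{dual} characterization, Lemma~\ref{lem:pythagorean}: $s^+(G)=\inf_{M\succeq 0}\|A(G)+M\|_2^2$. From that formula the inequality also drops out in one line: for any PSD $M$ one has $\|A(G)+M\|_2^2\ge \sum_i \|A(H_i)+M|_{V(H_i)}\|_2^2\ge \sum_i s^+(H_i)$, and taking the infimum over $M$ finishes. Your sup-based approach and the paper's inf-based approach are two sides of the same semidefinite duality; yours has the minor advantage that the test matrix is explicit, while the inf version is what the paper actually builds on when proving the Gluing Lemma~\ref{lem:gluing}.

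Your equality analysis via $2m(G)=\sum_i 2m(H_i)$ is the right idea and already shows that every edge of $G$ lies inside some $H_i$; the extra Cauchy--Schwarz tightness discussion is redundant for this purpose. One small caveat: the edge-count argument only forces vertices outside $\bigcup_i V(H_i)$ to be isolated, not absent, so the ``only if'' direction as literally stated requires the tacit convention that isolated vertices are ignored (or that the $H_i$ cover $V(G)$). This is a wrinkle in the statement rather than in your proof.
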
 

We will use the above result repeatedly to prove optimal lower bounds for different graph families in this paper. Unfortunately, it cannot be used to prove Conjecture \ref{conj:square_main} for unicyclic graphs. 

In this section, we prove a new tool, namely the Gluing lemma, which can be viewed as a refinement of the super-additivity result. As an application of the gluing lemma, we will prove Conjecture \ref{conj:square_strong} for claw-free unicyclic graphs (Theorem \ref{thm:triangle_with_paths}) in Section \ref{section:claw-free}.

We first introduce some notation. If a matrix $M$ is positive semidefinite matrix (PSD), we denote it by $M \succ 0$. For two real symmetric matrices $A, B$ of the same dimension, we define the inner product $\langle A, B\rangle = \tr(AB)$ and the corresponding norm $\norm{A}_2  = \sqrt{\tr(A^2)}$. For a square matrix $M$ and a set of indices $I$, we let $M|_I$ denote the principal submatrix $(M_{i,j})_{i,j\in I}$ of $M$.

For a graph $G$, let $A^+(G)$ and $A^-(G)$ denote the positive semidefinite matrices in the spectral decomposition $A(G) = A^+(G) - A^-(G)$. The following lemma is a strengthening of an observation by Zhang \cite{Zhang_2024_Extremal}, which is already familiar in the convex optimization literature. For completeness, we include the proof here.

\begin{lemma}[\cite{Zhang_2024_Extremal}]\label{lem:pythagorean}
For any PSD matrix $M$, we have 
    \[\norm{A(G) + M}_2^2 \geq s^+(G) + \norm{A^-(G) - M}_2^2. \]
In particular, $$s^+(G) = \inf_{M \succ 0} \norm{A(G) + M}_2^2$$
with equality if $M = A^-(G)$.     
\end{lemma}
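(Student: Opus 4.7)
The plan is to exploit the spectral decomposition $A(G) = A^+(G) - A^-(G)$ in which $A^+(G)$ and $A^-(G)$ are PSD matrices with orthogonal ranges; concretely, if $\{v_i\}$ is an orthonormal eigenbasis, then $A^+(G) = \sum_{\lambda_i > 0} \lambda_i v_i v_i^T$ and $A^-(G) = -\sum_{\lambda_i<0} \lambda_i v_i v_iv^T$, so $A^+(G) A^-(G) = 0$ and thus $\langle A^+(G), A^-(G)\rangle = \tr(A^+(G) A^-(G)) = 0$. Also $\norm{A^+(G)}_2^2 = \sum_{\lambda_i>0}\lambda_i^2 = s^+(G)$.

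First I would rewrite
\[
A(G) + M \;=\; A^+(G) + \bigl(M - A^-(G)\bigr),
\]
and expand the squared Frobenius norm using the inner product $\langle X, Y\rangle = \tr(XY)$:
\[
\norm{A(G) + M}_2^2 \;=\; \norm{A^+(G)}_2^2 \;+\; 2\langle A^+(G),\, M - A^-(G)\rangle \;+\; \norm{M - A^-(G)}_2^2.
\]
The first term equals $s^+(G)$, and in the cross term the contribution from $A^-(G)$ vanishes by the orthogonality noted above, leaving $2\langle A^+(G), M\rangle$.

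Next I would appeal to the standard fact that the trace inner product of two PSD matrices is nonnegative: since $M \succeq 0$ and $A^+(G) \succeq 0$, we have $\langle A^+(G), M\rangle = \tr(A^+(G)\,M) \geq 0$. (One short way to see this: write $A^+(G) = B^T B$; then $\tr(A^+(G)M) = \tr(B M B^T) \geq 0$ because $B M B^T$ is PSD.) Combining everything yields the desired inequality
\[
\norm{A(G) + M}_2^2 \;\geq\; s^+(G) \;+\; \norm{A^-(G) - M}_2^2.
\]

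Finally, to obtain the infimum characterization, I note that the right-hand side above is always at least $s^+(G)$, with equality precisely when $M = A^-(G)$ and the cross term vanishes. Plugging $M = A^-(G)$ into the left-hand side indeed gives $\norm{A(G) + A^-(G)}_2^2 = \norm{A^+(G)}_2^2 = s^+(G)$, certifying that the infimum is attained. There is no real obstacle here; the only point that needs care is the orthogonality relation $\langle A^+(G), A^-(G)\rangle = 0$ coming from the spectral decomposition, which is what makes the cross term collapse cleanly.
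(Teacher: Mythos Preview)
Your proof is correct and follows essentially the same route as the paper: rewrite $A(G)+M = A^+(G) + (M - A^-(G))$, expand the Frobenius norm, use $\langle A^+(G), A^-(G)\rangle = 0$ and $\langle A^+(G), M\rangle \geq 0$ for PSD matrices, and then check that $M = A^-(G)$ attains the infimum. You have even included a bit more detail than the paper (the $B^TB$ justification and the explicit infimum discussion); the only blemish is the typo $v_i v_iv^T$ for $v_i v_i^T$.
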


\begin{proof}
  We have
\begin{align*}
\norm{A(G) + M}_2^2 & = \norm{A^+(G) + (M-A^-(G))}_2^2 \\
& = \norm{A^+(G)}_2^2 + \norm{M-A^-(G)}_2^2 + 2\langle A^+(G), M-A^-(G) \rangle\\
& \ge s^+(G) + \norm{M-A^-(G)}_2^2.
\end{align*}
The last inequality holds because $\langle A^+(G), A^-(G)\rangle =0$ and $\langle A^+(G), M\rangle \ge 0$ (this is because $A^+(G)$ and $M$ are PSD, see \cite[Theorem 7.5]{F_Zhang_Matrix_Theory_2011}).
\end{proof}

Now, we state and prove the main result of this section. We note here that $s^+$ of any real symmetric matrix is defined analogously to the $s^+$ of the adjacency matrix of a graph.

\begin{lemma}[Gluing lemma]\label{lem:gluing}
Let $G_0$ be a base graph, $\{u_1, \ldots, u_k\} \subseteq V(G_0)$, and $(G_1, v_1),\ldots,$ $(G_k, v_k)$ be $k$ graphs with distinguished vertices. Let $G$ be obtained by gluing graphs $G_1, \ldots, G_k$ onto $G_0$, identifying $u_i$ and $v_i$. Then we have
    $$s^+(G) \geq  \sum_{i = 1}^k s^+(G_i) + s^+(\Gamma),$$
where $\Gamma$ is the matrix obtained from $A(G_0)$, the adjacency matrix of $G_0$, and replacing the diagonal entry at $(u_i, u_i)$ with $-d_i$ where $d_i = (A^-(G_i))_{v_i, v_i}$ for each $i = 1,\dots, k$. 
    
In fact, the inequality can be strengthened as follows: for any PSD matrix $M$, we have
\[\norm{A(G) + M}_2^2\geq \sum_{i = 1}^k s^+(G_i) + s^+(\Gamma) + R_1(M) + R_2(M),\] 
where the terms $R_1$ and $R_2$ are non-negative quantities defined by
    $$R_1(M) = \sum_{i = 1}^k \sum_{\substack{u, u' \in V(G_i) \\ (u, u') \neq (v_i, v_i)}} \abs{(A^-(G))_{u, u'} - M_{u, u'}}^2,$$
    $$R_2(M) = 2 \sum_{i = 1}^k \sum_{u \in V(G_i) \backslash \{v_i\}, u' \in V(G_0) \backslash \{u_i\}} \abs{M_{u, u'}}^2.$$   
\end{lemma}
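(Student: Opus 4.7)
The plan is to apply Lemma~\ref{lem:pythagorean} twice --- once locally to each $A(G_i)$ and once to the modified base matrix $\Gamma$ --- and reconcile the two through careful bookkeeping. The shift $-d_i$ on the diagonal of $\Gamma$ will turn out to be precisely what is needed for the orphaned residuals to cancel.

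Partition the vertices as $V(G) = V_0 \sqcup \bigsqcup_{i=1}^k V_i'$, where $V_0 = V(G_0)$ and $V_i' = V(G_i) \setminus \{v_i\}$. Expand $\|A(G) + M\|_2^2$ as a sum over ordered vertex pairs grouped by their location: pairs inside $V(G_i)$ (including the shared vertex $v_i = u_i$) reconstitute the submatrix $A(G_i) + N_i$ with $N_i := M|_{V(G_i)}$; pairs inside $V_0$ reconstitute $A(G_0) + M_0$ with $M_0 := M|_{V_0}$; each shared $(v_i,v_i)$ diagonal entry is double-counted and must be subtracted once; and the remaining cross pairs (for which $A(G)$ vanishes) contribute exactly $R_2(M)$ together with a nonnegative $V_i' \times V_j'$ ($i \neq j$) sum. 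Apply Lemma~\ref{lem:pythagorean} to each $A(G_i) + N_i$ --- valid since $N_i$ is PSD as a principal submatrix of $M$ --- obtaining $s^+(G_i)$ plus a squared-norm residual on $V(G_i)$; the off-$(v_i,v_i)$ contributions assemble into (a piece of) $R_1(M)$, while the $(v_i,v_i)$ entry produces an orphaned term $(d_i - M_{u_i,u_i})^2$.

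Combining the $(v_i,v_i)$ term with the double-count correction yields a per-$i$ residue $d_i^2 - 2 d_i M_{u_i,u_i}$. To absorb this, write $A(G_0) = \Gamma + D$, where $D$ is diagonal with $(u_i,u_i)$-entry equal to $d_i$; using the identities $\langle \Gamma, D\rangle = -\sum_i d_i^2$ and $\|D\|_2^2 = \sum_i d_i^2$ one checks the crucial cancellation
\[ \|A(G_0) + M_0\|_2^2 + \sum_i d_i^2 - 2 \sum_i d_i M_{u_i,u_i} = \|\Gamma + M_0\|_2^2. \]
A final application of Lemma~\ref{lem:pythagorean} to $\Gamma$ (with the PSD matrix $M_0$) gives $\|\Gamma + M_0\|_2^2 \geq s^+(\Gamma)$, which assembles with the block contributions into the strengthened inequality; the base inequality $s^+(G) \geq \sum_i s^+(G_i) + s^+(\Gamma)$ follows at once since $R_1, R_2 \geq 0$ and $s^+(G) = \inf_{M \succeq 0} \|A(G)+M\|_2^2$. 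The main obstacle is not conceptual but algebraic: the diagonal shift $-d_i$ in $\Gamma$ is calibrated precisely so that the orphaned Pythagorean residuals telescope into the clean $\|\Gamma + M_0\|_2^2$ form; any other choice would leave uncancelled error terms.
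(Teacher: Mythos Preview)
Your proof is correct and follows essentially the same strategy as the paper: decompose $\|A(G)+M\|_2^2$ by blocks, apply Lemma~\ref{lem:pythagorean} to each $A(G_i)+M|_{V(G_i)}$, isolate the $(v_i,v_i)$ residual $(d_i - M_{u_i,u_i})^2$, and recognize the remaining $V_0$-contribution as $\|\Gamma+M_0\|_2^2$ before applying Lemma~\ref{lem:pythagorean} once more. The only cosmetic difference is bookkeeping: the paper excludes the $(u_i,u_i)$ entries from the $V_0$-sum at the outset, so that adding back $(d_i-M_{u_i,u_i})^2$ immediately gives $\|\Gamma+M_0\|_2^2$, whereas you keep the full $\|A(G_0)+M_0\|_2^2$, subtract the double-counted $M_{u_i,u_i}^2$, and then verify the identity $\|A(G_0)+M_0\|_2^2+\sum_i(d_i^2-2d_iM_{u_i,u_i})=\|\Gamma+M_0\|_2^2$ via $A(G_0)=\Gamma+D$. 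Your route is slightly longer but arguably more transparent about \emph{why} $-d_i$ is the right diagonal shift; you are also more careful than the paper in explicitly noting the nonnegative $V_i'\times V_j'$ cross terms, which the paper silently drops.
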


\begin{proof}
Let $M$ be any PSD matrix. We want to find a lower bound for $\norm{A(G) + M}_2^2$. Recall that
    \[\norm{A(G) + M}_2^2 = \sum_{u, u' \in V(G)} (A(G) +M)_{u, u'}^2.\]
As $V(G_0)$ and $V(G_i)$ overlap at exactly $u_i, v_i$, we have 
    \[\norm{A(G) + M}_2^2 = \sum_{i = 1}^k \sum_{u, u' \in V(G_i)} (A(G) +M)_{u, u'}^2 + \sum_{\substack{u, u' \in V(G_0) \\ (u, u') \neq (u_i, u_i), \forall i}} (A(G) +M)_{u, u'}^2 + R_2(M).\]
    Note that
    $$\sum_{u, u' \in V(G_i)} (A(G) +M)_{u, u'}^2 = \norm{A(G_i) + M|_{V(G_i)}}_2^2.$$
    Now, applying Lemma~\ref{lem:pythagorean}, we have
    $$\norm{A(G_i) + M|_{V(G_i)}}_2^2 \geq s^+(G_i) + \norm{A^-(G_i) - M|_{V(G_i)}}_2^2.$$
    Isolating the term corresponding to $(u_i, u_i)$, we have
    \[\norm{A^-({G_i}) - M|_{V(G_i)}}_2^2 \geq (d_i - M_{u_i, u_i})^2 + \sum_{\substack{u, u' \in V(G_i) \\ (u, u') \neq (v_i, v_i)}} \abs{(A^-(G))_{u, u'} - M_{u, u'}}^2.\]    
    So we conclude that
    \[\sum_{u, u' \in V(G_i)} (A(G) +M)_{u, u'}^2 \geq s^+(G_i) + (d_i - M_{u_i, u_i})^2 + \sum_{\substack{u, u' \in V(G_i) \\ (u, u') \neq (v_i, v_i)}} \abs{(A^-(G))_{u, u'} - M_{u, u'}}^2.\]
    Substituting back, we get
    \[\norm{A(G) + M}_2^2 \geq \sum_{i = 1}^k s^+(G_i) + (d_i - M_{u_i, u_i})^2 + \sum_{\substack{u, u' \in V(G_0) \\ (u, u') \neq (u_i, u_i), \forall i}} (A(G) +M)_{u, u'}^2 + R_1(M) + R_2(M).\]
    Now, we note that the sum of the second and the third term is precisely $\norm{\Gamma + M}_2^2$. So we get
    \[\norm{A(G) + M}_2^2 \geq \sum_{i = 1}^k s^+(G_i) + \norm{\Gamma + M}_2^2 + R_1(M) + R_2(M).\]
   We conclude that
    \[\norm{A(G) + M}_2^2 \geq \sum_{i = 1}^k s^+(G_i) + s^+(\Gamma) + R_1(M) + R_2(M). \qedhere\]\end{proof}

\begin{remark}
Since decreasing the diagonal entries does not increase the positive energy (as subtracting a PSD matrix does not increase the $i$-th eigenvalue for any $i$), the same result holds if we replace the diagonal entry of $\Gamma$ at $(u_i, u_i)$ with any entry smaller than $-d_i$. 
\end{remark}

The gluing lemma is a refinement of the super-additivity result in the following sense. Using Theorem \ref{thm:square_energy_partition}, we can only get  $s^+(G) \geq  \sum_{i = 1}^k s^+(G_i) + s^+(G_0 - \{u_1, \ldots, u_k\})$. Since $s^+(\Gamma)\ge s^+(G_0 - \{u_1, \ldots, u_k\})$, the gluing lemma gives an improvement. 

\section{Claw-free graphs}\label{section:claw-free}

In this section, we prove Theorem \ref{thm:claw_free_main}. We first define some special claw-free graphs. For $j,k,\ell \ge 0$, denote by $P(j,k,\ell)$ the graph obtained by attaching paths of length $j,k,\ell$ at distinct vertices of a triangle. Then $|V(P(j,k,\ell))|=3+j+k+\ell$. For $k\ge 4$ and $\ell\ge 0$, define $C(k,\ell)$ to be the graph obtained from a cycle $v_1, \ldots, v_k, v_1$ by joining the vertices $v_1, v_3$ and attaching a path of length $\ell$ at vertex $v_2$. Note that $|V(C(k,\ell))|=k+\ell$.

Let $G$ be a connected claw-free graph of order $n$ and $\Delta(G)\ge 3$. Then $G$ has a triangle. If $G$ is unicyclic, then it is not difficult to see that $G$ is isomorphic to $P(j,k,\ell)$ for some $j,k,\ell$. We first verify Theorem \ref{thm:claw_free_main} for the family $P(j,k,\ell)$. This case turns out to be very challenging.

\subsection{Triangle with attached paths $P(j,k,\ell)$}

In this section, we prove the following result.

\begin{theorem}\label{thm:triangle_with_paths}
Let $G=P(j,k,\ell)$ for some $j,k,\ell \ge 0$. Then $s^+(G)\ge j + k + \ell + 3 = n$.
\end{theorem}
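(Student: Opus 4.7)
The plan is to apply the Gluing Lemma (Lemma \ref{lem:gluing}) with $G_0 = K_3$ and $G_i = P_{j_i+1}$ (where $(j_1,j_2,j_3) = (j,k,\ell)$), each glued to a distinct triangle vertex via a path endpoint. Each $P_n$ is bipartite, so $s^+(P_n) = m(P_n) = n-1$, and the lemma gives
\[s^+(G) \ge j + k + \ell + s^+(\Gamma),\]
where $\Gamma$ is the $3\times 3$ matrix $\begin{pmatrix}-d_1 & 1 & 1 \\ 1 & -d_2 & 1 \\ 1 & 1 & -d_3\end{pmatrix}$ and $d_i = (A^-(P_{j_i+1}))_{1,1}$ is computed at the attachment endpoint. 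It then suffices to prove $s^+(\Gamma) \ge 3$.

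The first step is to compute $d_n$. Using the standard sine eigenbasis of $P_n$, one derives $d_n = \frac{2}{n+1}\sum_{\lambda_k < 0}|\lambda_k|\sin^2(k\pi/(n+1))$, giving $d_1 = 0$, $d_2 = \tfrac{1}{2}$, and $d_n < \tfrac{1}{2}$ for $n\ge 3$ (with $d_n \to 4/(3\pi) \approx 0.424$). Hence the maximum value $\tfrac{1}{2}$ is attained only when the attached path is a single pendant ($j_i = 1$). Next, I would analyze the spectrum of $\Gamma$: the Rayleigh quotient on the all-ones vector gives $\lambda_1(\Gamma) \ge 2 - p/3$ where $p = d_1 + d_2 + d_3$, so $s^+(\Gamma) \ge (2-p/3)^2 \ge 3$ whenever $p \le 3(2-\sqrt{3}) \approx 0.80$. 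This handles the configurations for which $\sum d_i$ is small, in particular all cases with at most one nontrivial attached path.

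The remaining \emph{corner} configurations (where $p$ is large because two or three $d_i$ are close to $\tfrac{1}{2}$) require separate arguments. For the small base cases $(j,k,\ell)\in\{(0,0,0), (1,0,0), (1,1,0), (1,1,1)\}$, direct computation of the spectra shows $s^+(G)\ge n$. For configurations with at least one longer path ($j_i \ge 2$), I would reapply the Gluing Lemma with an enlarged base such as $G_0 = P(j_{\max}, 0, 0)$ absorbing the longest path; the resulting matrix $\Gamma'$ is a rank-one diagonal perturbation of $A(G_0)$, which can be analyzed via Weyl's inequality together with the bound $s^+(P(j, 0, 0)) \ge j+3$ obtained from the first Gluing. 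The main obstacle is the delicate case analysis for configurations like $(2,1,1)$, where even the enlarged Gluing may fall slightly short; in such cases, one likely needs to invoke the full refined Gluing inequality $\|A(G)+M\|_2^2 \ge \sum_i s^+(G_i) + s^+(\Gamma) + R_1(M) + R_2(M)$ with a carefully chosen PSD matrix $M$ so that $R_1(M)+R_2(M)$ together make up the deficit.
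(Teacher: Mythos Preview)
Your general framework via the Gluing Lemma with $G_0 = K_3$ is where the paper starts too, but you have misidentified the hard case and underestimated its difficulty. The ``corner'' configurations with short paths such as $(2,1,1)$ are not the obstacle --- they are finitely many and dispatched by direct computation. The genuine difficulty is when \emph{all three paths are long}: then each $d_i \to 4/(3\pi) \approx 0.424$, your $3\times 3$ matrix $\Gamma$ has a single positive eigenvalue $2-d$, and $s^+(\Gamma) \approx (1.576)^2 \approx 2.48$, leaving a deficit of roughly $0.52$ that does \emph{not} vanish as $j,k,\ell\to\infty$. Enlarging the base to absorb the longest path does not help either: replacing diagonal entries of $A(G_0)$ by $-d_i$ amounts to subtracting a PSD matrix, which can only \emph{decrease} $s^+$, so $s^+(\Gamma') \le s^+(P(j_{\max},0,0))$ and you have no independent control on the right-hand side beyond the bound $\ge j_{\max}+3$ that you are trying to recover. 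The paper does enlarge the base symmetrically to six vertices, but even then a deficit of about $0.32$ survives.

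Your final sentence also contains a conceptual slip: in the refined Gluing inequality you do not get to \emph{choose} $M$. Since $s^+(G) = \inf_{M\succ 0} \|A(G)+M\|_2^2$, the inequality must hold for \emph{every} PSD $M$ (equivalently, for the unknown $M = A^-(G)$). The paper closes the gap by establishing a dichotomy valid for all $M$: either certain off-diagonal entries $M_{u_{t,i},u_{t+2,i}}$ along the paths are collectively small, in which case $R_1(M)$ is forced to be large because the corresponding entries of $A^-(P_\ell)$ are bounded \emph{below} by about $0.21$ (Lemma~\ref{lem:path-off-diagonal}, a new estimate beyond the endpoint diagonal bound you cite); or some such sum exceeds $0.57$, in which case one re-glues with a much larger base $G_{t,3}$ and $R_2(M)$ absorbs the deficit via Cauchy--Schwarz. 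This two-sided control on the entries of $M$, combined with extensive numerical verification (Lemma~\ref{lem:600-numerics}) for $\min(j,k,\ell)\le 600$, is the missing idea in your outline.
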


Without loss of generality, we assume that $j\ge k\ge \ell\ge 0$. A triangle with only one attached path (i.e., $k=\ell =0$) can be dealt with easily as shown below.

\begin{proposition}\label{prop:triangle_one_path}
   Let $G=P(j,0,0)$ for some $j\ge 0$. Then $s^+(G)\ge j+3$.
\end{proposition}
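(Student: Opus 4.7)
The plan is to apply the super-additivity result (Theorem~\ref{thm:square_energy_partition}) to the most natural partition of $V(G)$. Label the triangle vertices $a,b,c$ and denote the attached path as $c, v_1, v_2, \ldots, v_j$. Then $G$ contains two \emph{disjoint} induced subgraphs, $H_1 = G[\{a,b,c\}] \cong K_3$ and $H_2 = G[\{v_1, \ldots, v_j\}] \cong P_j$. (For the degenerate case $j=0$, $H_2$ is empty and $G = K_3$ directly.)

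Next I would collect the two positive square energies involved. The spectrum of $K_3$ is $\{2,-1,-1\}$, giving $s^+(K_3) = 4$. Since $P_j$ is bipartite, its spectrum is symmetric about $0$, so $s^+(P_j) = s^-(P_j) = \tfrac{1}{2}\cdot 2(j-1) = j-1$ for $j \ge 1$ (and $s^+(P_0) = 0$). Plugging into Theorem~\ref{thm:square_energy_partition}:
\[s^+(G) \;\ge\; s^+(K_3) + s^+(P_j) \;\ge\; 4 + (j-1) \;=\; j + 3,\]
which is exactly the desired bound $j+3 = n$. The case $j = 0$ is immediate from $s^+(K_3) = 4 \ge 3$.

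There is essentially no obstacle in this proposition: the bound is obtained in a single line from the super-additive split into a triangle and a path, plus the standard symmetric-spectrum computation for $P_j$. The content here is really just recording a clean base case. The genuine difficulty will be for $P(j,k,\ell)$ with $k \ge 1$, where the three attached paths all meet the triangle and the triangle cannot be cleanly peeled off by Theorem~\ref{thm:square_energy_partition} without losing too much; that is the regime in which the Gluing Lemma (Lemma~\ref{lem:gluing}) — whose correction term $s^+(\Gamma)$ allows one to retain information about the gluing vertices rather than discarding them — is designed to apply, and I expect the proof of Theorem~\ref{thm:triangle_with_paths} to reduce the general case to the present proposition via such a gluing step.
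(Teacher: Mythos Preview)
Your proof is correct and is exactly the paper's argument: split $P(j,0,0)$ into the triangle $K_3$ and the path $P_j$, then apply Theorem~\ref{thm:square_energy_partition} together with $s^+(K_3)=4$ and $s^+(P_j)=j-1$. The paper does this in one line without separating out the $j=0$ case or the spectral computations, but the content is identical.
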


\begin{proof}
By Theorem \ref{thm:square_energy_partition}, we have $s^+(P(j,0,0))\ge s^+(K_3) + s^+(P_j) = j +3 = n$. 
\end{proof}

Next, we deal with a triangle with two attached paths $P(j,k,0)$. We first estimate the diagonal entry of $A^-(P_\ell)$ corresponding to a leaf in a path $P_{\ell}$. We recall some well-known facts about trigonometric functions.

\begin{lemma}\label{lemma:sin_cos}
   The following hold:
    \begin{enumerate}[$(i)$]
        \item $x-\frac{x^3}6 \le \sin(x) \le x$ for $x\in (0,1)$.
        \item $|\sin(x)|\ge 2[\frac{x}{\pi}]$ for $x\in [0,\pi]$, where $[\cdot]$ denotes the distance to the nearest integer.
        \item $1-x\le \cos(x)\le 1$ for $x\in (0,1)$.
        \item We have 
        \[\sum_{i = 1}^k \cos(i x) = \frac{\sin((2k + 1) x / 2)}{2\sin(x / 2)} - 1.\]
    \end{enumerate}
\end{lemma}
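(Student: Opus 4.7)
The plan is to verify each of the four items separately; none requires anything beyond standard calculus and trigonometric identities, so this amounts mostly to bookkeeping.

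For (i) and (iii), I would use the workhorse technique of exhibiting an auxiliary function that vanishes at $0$ and has a non-negative derivative on the relevant interval. To get $\sin(x)\le x$ on $(0,1)$, set $f(x) = x - \sin(x)$ and observe $f(0)=0$, $f'(x) = 1-\cos(x)\ge 0$. To get $\sin(x)\ge x - x^3/6$, set $g(x) = \sin(x) - x + x^3/6$, compute $g(0)=g'(0)=0$, and check $g''(x) = x - \sin(x)\ge 0$ by the upper bound just proved. For $\cos(x)\ge 1-x$, a single differentiation of $\cos(x) - 1 + x$ gives derivative $1-\sin(x)\ge 0$; the upper bound $\cos(x)\le 1$ is trivial.

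For (ii), the nearest-integer distance $[x/\pi]$ equals $x/\pi$ on $[0,\pi/2]$ and $(\pi-x)/\pi$ on $[\pi/2,\pi]$, so the claim splits into two halves. On $[0,\pi/2]$ it reduces to Jordan's inequality $\sin(x)\ge 2x/\pi$, which follows from concavity of $\sin$ there (since $\sin''=-\sin\le 0$) placing its graph above the chord joining $(0,0)$ and $(\pi/2,1)$. The right half follows from the left by the symmetry $\sin(x) = \sin(\pi-x)$.

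For (iv), the plan is the classical Dirichlet-kernel telescoping. Multiply both sides by $2\sin(x/2)$ and use the product-to-sum identity
\[2\sin(x/2)\cos(ix) = \sin\!\bigl((i+\tfrac12)x\bigr) - \sin\!\bigl((i-\tfrac12)x\bigr),\]
so summing from $i=1$ to $k$ telescopes to $\sin((2k+1)x/2) - \sin(x/2)$. Dividing by $2\sin(x/2)$ recovers the stated closed form. I do not anticipate a genuine obstacle in any of the four parts; the only point requiring mild care is matching the nearest-integer notation in (ii) to the correct branch of $\sin$ on each half of $[0,\pi]$.
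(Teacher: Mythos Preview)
The paper does not prove this lemma at all; it is introduced with the sentence ``We recall some well-known facts about trigonometric functions'' and left unproved. Your arguments for (i)--(iii) are correct and entirely standard, and the telescoping approach for (iv) is the right one.

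One point worth flagging: your telescoping in (iv) actually yields
\[
\sum_{i=1}^k \cos(ix) = \frac{\sin((2k+1)x/2) - \sin(x/2)}{2\sin(x/2)} = \frac{\sin((2k+1)x/2)}{2\sin(x/2)} - \frac{1}{2},
\]
not the stated $-1$; the lemma as printed carries a small typo (the constant should be $-\tfrac12$). This does not affect the paper's subsequent uses of the identity, since in Lemmas~\ref{lem:path-endpoint} and~\ref{lem:path-off-diagonal} it is always applied to a signed combination of such sums in which the constant terms cancel. Your claim that the telescoping ``recovers the stated closed form'' should therefore be adjusted accordingly.
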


\begin{lemma}\label{lem:path-endpoint}
Let $G = P_\ell$ be a path of length $\ell$ and $u$ be an endpoint. Then we have $(A^-(G))_{u, u} \leq 0.5.$ Furthermore, if $\ell \geq 10$, then we have $(A^-(G))_{u, u} \leq 0.43.$
\end{lemma}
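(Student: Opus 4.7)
The plan is to compute $(A^-(P_\ell))_{u,u}$ explicitly from the spectral decomposition of $P_\ell$ and then bound the resulting trigonometric sum with the aid of Lemma \ref{lemma:sin_cos}. Recall that $P_\ell$ has eigenvalues $\lambda_k = 2\cos(k\omega)$ with orthonormal eigenvectors $(v_k)_j = \sqrt{\tfrac{2}{\ell+1}}\sin(jk\omega)$, where $\omega = \pi/(\ell+1)$. Since $A(P_\ell)$ has zero diagonal, $(A^-)_{u,u} = (A^+)_{u,u} = \tfrac{1}{2}|A|_{u,u}$, and evaluating the spectral sum at an endpoint $u$ gives
\[(A^-(P_\ell))_{u,u} \;=\; \frac{2}{\ell+1}\sum_{k=1}^{\ell} |\cos(k\omega)|\sin^2(k\omega).\]

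The bound $\le 0.5$ follows with almost no work: as $|A|$ is symmetric PSD with $|A|^2 = A^2$, we have $|A|_{u,u}^2 \le \sum_j |A|_{u,j}^2 = (A^2)_{u,u} = \deg(u) \le 1$, so $(A^-)_{u,u} \le \tfrac{1}{2}$. For the sharper bound, I would first symmetrize the display above via $k\mapsto \ell+1-k$ and apply the identity $\cos\theta \sin^2\theta = \tfrac{1}{4}(\cos\theta - \cos 3\theta)$ to obtain
\[(A^-(P_\ell))_{u,u} \;=\; \frac{1}{\ell+1}\sum_{k=1}^{\lfloor (\ell+1)/2\rfloor}\bigl[\cos(k\omega) - \cos(3k\omega)\bigr],\]
and then apply the Dirichlet kernel formula in Lemma \ref{lemma:sin_cos}(iv). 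This yields the parity-dependent closed form $\frac{1}{2(\ell+1)}[\csc(\omega/2) + \csc(3\omega/2)]$ when $\ell+1$ is odd, and the same expression with $\cot$ in place of $\csc$ when $\ell+1$ is even.

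Since $\cot \le \csc$ on $(0,\pi/2)$, both parities satisfy $(A^-)_{u,u} \le f(\omega) := \frac{\omega}{2\pi}[\csc(\omega/2) + \csc(3\omega/2)]$. A direct derivative calculation (equivalently, the observation that $x\mapsto x\csc(x/2)$ is increasing on $(0,\pi)$, since $y\cot y < 1$ for $y \in (0,\pi)$) shows that $f$ is monotonically increasing on $(0, 2\pi/3)$, with $\lim_{\omega\to 0^+} f(\omega) = \tfrac{4}{3\pi} \approx 0.4244$. For $\ell \ge 10$ we have $\omega \le \pi/11$, and the remaining step is to verify $f(\pi/11) \le 0.43$ using the elementary bound $\sin x \ge x - x^3/6$ from Lemma \ref{lemma:sin_cos}(i). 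This last step is the real difficulty: the target constant $0.43$ lies only marginally above the limit value $\tfrac{4}{3\pi}$, and the bound in fact fails for some smaller $\ell$ (for example $\ell = 8$ gives roughly $0.431$), so one must track the Taylor-type error terms carefully rather than use crude estimates.
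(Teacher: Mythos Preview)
Your argument follows the same overall route as the paper: both compute $(A^-(P_\ell))_{u,u}$ from the explicit spectral decomposition, collapse $\cos\theta\sin^2\theta$ to $\tfrac14(\cos\theta-\cos3\theta)$, apply the Dirichlet-kernel identity of Lemma~\ref{lemma:sin_cos}(iv), and arrive at the same parity-dependent closed forms $\frac{1}{2(\ell+1)}[\cot(\omega/2)+\cot(3\omega/2)]$ and $\frac{1}{2(\ell+1)}[\csc(\omega/2)+\csc(3\omega/2)]$.

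Where you differ is in the endgame. The paper simply sets $x=\pi/(2\ell+2)$ and asks Desmos to confirm that each closed form stays below $0.5$, and below $0.43$ once $x\le 0.143$. You instead (i) dispatch the $0.5$ bound by the clean inequality $|A|_{u,u}^2\le (|A|^2)_{u,u}=(A^2)_{u,u}=\deg(u)\le 1$, which avoids any computation; and (ii) dominate the $\cot$ case by the $\csc$ case, prove $f(\omega)=\frac{\omega}{2\pi}[\csc(\omega/2)+\csc(3\omega/2)]$ is increasing via $y\cot y<1$, and reduce the $0.43$ bound to the single evaluation $f(\pi/11)$. Both are genuine upgrades over the paper's ``verified in Desmos'' steps; your observation that $\ell=8$ already exceeds $0.43$ correctly explains why the final check cannot be sloppy. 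Nothing is missing---carrying out $f(\pi/11)\approx 0.4287<0.43$ with the $\sin x\ge x-x^3/6$ bound is routine and completes the proof.
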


\begin{proof}
Label the vertices of $P_{\ell}$ by $1, 2, \ldots, \ell$, where vertex $1$ is a leaf. We know that $\lambda_i(G) = 2 \cos \left(\frac{\pi i}{\ell + 1}\right)$ for $i = 1,\ldots, \ell$. Let $v_i$ denote a normalized $\lambda_i(G)$-eigenvector. It is known that 
\[v_{i, j} = \sqrt{\frac{2}{\ell + 1}} \sin\left(\frac{\pi ij}{\ell + 1}\right).\]
Therefore
    \[(A^+(G))_{1, 1} = (A^-(G))_{1, 1} = \frac{4}{\ell+1} \sum_{i = 1}^{\lfloor (\ell + 1) / 2 \rfloor}\cos \left(\frac{\pi i}{\ell + 1}\right) \sin^2 \left(\frac{\pi i}{\ell + 1}\right).\]
Using the double angle formula, we get
\begin{align*}
    (A^-(G))_{1, 1} &= \frac{1}{\ell+1}  \sum_{i = 1}^{\lfloor (\ell + 1) / 2 \rfloor}2\sin\left(\frac{\pi i}{\ell + 1}\right)  \sin \left(\frac{2\pi i}{\ell + 1}\right) \\
    &= \frac{1}{\ell+1}  \sum_{i = 1}^{\lfloor (\ell + 1) / 2 \rfloor}\cos \left(\frac{\pi i}{\ell + 1}\right)  - \cos \left(\frac{3\pi i}{\ell + 1}\right) .
\end{align*}
Using Lemma \ref{lemma:sin_cos}, we get 
\[(A^-(G))_{1, 1} =\begin{cases}
 \frac{1}{2\ell+2} \left(\cot \left(\frac{\pi}{2\ell + 2}\right) + \cot \left(\frac{3\pi}{2\ell + 2}\right)\right) \quad \text{if $\ell$ is odd}, \\
 \frac{1}{2\ell+2} \left(\csc \left(\frac{\pi}{2\ell + 2}\right) + \csc \left(\frac{3\pi}{2\ell + 2}\right)\right) \quad \text{if $\ell$ is even}. 
\end{cases}\]

Let $x=\frac{\pi}{2\ell +2}$. One can then easily analyze the behaviour of $A^-(G)_{1,1}$ as a function of $x$ in Desmos, and observe that it is always less than $0.5$. Furthermore, if $\ell\ge 10$ (i.e., $x\le 0.143$), then $A^-(G)_{1,1}\le 0.43$.\end{proof}

We are now ready to prove Theorem \ref{thm:triangle_with_paths} when $\ell=0$.

\begin{proposition}\label{prop:triangle_two_path}
   Let $G=P(j,k,0)$ for some $j\ge k\ge 1$. Then $s^+(G)\ge j+k+3.04$. 
\end{proposition}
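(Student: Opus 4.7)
The plan is to apply the Gluing Lemma (Lemma~\ref{lem:gluing}) with a base graph that extends one step into the length-$j$ path, so that the diagonal penalty at the attachment points can be absorbed. Write the triangle as $a,b,c$, the length-$j$ path as $a\sim a_1\sim\cdots\sim a_j$, and the length-$k$ path as $b\sim b_1\sim\cdots\sim b_k$. Let the base $G_0$ be the lollipop on four vertices $\{a,b,c,a_1\}$ (the triangle together with the pendant edge $aa_1$), with distinguished vertices $u_1=a_1$ and $u_2=b$. Let $G_1=P_j$ be the subpath $a_1,\ldots,a_j$ with endpoint $v_1=a_1$, and let $G_2=P_{k+1}$ be the subpath $b,b_1,\ldots,b_k$ with endpoint $v_2=b$. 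Gluing recovers $G$. Since $s^+(P_\ell)=\ell-1$ for every $\ell\ge 1$, Lemma~\ref{lem:gluing} gives
\[
s^+(G)\;\ge\;(j-1)+k+s^+(\Gamma),
\]
where $\Gamma$ is the $4\times 4$ symmetric matrix obtained from $A(G_0)$ by replacing the diagonal entries at $(a_1,a_1)$ and $(b,b)$ by $-d_1$ and $-d_2$, with $d_i=(A^-(G_i))_{v_i,v_i}$. It therefore suffices to prove $s^+(\Gamma)\ge 4.04$.

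By Lemma~\ref{lem:path-endpoint} we have $0\le d_1,d_2\le \tfrac{1}{2}$. Decreasing $d_i$ adds a rank-one PSD matrix to $\Gamma$, so by Weyl's inequality every eigenvalue of $\Gamma$ is monotone non-increasing in $d_i$; consequently, so is $s^+(\Gamma)$, since raising a positive eigenvalue or turning a non-positive eigenvalue positive can only increase $s^+$. It therefore suffices to establish $s^+(\Gamma)\ge 4.04$ in the worst case $d_1=d_2=\tfrac12$. A direct computation at that point yields the characteristic polynomial
\[
\chi_\Gamma(\lambda)\;=\;\lambda^4+\lambda^3-\tfrac{15}{4}\lambda^2-\tfrac{9}{2}\lambda-\tfrac{1}{4}.
\]
One verifies $\chi_\Gamma(2)=-\tfrac14$ and more generally $\chi_\Gamma(2.01)<0$ by direct substitution, so the largest root satisfies $\lambda_1(\Gamma)>2.01$ and hence $s^+(\Gamma)\ge \lambda_1(\Gamma)^2>4.0401>4.04$.

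The main obstacle is choosing the right base graph $G_0$. A naive choice $G_0=K_3$, gluing $P_{j+1}$ and $P_{k+1}$ at two triangle vertices, yields only a $3\times 3$ matrix $\Gamma$ whose top eigenvalue sinks below $\sqrt{3}$ in the worst case $d_1=d_2=\tfrac12$, giving $s^+(\Gamma)\lesssim 2.84$, which is insufficient. Absorbing the pendant edge $aa_1$ into the base lifts the computation into four dimensions and keeps the top eigenvalue of $\Gamma$ above $2$ even under the worst diagonal perturbation, providing precisely the slack of $0.04$ above $4$ needed to match the stated constant.
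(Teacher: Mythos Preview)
Your proof is correct and follows essentially the same route as the paper: the same four-vertex lollipop base $G_0$, the same gluing of $P_j$ at the pendant and $P_{k+1}$ at a triangle vertex, the same reduction to $s^+(\Gamma)\ge 4.04$ with the worst-case diagonal entries $-\tfrac12$ justified by Lemma~\ref{lem:path-endpoint} and the monotonicity remark following Lemma~\ref{lem:gluing}. The only difference is cosmetic: the paper simply asserts $s^+(\Gamma)\ge 4.04$ as a numerical fact, whereas you compute the characteristic polynomial and verify $\lambda_1(\Gamma)>2.01$ by hand (and implicitly use that $\Gamma$ has a single positive eigenvalue, which is true but not stated---though $s^+(\Gamma)\ge\lambda_1(\Gamma)^2$ suffices regardless).
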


\begin{proof} We can view $G = P(j, k, 0)$ as gluing $P_j$ and $P_{k+1}$ onto the red vertices 3 and 1, respectively, of the graph $G_0$ given in Figure \ref{fig:P(1,0,0)}. Thus by Lemmas \ref{lem:gluing} and \ref{lem:path-endpoint}, we have
\[s^+(G) \geq s^+(P_{j}) + s^+(P_{k+1}) + s^+(\Gamma),\]
where $\Gamma$ is the matrix
\[\Gamma = \begin{bmatrix}
    0 & 1 & 1 & 1 \\
    1 & -0.5 & 1 & 0 \\
    1 & 1 & 0 & 0 \\
    1 & 0 & 0 & -0.5
\end{bmatrix}.\]
Since $s^+(\Gamma) \geq 4.04$, we get $s^+(G) \geq (j - 1) + k + 4.04 =  j + k + 3.04$, as desired.
\end{proof}

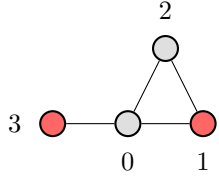
\begin{figure}[H]
    \centering
\begin{tikzpicture}[
    every node/.style       = {circle, draw, thick, minimum size=8pt},
    redv/.style             = {fill=red!60},
    greyv/.style            = {fill=gray!25}]
  \node[greyv,label=below:$0$]  (v0) at (2,0)      {};
  \node[redv,label=below:$1$]   (v1) at (3,0)    {};
  \node[greyv,label=above:$2$]  (v2) at (2.5,1)      {};
  \node[redv,label=left:$3$]    (v3) at (1,0){};

  \draw (v0) -- (v1)
        (v1) -- (v2)
        (v2) -- (v0)
        (v0) -- (v3);
\end{tikzpicture}
    \caption{The graph $G_0$}
    \label{fig:P(1,0,0)}
\end{figure}

We now deal with the general case $P(j,k,\ell)$. First, we present some numerical results which are verified using a computer.

\begin{lemma}\label{lem:600-numerics} Let $t$ be a positive integer. The following are true.
\begin{enumerate}[$(i)$]
    \item Consider the graph $G_{t, 2}$ as shown in Figure \emph{\ref{fig:triangle_blue_path}(a)}, where the bold blue edges denote paths of order $t$ (excluding the vertex in the triangle, i.e., $|V(G_{t,2})| = 3t +3$). Let $\Gamma_{t, 2}$ denote the adjacency matrix of this graph, with the diagonal entries corresponding to the red vertices replaced with $-0.5$. Then for $t \in [2, 600]$, we have
    $$s^+(\Gamma_{t, 2}) \geq 3(t + 1).$$
    \item Consider the graph $G_{t, 3}$ shown in Figure \emph{\ref{fig:triangle_blue_path}(b)}, where the bold blue edges denote paths of order $t$ (excluding the vertex in the triangle, i.e., $|V(G_{t,3})| = 3t +3$). Let $\Gamma_{t, 3}$ denote the adjacency matrix of this graph, with the diagonal entries corresponding to the red vertices replaced with $-0.44$. Then for $t \in [10, 600]$, we have
    $$s^+(\Gamma_{t, 3}) \geq 3(t + 1) - 0.2.$$ 
\end{enumerate}
\end{lemma}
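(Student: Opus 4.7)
The natural approach exploits the $\mathbb{Z}/3$ symmetry of $\Gamma_{t,k}$. Cyclically permuting the three identical branches (a branch being one triangle vertex together with its attached path of length $t$) commutes with $\Gamma_{t,k}$. Discrete Fourier decomposition over $\mathbb{Z}/3$ therefore block-diagonalizes $\Gamma_{t,k}$ into one copy of a tridiagonal matrix $T^+_t$ coming from the trivial representation and two isomorphic copies of a tridiagonal matrix $T^-_t$ coming from the two-dimensional nontrivial real irrep. A short computation yields that $T^\pm_t$ are $(t+1)\times(t+1)$ symmetric tridiagonal matrices with all off-diagonal entries equal to $1$ and diagonals
\[
\diag(T^+_t) = (c+2,\, 0,\, \ldots,\, 0), \qquad \diag(T^-_t) = (c-1,\, 0,\, \ldots,\, 0),
\]
where $c \in \{-0.5,\, -0.44\}$ is the imposed modification at the red vertices. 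Consequently
\[
s^+(\Gamma_{t,k}) = s^+(T^+_t) + 2\, s^+(T^-_t),
\]
and the lemma reduces to a lower bound on this sum for each $t$ in the stated range.

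For each $t$, the tridiagonal matrices $T^\pm_t$ have size at most $601$, are extremely sparse, and can be diagonalized in negligible time by any standard symmetric-tridiagonal eigensolver (e.g., bisection on the Sturm sequence, or QR). Summing the squares of the positive eigenvalues yields $s^+(T^\pm_t)$, and the asserted inequality is then checked for each $t$. To turn this floating-point evidence into a rigorous proof, the plan is to carry the computation out in interval arithmetic (for instance, via INTLAB, or the Julia package IntervalArithmetic.jl), obtaining certified enclosures of each eigenvalue and thereby a certified lower bound on $s^+(\Gamma_{t,k})$.

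The main obstacle is part (i), where the claimed bound $3(t+1)$ is tight and admits no slack, so the certification must separate $s^+(\Gamma_{t,2})$ strictly from the threshold; interval widths and rounding errors must be controlled at every step. An attractive analytic complement, which would also remove the artificial cutoff $t \leq 600$, is to exploit the explicit Chebyshev-polynomial form of the characteristic polynomial of each $T^\pm_t$: a direct expansion shows that it is a linear combination of $U_{t+1}(\lambda/2)$ and $U_t(\lambda/2)$, with coefficients involving $c+2$ or $c-1$ respectively. In principle one can then locate the positive roots analytically and bound their sum of squares in closed form. However, the bookkeeping of sign changes at the crossover between positive and negative eigenvalues, together with the contribution from the one or two ``outlier'' eigenvalues induced by the nontrivial diagonal entry, is delicate — which is presumably why a finite numerical check was preferred in the lemma.
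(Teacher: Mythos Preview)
Your proposal has a genuine gap in part~(i). The matrix $\Gamma_{t,2}$ does \emph{not} have the $\mathbb{Z}/3$ symmetry you invoke: only two of the three path endpoints (vertices $4$ and $5$ in Figure~\ref{fig:triangle_blue_path}(a)) are red and receive diagonal entry $-0.5$; the third endpoint (vertex $3$) is grey and keeps diagonal $0$. Hence cyclically permuting the three branches does not commute with $\Gamma_{t,2}$, and the Fourier block-diagonalisation you describe is unavailable. (There is a residual $\mathbb{Z}/2$ symmetry swapping the two red-tipped branches, which would yield one block of size $2(t+1)$ and one of size $t+1$, but that is not what you wrote and you would still need to handle the larger block.)

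Even in part~(ii), where the threefold symmetry genuinely holds, your ``short computation'' is wrong. The modification $c$ sits at the \emph{endpoint} of each branch (position $t$), while the triangle coupling contributes $+2$ or $-1$ at the \emph{root} (position $0$). The correct block diagonals are
\[
\diag(T^+_t) = (2,\, 0,\, \ldots,\, 0,\, c), \qquad \diag(T^-_t) = (-1,\, 0,\, \ldots,\, 0,\, c),
\]
not $(c+2,\, 0,\, \ldots,\, 0)$ and $(c-1,\, 0,\, \ldots,\, 0)$. These are different tridiagonal matrices with different spectra, so your downstream Chebyshev analysis would also need to be redone.

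For comparison, the paper's own proof is simply a direct computer check for each $t$ in the stated finite range, with no symmetry reduction. Your plan to certify the computation via interval arithmetic is sound in spirit and would in fact be more rigorous than what the paper records; but the two errors above must be repaired before it can be carried out.
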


  \begin{figure}[H]
    \begin{subfigure}{0.45\textwidth}
         \centering
          \begin{tikzpicture}[
    every node/.style       = {circle, draw, thick, minimum size=8pt},
    redv/.style             = {fill=red!60},
    greyv/.style            = {fill=gray!25}
    ]
      \node[greyv,label=below:$0$]  (v0) at (2,0)      {};
      \node[greyv,label=below:$1$]   (v1) at (3,0)    {};
      \node[greyv,label=left:$2$]  (v2) at (2.5,1)      {};
      \node[greyv,label=left:$3$]    (v3) at (1,0){};
      \node[redv,label=right:$4$]    (v4) at (4,0){};
    \node[redv,label=left:$5$]    (v5) at (2.5,2){};
    
      \draw (v0) -- (v1)
            (v1) -- (v2)
            (v2) -- (v0);
      \draw[blue, line width=1.5pt] (v0) -- (v3) ;
      \draw[blue, line width=1.5pt] (v4) -- (v1) ;
      \draw[blue, line width=1.5pt] (v5) -- (v2);
    \end{tikzpicture}
         \caption{$G_{t,2}$}
    \end{subfigure}
     \begin{subfigure}{0.45\textwidth}
         \centering
        \begin{tikzpicture}[
    every node/.style       = {circle, draw, thick, minimum size=8pt},
    redv/.style             = {fill=red!60},
    greyv/.style            = {fill=gray!25}
    ]
      \node[greyv,label=below:$0$]  (v0) at (2,0)      {};
      \node[greyv,label=below:$1$]   (v1) at (3,0)    {};
      \node[greyv,label=left:$2$]  (v2) at (2.5,1)      {};
      \node[redv,label=left:$3$]    (v3) at (1,0){};
      \node[redv,label=right:$4$]    (v4) at (4,0){};
    \node[redv,label=left:$5$]    (v5) at (2.5,2){};
    
      \draw (v0) -- (v1)
            (v1) -- (v2)
            (v2) -- (v0);
      \draw[blue, line width=1.5pt] (v0) -- (v3) ;
      \draw[blue, line width=1.5pt] (v4) -- (v1) ;
      \draw[blue, line width=1.5pt] (v5) -- (v2);
    \end{tikzpicture} 
         \caption{$G_{t,3}$}
    \end{subfigure}   
        \caption{}
        \label{fig:triangle_blue_path}
    \end{figure}

\begin{lemma}\label{lemma:less-than-600} 
If $\min(j, k, \ell) \leq 600$, then $s^+(P(j,k,\ell))\ge n$.
\end{lemma}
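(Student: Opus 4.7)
The plan is to case-split on $\ell = \min(j, k, \ell)$, assuming WLOG $j \ge k \ge \ell$. The case $\ell = 0$ is immediate from Propositions \ref{prop:triangle_one_path} and \ref{prop:triangle_two_path}, so I henceforth assume $\ell \ge 1$.

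The key observation is that the graph $G_{\ell, 2}$ in Lemma \ref{lem:600-numerics} is structurally the same as $P(\ell, \ell, \ell)$, with two of its three attached-path endpoints designated as ``red''. I would therefore apply the Gluing Lemma (Lemma \ref{lem:gluing}) with base graph $G_0 = G_{\ell, 2}$, gluing the paths $P_{j - \ell + 1}$ and $P_{k - \ell + 1}$ onto the two red endpoints. Since each red vertex is already the terminus of a length-$\ell$ attached path in $G_0$, the identification extends those two paths to total lengths $j$ and $k$, producing $P(j, k, \ell)$ exactly. Combining the Gluing Lemma, Lemma \ref{lem:path-endpoint} (which bounds the endpoint entry of $A^-(P_s)$ by $0.5$), the Remark after Lemma \ref{lem:gluing} (which permits replacing $-d_i$ by any smaller value, here $-0.5$), and the identity $s^+(P_s) = s - 1$ (valid because $P_s$ is bipartite with $s - 1$ edges), I obtain
\[
s^+(P(j, k, \ell)) \ge (j - \ell) + (k - \ell) + s^+(\Gamma_{\ell, 2}).
\]
For $\ell \in [2, 600]$, Lemma \ref{lem:600-numerics}(i) yields $s^+(\Gamma_{\ell, 2}) \ge 3(\ell + 1)$, hence $s^+(P(j, k, \ell)) \ge j + k + \ell + 3 = n$. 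The degenerate situations $j = \ell$ or $k = \ell$ (in which a glued path becomes $P_1$ with $s^+(P_1) = 0$) fit into the same framework via the Remark, since $-0.5$ remains a valid replacement for $-d_i = 0$.

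For the remaining subcase $\ell = 1$, which falls outside the range of Lemma \ref{lem:600-numerics}(i), I would argue separately by applying the Gluing Lemma with a different base graph (for example $G_0 = P(2, 2, 1)$ when $k \ge 2$, with the two length-$2$ endpoints red), followed by a direct small-matrix numerical verification of the required $s^+(\Gamma)$ bound; the few residual small cases $P(j, 1, 1)$ can be dispatched by explicit computation.

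The main obstacle is precisely the $\ell = 1$ subcase: a naive $t = 1$ analog of Lemma \ref{lem:600-numerics}(i) is on the wrong side of $6$ (direct computation on the net graph gives $s^+(\Gamma_{1, 2}) \approx 5.986$), so choosing a base graph that produces a strict enough bound while keeping the numerical verification tractable is where real care is required. Everything else reduces cleanly to the Gluing Lemma combined with the numerical bounds already established.
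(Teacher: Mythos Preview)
Your approach for $\ell = 0$ and $\ell \in [2, 600]$ matches the paper's exactly: glue two paths onto $G_{\ell, 2}$ and invoke Lemma~\ref{lem:600-numerics}(i). Your handling of the degenerate cases $j = \ell$ or $k = \ell$ via the Remark after Lemma~\ref{lem:gluing} is correct.

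For $\ell = 1$, however, there is a genuine gap: the family $P(j, 1, 1)$ with $j \ge 1$ is \emph{infinite}, so it cannot be ``dispatched by explicit computation.'' You need one more application of the Gluing Lemma to cover this subfamily uniformly. The paper does precisely this: it takes as base the five-vertex graph $P(1,1,0)$ with the bare triangle vertex marked red, glues $P_{j+1}$ there (extending that vertex into a path of length $j$), and verifies that the resulting $5 \times 5$ matrix $\Gamma_b$ satisfies $s^+(\Gamma_b) \ge 5$, yielding $s^+(P(j,1,1)) \ge j + 5 = n$. Once you add this step, your argument is complete. As a minor variation, for the $k \ge 2$, $\ell = 1$ subcase the paper uses the asymmetric base $P(3, 2, 1)$ (which requires $j \ge 3$ and leaves the single case $j = k = 2$ to be checked directly) rather than your proposed $P(2, 2, 1)$; either choice is fine in principle provided the corresponding $\Gamma$ clears the required numerical threshold, which you correctly identify as the delicate point.
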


\begin{proof}
Without loss of generality, we may assume that $j \geq k \geq \ell$. If $\ell=0$, then Propositions \ref{prop:triangle_one_path} and \ref{prop:triangle_two_path}, we are done. If $\ell \in [2, 600]$, we can regard $P(j, k, \ell)$ as gluing $P_{j - \ell+1}$ and $P_{k - \ell+1}$ onto the red vertices of $G_{\ell, 2}$. By Lemmas \ref{lem:gluing}, \ref{lem:path-endpoint} and \ref{lem:600-numerics}, we have
\[s^+(P(j, k, \ell)) \geq s^+(\Gamma_{\ell, 2}) + (j - \ell) + (k - \ell)\ge 3(\ell +1) + (j-\ell) + (k-\ell) = n.\]
So suppose $\ell=1$. If $j \geq 3$ and $k \geq 2$, the same argument goes through using the graph shown in Figure \ref{fig:P(3,3,1)}$(a)$ and the corresponding weighted matrix $\Gamma_a$. When $k = 1$, and $j\ge 1$, the same argument goes through using the graph shown in Figure \ref{fig:P(3,3,1)}$(b)$ and the corresponding weighted matrix $\Gamma_b$. 
\[\Gamma_a  =   \begin{bmatrix}
    0 & 1 & 1 & 1 & 0 & 0 & 0 & 0 & 0\\
    1 & 0 & 1 & 0 & 1 & 0 & 0 & 0 & 0\\
    1 & 1 & 0 & 0 & 0 & 1 & 0 & 0 & 0\\
    1 & 0 & 0 & 0 & 0 & 0 & 0 & 0 & 0\\
    0 & 1 & 0 & 0 & 0 & 0 & 1 & 0 & 0\\
    0 & 0 & 1 & 0 & 0 & 0 & 0 & 1 & 0\\
    0 & 0 & 0 & 0 & 1 & 0 & 0 & 0 & 1\\
    0 & 0 & 0 & 0 & 0 & 1 & 0 & -0.5 & 0\\
    0 & 0 & 0 & 0 & 0 & 0 & 1 & 0 & -0.5\\
\end{bmatrix},  \quad \Gamma_b = \begin{bmatrix}
    0 & 1 & 1 & 1 & 0\\
    1 & 0 & 1 & 0 & 1\\
    1 & 1 & -0.5 & 0 & 0\\
    1 & 0 & 0 & 0 & 0 \\
    0 & 1 & 0 & 0 & 0
\end{bmatrix}.\]
Finally, the case $j = k = 2$ can be checked explicitly.
\end{proof}
\begin{figure}
    \begin{subfigure}{0.6\textwidth}
         \centering
           \begin{tikzpicture}[
    every node/.style       = {circle, draw, thick, minimum size=8pt},
    redv/.style             = {fill=red!60},
    greyv/.style            = {fill=gray!25}
    ]
      \node[greyv,label=below:$0$]  (v0) at (2,0){};
      \node[greyv,label=below:$1$]   (v1) at (3,0){};
      \node[greyv,label=left:$2$]  (v2) at (2.5,1){};
      \node[greyv,label=left:$3$]    (v3) at (1,0){};
      \node[greyv,label=below:$4$]    (v4) at (4,0){};
      \node[greyv,label=below:$6$]    (v6) at (5,0){};
      \node[redv,label=right:$8$]    (v8) at (6,0){};
    \node[greyv,label=left:$5$]    (v5) at (2.5,2){};
    \node[redv,label=left:$7$]    (v7) at (2.5,3){};
    
      \draw (v0) -- (v1)
            (v1) -- (v2)
            (v2) -- (v0)
            (v0) -- (v3) 
            (v4) -- (v1) 
            (v6) -- (v4) 
            (v8) -- (v6) 
            (v5) -- (v2)
            (v7) -- (v5);
    \end{tikzpicture}
         \caption{}
    \end{subfigure}
     \begin{subfigure}{0.35\textwidth}
         \centering
       \begin{tikzpicture}[
    every node/.style       = {circle, draw, thick, minimum size=8pt},
    redv/.style             = {fill=red!60},
    greyv/.style            = {fill=gray!25}
    ]
      \node[greyv,label=below:$0$]  (v0) at (2,0)      {};
      \node[redv,label=below:$2$]   (v1) at (3,0)    {};
      \node[greyv,label=left:$1$]  (v2) at (2.5,1)      {};
      \node[greyv,label=left:$3$]    (v3) at (1,0){};
      \node[greyv,label=left:$4$]    (v4) at (2.5,2){};
    
      \draw (v0) -- (v1)
            (v1) -- (v2)
            (v2) -- (v0)
            (v0) -- (v3) 
            (v4) -- (v2);
    \end{tikzpicture}
         \caption{}
    \end{subfigure}   
        \caption{}
        \label{fig:P(3,3,1)}
    \end{figure}

Next, we obtain an estimate on certain off-diagonal entries of $A^-(P_{\ell})$. 

\begin{lemma}\label{lem:path-off-diagonal}
Let $G = P_\ell$ be a path of order $\ell \geq 200$ with vertex set $V(G)=\{1,\ldots, \ell\}$. Then, for $j \in [100, \ell - 100]$, we have $(A^-(G))_{j, j + 2} \geq 0.21.$
\end{lemma}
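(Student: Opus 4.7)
The plan is to prove the lemma by explicit spectral computation using the known eigendata of the path. As in the proof of Lemma~\ref{lem:path-endpoint}, write $\theta_i = \pi i/(\ell+1)$; then $\lambda_i(P_\ell) = 2\cos\theta_i$ with orthonormal eigenvectors $v_{i,k} = \sqrt{2/(\ell+1)}\sin(k\theta_i)$. Since vertices $j$ and $j+2$ are non-adjacent in $P_\ell$, $A_{j,j+2}=0$, and hence $(A^-)_{j,j+2} = (A^+)_{j,j+2}$; I will compute the latter.

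Expanding $(A^+)_{j,j+2}$ as a sum over the positive eigenvalues and applying the product-to-sum identities to $\cos\theta_i\sin(j\theta_i)\sin((j+2)\theta_i)$ yields
\[(A^-)_{j,j+2} = \frac{1}{\ell+1}\sum_{i=1}^{\lfloor\ell/2\rfloor}\bigl[\cos\theta_i + \cos(3\theta_i) - \cos((2j+1)\theta_i) - \cos((2j+3)\theta_i)\bigr].\]
I would then evaluate each of the four partial cosine sums via the Dirichlet-kernel formula in Lemma~\ref{lemma:sin_cos}(iv). Focusing on $\ell$ even (the odd case differs only by replacing $\csc$ with $\cot$ for odd $k$, a discrepancy of order $1/\ell^2$), and using $\sin((2j+1)\pi/2) = (-1)^j$ and $\sin((2j+3)\pi/2) = (-1)^{j+1}$, the four $-\tfrac12$ constants cancel and the formula simplifies to
\[(A^-)_{j,j+2} = \frac{1}{\ell+1}\left[\frac{1}{2\sin\epsilon_1} - \frac{1}{2\sin\epsilon_3} - (-1)^j\!\left(\frac{1}{2\sin\alpha} - \frac{1}{2\sin\beta}\right)\right],\]
where $\epsilon_k = k\pi/(2(\ell+1))$, $\alpha = \epsilon_{2j+1}$ and $\beta = \epsilon_{2j+3}$.

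A Taylor expansion of $1/\sin$ shows the first two terms equal $\tfrac{2}{3\pi} - \tfrac{\pi}{12(\ell+1)^2} + O(1/\ell^4) \approx 0.2122$, which is the main contribution. For the remaining pair I would apply $\tfrac1{\sin\alpha} - \tfrac1{\sin\beta} = \tfrac{\sin\beta-\sin\alpha}{\sin\alpha\sin\beta}$. The numerator is bounded by $|\beta-\alpha|=\pi/(\ell+1)$, while the hypothesis $j \in [100, \ell-100]$ combined with $\sin x \ge 2x/\pi$ on $[0,\pi/2]$ (applied, by symmetry, to $\min(\alpha,\pi-\alpha)$ and $\min(\beta,\pi-\beta)$) gives $\sin\alpha,\sin\beta \ge 199/(\ell+1)$. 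Hence this error term contributes at most $\pi/(2\cdot 199^2) < 4\times 10^{-5}$ in absolute value, and combining gives $(A^-)_{j,j+2} \ge \tfrac{2}{3\pi} - 5\times 10^{-5} > 0.21$ for every $\ell \ge 200$.

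The main obstacle is the tightness of the bound: $\tfrac{2}{3\pi}$ exceeds $0.21$ by only about $10^{-3}$, so every error must be controlled quantitatively rather than asymptotically. In particular, the telescoping of the $(2j+1)$- and $(2j+3)$-terms is essential, since each individual cosecant can be of order $\ell$, while their paired difference is $O(1/\ell)$ precisely because $j$ is separated from both endpoints by at least $100$. A minor technical nuisance is splitting the analysis by parity of $\ell$ in the Dirichlet formula.
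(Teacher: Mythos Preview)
Your proposal is correct and follows essentially the same route as the paper: expand $(A^+)_{j,j+2}$ via the path eigendata, apply the product-to-sum identity to obtain four cosine sums, evaluate each with the Dirichlet kernel, and split into a main term $\tfrac{1}{2(\ell+1)}(\csc\epsilon_1-\csc\epsilon_3)$ plus an oscillatory error controlled by the separation $j\in[100,\ell-100]$. The only noteworthy difference is that the paper verifies the main-term lower bound $0.211$ numerically (via Desmos), whereas you extract it from the Taylor expansion of $\csc$; both are valid, and your observation that the correction is $-\pi/(12(\ell+1)^2)+O(1/\ell^4)$ is indeed negligible against the $\approx 2\times 10^{-3}$ margin.
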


\begin{proof}
Similar to the calculation in Lemma \ref{lem:path-endpoint}, we have
    \[(A^-(G))_{j, j + 2} = (A^+(G))_{j, j + 2} = \frac{4}{\pi} \sum_{i = 1}^{\lfloor (\ell + 1) / 2 \rfloor} \frac{\pi}{\ell + 1} \cos \left(\frac{\pi i }{\ell + 1}\right) \sin \left(\frac{\pi i j}{\ell + 1}\right) \sin \left(\frac{\pi i (j + 2)}{\ell + 1}\right).\]
We have the identity
    \begin{align*}
    &4\cos \left(\frac{\pi i }{\ell + 1}\right) \sin \left(\frac{\pi i j}{\ell + 1}\right) \sin \left(\frac{\pi i (j + 2)}{\ell + 1}\right) \\
    =&  \cos\left(\frac{\pi i }{\ell + 1}\right) + \cos\left(\frac{3\pi i }{\ell + 1}\right) - \cos\left(\frac{(2j + 1)\pi i }{\ell + 1}\right) - \cos\left(\frac{(2j + 3)\pi i }{\ell + 1}\right).    
    \end{align*}
Then using Lemma \ref{lemma:sin_cos} we get
\begin{equation}\label{eq:off_diagonal_main}
    (A^-(G))_{j, j + 2} = \frac{1}{2(\ell + 1)}\sum_{\theta \in \left\{\frac{\pi}{\ell + 1}, \frac{3\pi}{\ell + 1}, \frac{(2j + 1)\pi }{\ell + 1}, \frac{(2j + 3)\pi }{\ell + 1}\right\}} s_\theta\frac{\sin((2 \lfloor (\ell + 1) / 2 \rfloor + 1) \theta / 2)}{\sin(\theta / 2)},
\end{equation}
where $s_{\theta} \in \{-1, 1\}$ is the sign term. We consider the following cases.

\textbf{Case 1:} $\ell$ is even. ~ In this case, we have $2 \lfloor (\ell + 1) / 2 \rfloor + 1 = \ell + 1.$ Hence, setting $x = \frac{\pi}{2(\ell + 1)}$, the sum in \eqref{eq:off_diagonal_main} is simply
    $$(A^-(G))_{j, j + 2} = \frac{x}{\pi} \left(\frac{1}{\sin(x)} - \frac{1}{\sin(3x)} \pm \left(\frac{1}{\sin((2j + 1) x)} - \frac{1}{\sin((2j + 3) x)}\right)\right)$$
where $\pm$ depends on the parity of $j$ and $\ell$. Using the graphing calculator Desmos, it can be checked that
   \begin{equation}\label{eq:off_diagonal_1}
       \frac{x}{\pi}\left( \frac{1}{\sin(x)} - \frac{1}{\sin(3x)}\right)\ge 0.211, 
   \end{equation}
whenever $x\le 0.1$ (which holds since $\ell\ge 200$.)

Next, using Lemma \ref{lemma:sin_cos} and since $j\in [100, \ell - 100]$, we have 
\[ |\sin((2j+1)x)| \ge 2\left[ \frac{(2j+1)x}{\pi}\right] = 2\left[ \frac{(2j+1)}{2(\ell +1)}\right]\ge 2\left(\frac{200}{2(\ell +1)}\right) = \frac{400x}{\pi}.\]
Similarly, we get
\[ |\sin((2j+3)x)|\ge \frac{400x}{\pi}.\]
We have 
\begin{align}\label{eq:off_diagonal_2}
     & \  \quad \frac{x}{\pi}\left(\frac{1}{\sin((2j + 1) x)} - \frac{1}{\sin((2j + 3) x)}\right) \nonumber\\
     & = \frac{x}{\pi}\left( \frac{2 \sin(x) \cos(2(j + 1) x)}{\sin((2j + 1) x) \sin((2j + 3) x)}\right)\nonumber\\
     & \le \frac{x}{\pi}\left(\frac{2 \pi^2 x}{400^2 x^2}\right) \le \frac{2\pi}{400^2}< 10^{-4}.
\end{align}
Using \eqref{eq:off_diagonal_1} and \eqref{eq:off_diagonal_2}, we conclude that $(A^-(G))_{j, j + 2} \geq 0.21.$

\textbf{Case 2:} $\ell$ is odd. ~ 
We have $2 \lfloor (\ell + 1) / 2 \rfloor + 1 = \ell + 2.$
Set $x = \frac{\pi}{2(\ell + 1)}$. Then \eqref{eq:off_diagonal_main} becomes 
\[(A^-(G))_{j, j + 2} = \frac{x}{\pi}\bigg(\cot(x) - \cot(3x) \pm \left(\cot((2j + 1) x) - \cot((2j + 3) x)\right)\bigg)\]
where $\pm$ depends on the parity of $j$ and $\ell$. Using Desmos, it can be checked that
\begin{equation}\label{eq:off_diagonal_3}
    \frac{x}{\pi}(\cot(x)-\cot(3x))\ge 0.212, 
\end{equation}
whenever $0<x\le 0.1$ (which holds since $\ell \ge 200$).

Again, using Lemma \ref{lemma:sin_cos} and the estimates for $|\sin((2j+1)x)|$ and $|\sin((2j+3)x)|$ from Case 1, we get
\begin{align}\label{eq:off_diagonal_4}
     & \  \quad \frac{x}{\pi}\left(\frac{\cos((2j + 1) x)}{\sin((2j + 1) x)} - \frac{\cos((2j + 3) x)}{\sin((2j + 3) x)}\right) \nonumber\\
     & = \frac{x}{\pi}\left( \frac{\sin(2x)}{\sin((2j + 1) x) \sin((2j + 3) x)}\right)\nonumber\\
     & \le \frac{x}{\pi}\left(\frac{2 \pi^2 x}{400^2 x^2}\right) \le \frac{2\pi}{400^2}< 10^{-4}.
\end{align}
Using \eqref{eq:off_diagonal_3} and \eqref{eq:off_diagonal_4}, we have $(A^-(G))_{j, j + 2} \geq 0.21.$
\end{proof}

We are ready to complete the proof of Theorem \ref{thm:triangle_with_paths}.

\begin{proof}[Proof of Theorem \ref{thm:triangle_with_paths}]
In light of Propositions \ref{prop:triangle_one_path}, \ref{prop:triangle_two_path} and Lemma \ref{lemma:less-than-600}, we may assume that $\min(j, k, \ell) \geq 601$. Let $G = P(j, k, \ell)$ and $n$ be the order of $G$. We label the three attached paths $1,2,3$ arbitrarily, and let $u_{t,i}$ denote the $t$-th vertex on path $i$ (with $u_{1,i}$ adjacent to a vertex on the triangle). Our goal is to prove that for any PSD matrix $M$, we have
\[\norm{A(G) + M}_2^2 \geq n.\]
We now carefully analyze the entries of $M$. Specifically, for each $t \in [1, 550]$ and $i \in \{1,2,3\}$, let $M_{t, i}$ denote the entry of $M$ corresponding to $u_{t, i}$ and $u_{t + 2, i}$. Set
    \[s_t = M_{t,1} + M_{t, 2} + M_{t, 3}.\]
We consider the following two cases.
  
\textbf{Case 1:} For each $t \in [101, 500]$, we have $s_t \leq 0.57$.

In this case, we apply Lemma \ref{lem:gluing} by setting $G_0=G_{1,3}$ (as shown in Figure \ref{fig:triangle_blue_path}(b)), and letting $G_i$ be path $i$ minus the red vertex. Then we have
\[\norm{A(G) + M}_2^2  \geq s^+(P_j) + s^+(P_k) + s^+(P_{\ell}) + s^+(\Gamma) + R_1(M)\ge n - 0.32 + R_1(M),\]
where $\Gamma$ is the following matrix with $s^+(\Gamma) \geq 5.68$:
\[\Gamma = \begin{bmatrix}
        0 & 1 & 1 & 1 & 0 & 0\\
        1 & 0 & 1 & 0 & 1 & 0\\
        1 & 1 & 0 & 0 & 0 & 1\\
        1 & 0 & 0 & -0.5 & 0 & 0 \\
        0 & 1 & 0 & 0 & -0.5 & 0 \\
        0 & 0 & 1 & 0 & 0 & -0.5
    \end{bmatrix}\]
In what follows, we show that $R_1(M)\ge 0.32$. We have 
\begin{align*}
    R_1(M) & = \sum_{i = 1}^k \sum_{\substack{u, u' \in V(G_i) \\ (u, u') \neq (v_i, v_i)}} \abs{(A^-(G))_{u, u'} - M_{u, u'}}^2\\
    & \ge 2 \sum_{i = 1}^3 \sum_{t = 101}^{500} \abs{(A^-(G_i))_{u_{t, i}, u_{t + 2, i}} - M_{u_{t, i}, u_{t + 2, i}}}^2
\end{align*}
Now, by Lemma \ref{lem:path-off-diagonal}, we have $(A^-(G_i))_{u_{t, i}, u_{t + 2, i}} \geq 0.21,$
and, by assumption, we have
\[\sum_{i = 1}^3 M_{u_{t, i}, u_{t + 2, i}} = s_t \leq 0.57.\]
By the Cauchy-Schwarz inequality, we conclude that, for each $t \in [101,500]$, 
\[\sum_{i = 1}^3 \abs{(A^-(G_i))_{u_{t, i}, u_{t + 2, i}} - M_{u_{t, i}, u_{t + 2, i}}}^2 \geq 3  (0.21 - 0.57 / 3)^2 =  0.0012.\]
Thus, $R_1(M) \geq 2 \times 400 \times 0.0012 = 0.96.$

\textbf{Case 2:} There exists some $t \in [101, 500]$ such that $s_t > 0.57$.

In this case, we apply Lemma \ref{lem:gluing} by setting $G_0$ equal to $G_{t, 3}$ defined in Lemma \ref{lem:600-numerics}, and let $G_1, G_2, G_3$ be paths of orders $j - t  + 1, k - t+1, \ell - t+1$, respectively. Note that all three lengths are at least $100$ since $\min\{j,k,\ell\}\ge 600$. We have
    \[\norm{A(G) + M}_2^2 \geq s^+(P_{j - t+1}) + s^+(P_{k - t+1}) + s^+(P_{\ell - t+1})+ s^+(\Gamma_{t, 3}) + R_2(M),\]
    where $\Gamma_{t,3}$ is defined in Lemma \ref{lem:600-numerics}, and 
    \[R_2(M) = 2 \sum_{i = 1}^3 \sum_{u \in V(G_i) \backslash \{v_i\}, u' \in V(G_0) \backslash \{u_i\}} \abs{M_{u, u'}}^2.\]
By Lemma \ref{lem:path-endpoint}, the diagonal entries of $\Gamma_{t,3}$ are bounded (in absolute value) by $0.44$ as $t\ge 100$. Thus, by Lemma \ref{lem:600-numerics}, we have
    \[\norm{A(G) + M}_2^2 \ge (j - t) + (k - t) + (\ell - t) + 3(t + 1) - 0.2 + R_2(M)=n - 0.2 +R_2(M).\]
To complete the proof, we only need to show that $R_2(M)\ge 0.2$.    Now, we crucially observe that $u = u_{t, i}$ and $u' = u_{t + 2, i}$ appear in the expression for $R_2(M)$. Hence 
    \[R_2(M) \geq 2 \sum_{i = 1}^3 M_{t, i}^2.\]
By the Cauchy-Schwarz inequality, we have
    \[R_2(M) \geq \frac{2s_t^2}{3} \ge \frac{2(0.57)^2}{3} = 0.2166.\]
The proof is complete.    
\end{proof}

\subsection{Claw-free graphs: the general case}

We first prove a lemma about the structure of claw-free graphs, which we exploit in the proof of the main theorem in this section.

\begin{lemma}\label{lemma:claw_free_component}
  Let $G$ be a connected claw-free graph and $K$ be a clique of order $k$ in $G$. Then, $G-V(K)$ has at most $k+1$ components.   
\end{lemma}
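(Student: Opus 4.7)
My plan is to prove the bound by analyzing how neighbors of vertices of $K$ distribute among the components of $G - V(K)$. The main tool is a simple claw-avoidance observation that limits each vertex of $K$ to at most two components.

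First I would establish the following local constraint: for each $v \in V(K)$, the vertex $v$ has neighbors in at most two components of $G - V(K)$. Indeed, if $v$ had neighbors $u_1, u_2, u_3$ lying in three distinct components of $G - V(K)$, then the $u_i$ are pairwise non-adjacent, and so $\{v, u_1, u_2, u_3\}$ induces a claw in $G$, a contradiction. Also, because $G$ is connected, every component of $G - V(K)$ contains at least one vertex adjacent to $V(K)$.

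Next I would split into two cases. In the first case, no vertex of $K$ has neighbors in two components of $G - V(K)$, so each component is attached to at least one vertex of $K$ and each vertex of $K$ is attached to at most one component; the number of components is then at most $k \le k+1$. In the second case, some $v \in V(K)$ has neighbors $u_1 \in C_1$ and $u_2 \in C_2$ in two distinct components $C_1 \ne C_2$, and $v$ has no other external component. The key step is: for every $w \in V(K) \setminus \{v\}$, applying the claw-freeness to $\{v, w, u_1, u_2\}$ (where $v \sim w, u_1, u_2$ and $u_1 \not\sim u_2$) forces $w \sim u_1$ or $w \sim u_2$, so every such $w$ has a neighbor in $C_1 \cup C_2$.

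Combining this with the fact that $w$ has neighbors in at most two components of $G - V(K)$, each $w \in V(K) \setminus \{v\}$ can have a neighbor in at most one component outside $\{C_1, C_2\}$. Since $v$ itself has no external neighbors outside $C_1 \cup C_2$, every component distinct from $C_1, C_2$ must be attached to some $w \in V(K) \setminus \{v\}$, yielding at most $k - 1$ such additional components, for a grand total of at most $2 + (k-1) = k+1$. I do not foresee any real obstacle here; the only thing to be careful about is cleanly handling the degenerate subcases (e.g.\ $k = 1$ or components sharing multiple attachment vertices), but these fit into the same case analysis without incident.
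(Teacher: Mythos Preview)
Your proposal is correct and follows essentially the same argument as the paper: the paper argues by contradiction (assuming at least $k+2$ components and using pigeonhole to locate a vertex of $K$ with neighbours in two components), but the heart of both proofs is the identical claw-avoidance step on $\{v,w,u_1,u_2\}$ forcing every other clique vertex into $C_1\cup C_2$ and hence into at most one further component. Your direct case split is a cosmetic variation on the paper's contradiction framing.
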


\begin{proof} Suppose to the contrary there are at least $k+2$ components, say $C_1, \ldots, C_{k+2}$, in $G-K$. Let $V(K)=\{v_1, \ldots, v_k\}$. Since $G$ is claw-free, each $v_i$ has neighbours in at most two components of $G-K$. By the pigeonhole principle, there exists a vertex in $K$, say $v_1$, with neighbours $w_1$ and $w_2$, which, without loss of generality, lie in $C_1$ and $C_2$, respectively. Now, every vertex $v_i$, for $2\le i\le k$, is adjacent to either $w_1$ or $w_2$; for otherwise $v_1, w_1, w_2, v_i$ induce a claw. In particular, for $2\le i \le k$, each $v_i$ is adjacent to at most one of the components $C_3, \ldots, C_{k+2}$. This means that there is $C_i$ with no neighbours in $K$. This contradicts the connectedness of $G$.
\end{proof}

\begin{theorem}
Let $G$ be a connected claw-free graph with $\Delta(G)\ge 3$. Then $s^+(G)\ge n$. 
\end{theorem}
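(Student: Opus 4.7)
The plan is to prove the statement by strong induction on $n$. Since $\Delta(G) \ge 3$ forces $n \ge 4$, the base cases (the small connected claw-free graphs with $\Delta \ge 3$, such as $K_4$, $K_4 - e$, and the paw) are verified by direct spectral computation.

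For the inductive step, first suppose $G$ is unicyclic. A vertex $v$ of degree $\ge 3$ must have two adjacent neighbors (otherwise three pairwise non-adjacent neighbors of $v$ induce a claw at $v$), so $v$ lies in a triangle, which is the unique cycle of $G$. If two pendant paths were rooted at a common triangle vertex $u$, then $u$ together with the opposite triangle vertex and the starting vertices of the two paths would induce a claw at $u$. Hence each triangle vertex carries at most one pendant path, giving $G \cong P(j,k,\ell)$ for some $j,k,\ell \ge 0$, and \autoref{thm:triangle_with_paths} yields $s^+(G) \ge n$.

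Now assume $G$ is not unicyclic, so $m \ge n+1$. The goal is to find a vertex $u$ lying on some induced $P_3 \subseteq V(G)$ such that (i)~$u$ is not a cut vertex of $G$, (ii)~some vertex of degree $\ge 3$ in $G$ retains degree $\ge 3$ in $G-u$ (automatic if $\Delta(G) \ge 4$), and (iii)~\autoref{lemma:P3_lemma} applied to this $P_3$ actually returns $u$. Granted such a $u$, the inductive hypothesis applies to the connected claw-free graph $G-u$ of order $n-1$ with $\Delta \ge 3$, giving $s^+(G-u) \ge n-1$, and the $P_3$-removal lemma yields
\[ s^+(G) \;\ge\; s^+(G-u) + 1 + \tfrac{1}{16} \;>\; n. \]
Since the lemma chooses $u$ rather than letting us pick, the task reduces to locating a $P_3$ whose three vertices all satisfy (i) and (ii). I would carry this out by a case analysis on the block decomposition of $G$ and on the location of its degree-$\ge 3$ vertices, using the fact that a block containing a triangle typically admits such a $P_3$.

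The main obstacle will be a short residual list of configurations---graphs in which every $P_3$ meets a cut vertex, or in which $\Delta(G) = 3$ and all vertices of degree $3$ lie on a single short cycle (often a structure resembling $C(k,\ell)$). For these I would bypass \autoref{lemma:P3_lemma} and instead apply the gluing lemma (\autoref{lem:gluing}) directly: take a base graph $G_0$ encoding the bicyclic core, glue on the residual pendant paths at their attachment points, and perform diagonal adjustments in the style of the proof of \autoref{thm:triangle_with_paths} (using the endpoint bound of \autoref{lem:path-endpoint}). The refined super-additivity afforded by \autoref{lem:gluing} should then suffice to push $s^+(G)$ over $n$ in all remaining configurations.
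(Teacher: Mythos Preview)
Your unicyclic reduction to $P(j,k,\ell)$ is fine and matches the paper. The non-unicyclic step, however, is only a plan, and it does not close.

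For $P_3$-removal plus induction to work you need an induced $P_3$ \emph{each} of whose three vertices $u$ satisfies $s^+(G-u) \ge n - 1 - \tfrac{1}{16}$, since \autoref{lemma:P3_lemma} chooses the vertex. Take $G = C(4,\ell)$ with $\ell$ large (this is not unicyclic: $m = n+1$). Every induced $P_3$ in $G$ contains a vertex $u$ whose deletion leaves either a bare path (when $u \in \{v_1, v_3\}$) or a disjoint union $C(4,i) \sqcup P_{\ell - i - 1}$ (when $u$ is an interior path vertex). In the first case $s^+(G-u) = n-2$; in the second your induction hypothesis yields only $s^+(C(4,i)) \ge i+4$, hence $s^+(G-u) \ge (i+4) + (\ell - i - 2) = n - 2$ as well. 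Either way the gain of $1 + \tfrac{1}{16}$ lands you at $n - \tfrac{15}{16}$. So these graphs fall into your ``residual'' bin, and your fallback---enumerate residuals and hit them with the gluing lemma---is asserted but not carried out: you neither identify the list nor show that gluing (rather than something simpler) is what is needed for each item.

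The paper's argument is structurally different and never invokes $P_3$-removal for this theorem. It deletes a maximum clique $K$, uses \autoref{lemma:claw_free_component} to bound the number of components of $G - V(K)$ by $\omega + 1$, and reads off $s^+(G) \ge (\omega-1)^2 + (n - 2\omega - 1)$ from super-additivity; this is already $\ge n$ once $\omega \ge 4$. For $\omega = 3$ a Ramsey argument forces $\Delta \le 5$, and the cases $\Delta \in \{3,4,5\}$ are finished by short direct computations. In particular $\Delta = 3$ pins $G$ down to $P(j,k,\ell)$ or $C(k,\ell)$, and $C(k,\ell)$ is handled by ordinary super-additivity (split off $C(k,0)$ and $P_\ell$) together with an interlacing bound on $\lambda_1$---not by gluing.
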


\begin{proof} We proceed by induction on $n$. The assertion can be easily verified when $n=4$. So, assume that $n\ge 5$. In what follows, we will often use Theorem \ref{thm:square_energy_partition} and the induction hypothesis without mention.

Since $G$ is claw-free and $\Delta(G)\ge 3$, we have $\omega(G)\ge 3$. Let $K$ denote a clique of order $\omega(G)$ in $G$. Assume $V(K)=\{v_1, \ldots, v_{\omega}\}$. Let $C_1, \ldots, C_{\ell}$ denote the components of $G-K$. If $\ell=0$, then $G\cong K_n$ and we are done. So assume $\ell\ge 1$. By Lemma \ref{lemma:claw_free_component}, $\ell\le \omega(G) + 1$. We have 
\begin{align*}
   s^+(G) & \ge s^+(K) + \sum_{i=1}^{\ell} s^+(C_i) \ge (\omega - 1)^2 + \sum_{i=1}^{\ell} (|V(C_i)| -1)\\
   & = (\omega - 1)^2 + (n-\omega - \ell) \ge (\omega - 1)^2 + (n-2\omega - 1)\\
   & = n + \omega^2 - 4\omega.
\end{align*}
If $\omega(G)\ge 4$, then $s^+(G)\ge n$. So assume that $\omega(G) =3$. Suppose there is a component $C_i$ such that $\Delta(C_i)\ge 3$ or $C_i$ is a cycle of length at most 4. Then
\[ s^+(G)\ge s^+(G-C_i) + s^+(C_i)\ge |V(G-C_i)| + |V(C_i)| = n.\]
So we can assume that for all $1\le i\le k$, either $C_i$ is a path or $C_i$ is a cycle of length at least 5. Now, if there is a vertex $u\in V(G)$ with $\deg(v)\ge 6$, then using Ramsey number $R(3,3)=6$, $G[N(v)]$ contains $K_3$ or $\overline{K_3}$. Since, $G$ is claw-free, $G[N(v)]$ contains a $K_3$ and so $\omega(G)\ge 4$, a contradiction. Thus, we may assume that $\Delta(G)\le 5$. We consider the following cases.

\textbf{Case 1:} $\Delta(G)=3$. ~
Suppose there is a vertex $u\in V(C_i)$ for some $i$ with $\deg_{C_i}(u)=2$ and $u$ has a neighbour (say $v_1$) in $K$. Since $G$ is claw-free and $C_i$ does not have a triangle, we see that $\deg_G(v_1)\ge 4$, a contradiction. It follows that all components of $G-K$ are paths, and every vertex of degree 3 in $G$ lies in $K$. We conclude that $G$ is either $P(j,k,\ell)$ or $C(k,\ell)$ for some non-negative integers $j,k,\ell$. If $G\cong P(j,k,\ell)$, then we are done by Theorem \ref{thm:triangle_with_paths}. 

So assume $G\cong C(k,\ell)$ for some $k\ge 4$ and $\ell \ge 0$. If $\ell = 0$, we have 
\[s^+(G)\ge s^+(K_3)+s^+(P_{k-3})=4+(k-4) = n.\] So suppose $\ell \ge 1$. Let $w$ denote the vertex in $G$ where a path of length $\ell$ is attached, and $u$ denote a neighbour of $w$ in the cycle of length $k$. If $k=4$, then 
\[s^+(G)\ge s^+(C(4,0)) + s^+(P_\ell)\ge 6.5 + (\ell - 1) > 4 + \ell = n.\]
If $k=5$, then 
\[s^+(G)\ge s^+(C(5,0)) + s^+(P_\ell)\ge 6.63 + (\ell - 1) > 5 + \ell = n.\]
If $k=6$, then 
\[s^+(G)\ge s^+(C(6,0)) + s^+(P_\ell)\ge 7.6 + (\ell - 1) > 6 + \ell = n.\]
Suppose $k\ge 7$. Then $P(1, 2, 2)$ is a subgraph of $G$, and so $\lambda_1^2(G)\ge \lambda_1^2(P(1,2,2))\ge 6.$ Since $G-u$ is a path, we have 
\[s^+(G)\ge s^+(G-u)-\lambda_1^2(G-u) + \lambda_1^2(G) \ge (k+\ell-2)- 4 +6 = k + \ell = n.\]

\textbf{Case 2:} $\Delta(G)=4$. ~
Let $u\in V(G)$ be such that $\deg(u)=4$ and $N(u) = \{u_1, \ldots, u_4\}.$ Let $D_1, \ldots, D_j$ denote the components of $G-N[u]$. If there is a vertex $u_i$ with neighbours $w_1, w_2$ lying in different components of $G-N[u]$, then $\{u, u_i, w_1, w_2\}$ induce a claw, a contradiction. Thus, for every vertex $u_i$, there is at most one component of $G-N[u]$ with a vertex adjacent to $u_i$. In particular, $j\le 4$. Since $G$ is claw-free and $\omega(G)=3$, the induced subgraph $G[N(u)]\in \{P_4, C_4, 2K_2\}$. We consider the following cases.

\textbf{Subcase 2.1:} $G[N(u)]\cong C_4$. ~ Then $G[N[u]]\cong K_5 - E(2K_2)$. We have 
\[ s^+(G) \ge s^+(G[N[u]) + \sum_{i=1}^j s^+(D_i) \ge 10 + (n-9) = n+1.\]

\textbf{Subcase 2.2:} $G[N(u)]\cong P_4$. ~ Then $G[N[u]]\cong K_5 - E(P_3)$. Without loss of generality, assume that $u_1\sim u_2\sim u_3\sim u_4$. We claim that $G-N[u]$ has at most three components, i.e., $j\le 3$. Suppose, to the contrary, that $ j=4$. Without loss of generality, we can assume that $u_i$ has a neighbour, say $w_i$, in $D_i$ for all $1\le i\le 4$. Then $u_1, u_2, u_3, w_2$ induce a claw, unless $w_2$ is adjacent to either $u_1$ or $u_3$. If $w_2\sim u_3$, then $\deg(u_3)\ge 5$, a contradiction. Hence, $w_2\sim u_1$. But then $u, u_1, w_1, w_2$ induce a claw, a contradiction. Thus, $j\le 3$. Now, we have 
\[ s^+(G) \ge s^+(G[N[v]]) + \sum_{i=1}^3s^+(D_i)\ge 8 + (n-8) = n.\]

\textbf{Subcase 2.3:} $G[N(u)]\cong 2K_2$. ~ Without loss of generality, let $u_1\sim u_2$ and $u_3\sim u_4$. First, suppose that $\Delta(G-u)\ge 3$. Then $G-u$ has a triangle, say $abc$. It is clear that $\{a,b,c\}$ cannot intersect both $\{u_1, u_2\}$ and $\{u_3, u_4\}$. Without loss of generality, assume that $\{a,b,c\}\cap \{u_3, u_4\}=\emptyset$. Let $C$ be the component of $G-\{u, u_3, u_4\}$ containing $abc$. Since $G$ is connected, both $C$ and $G-C$ are connected graphs with maximum degree at least 3 or isomorphic to a triangle. We conclude that 
\[ s^+(G)\ge s^+(C) + s^+(G-C)\ge n.\]
So, assume that $\Delta(G-u)\le 2$. Clearly, $G-u$ has at most two components. If $\deg_{G-u}(u_1)=\deg_{G-u}(u_2)=1$, then $G-\{u, u_1,u_2\}$ is either a path or a cycle. Thus 
\[ s^+(G)\ge s^+(G-\{u, u_1,u_2\}) + s^+(K_3)\ge (n-4) + 4=n.\] Similarly, if $\deg_{G-u}(u_3)=\deg_{G-u}(u_4)=1$, then the assertion holds. So we can assume that $G$ has $H$ as a subgraph, where $H$ is shown in Figure \ref{fig:Subcase1_2}. Then $\lambda_1(G)\ge \lambda_1(H)$, and thus 
\begin{align*}
    s^+(G) &\ge s^+(G-u) - \lambda_1^2(G-u) + \lambda_1^2(G) \\
    & \ge (n-3) - 4 + 7.2 \ge n.
\end{align*}

\textbf{Case 3:} $\Delta(G) =5$. ~ Let $u\in V(G)$ be such that $\deg(u)=5$. Since $G$ is claw-free and $\omega(G)=3$, we see that $G[N(u)]\cong C_5$, and so $G[N[u]]\cong K_6 - E(C_5)$. As in Case 2 above, one can argue that $G-N[u]$ has at most 5 components, say $D_1, \ldots, D_j$ ($j\le 5$). We have 
\[ s^+(G) \ge s^+(G[N[v]]) + \sum_{i=1}^5 s^+(D_j)\ge 12 + (n-11) > n. \qedhere\]
\end{proof}

\begin{figure}
\centering
\begin{tikzpicture}[scale=0.4]
\draw [line width=1pt] 
 (0,0)-- (-3,2)
 (-3,2)-- (-3,-2)
 (-3,-2)-- (0,0)
 (0,0)-- (3,2)
 (3,2)-- (3,-2)
 (3,-2)-- (0,0)
 (-3,2)-- (-6,2) 
 (3,2)-- (6,2);

\draw [fill=black]
 (0,0) circle (5pt)
 (-3,2) circle (5pt)
 (-3,-2) circle (5pt)
 (3,2) circle (5pt)
 (3,-2) circle (5pt)
 (-6,2) circle (5pt)
 (6,2) circle (5pt);
\end{tikzpicture}
    \caption{Graph $H$}
    \label{fig:Subcase1_2}
\end{figure}

\section{Graphs with domination number 1}
\label{section:domination_1}

It is easy to see that if $\gamma(G)=1$ for some graph $G$ of order $n$, then $\lambda^2_1(G) \geq n-1$. Using this fact and the super-additivity result (Theorem \ref{thm:square_energy_partition}), Zhang \cite{Zhang_2024_Extremal} proved the following.

\begin{theorem}[\cite{Zhang_2024_Extremal}]\label{thm:n-gamma}
    Let $G$ be a graph on $n$ vertices and domination number $\gamma(G)$. Then
    \[s^+(G)\geq n-\gamma(G) \ \text{ and }\  s^-(G)\geq n-\gamma(G).\]
\end{theorem}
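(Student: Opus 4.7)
The plan is to reduce the general case to the base case $\gamma(G) = 1$ via a natural partition of the vertex set, then invoke the super-additivity result (Theorem \ref{thm:square_energy_partition}).

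First, I would establish the base case: if $G$ has a dominating vertex $v$, then both $s^+(G) \geq n-1$ and $s^-(G) \geq n-1$ hold. For $s^+$ this is immediate---a unit test vector supported on $v$ and one neighbor gives $\lambda_1(G)^2 \geq \Delta(G) = n-1$, whence $s^+(G) \geq \lambda_1(G)^2 \geq n - 1$, which is precisely the fact highlighted in the excerpt before the theorem. The bound $s^-(G) \geq n - 1$ is more subtle, because there is no analogous Rayleigh estimate giving $|\lambda_n|^2 \geq n - 1$ (indeed $\lambda_n(K_n) = -1$). Here I would exploit the block structure
\[A(G) = \begin{pmatrix} 0 & \mathbf{1}^T \\ \mathbf{1} & A(G-v) \end{pmatrix},\]
together with the Pythagorean identity in Lemma \ref{lem:pythagorean} applied to $-A(G)$, which yields $s^-(G) = \inf_{M \succeq 0} \|A(G) - M\|_2^2$. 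Showing this infimum is at least $n - 1$ reduces to checking that no PSD matrix $M$ can match the all-ones row and column of $A(G)$ at $v$ while keeping the Frobenius distance small; the constraints $|1 + M_{vu}| \leq \sqrt{M_{vv}M_{uu}}$ coming from PSDness of $M$ force the $v$-row of $A(G) - M$ to carry total squared mass at least $n - 1$.

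Next, I would reduce the general case to the base case via a natural partition. Let $D = \{v_1, \ldots, v_\gamma\}$ be a minimum dominating set, so $V(G) = \bigcup_{i=1}^\gamma N[v_i]$. Partition $V(G) = V_1 \sqcup \cdots \sqcup V_\gamma$ by placing $v_i \in V_i$ and assigning every remaining vertex $u$ to some $V_i$ with $u \in N(v_i)$, breaking ties arbitrarily. By construction, every vertex of $V_i$ lies in $N[v_i]$, so $v_i$ is a dominating vertex of the induced subgraph $H_i := G[V_i]$, giving $\gamma(H_i) = 1$. The base case then yields $s^+(H_i) \geq |V_i| - 1$ and $s^-(H_i) \geq |V_i| - 1$ for each $i$.

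Finally, applying Theorem \ref{thm:square_energy_partition} to the disjoint induced subgraphs $H_1, \ldots, H_\gamma$ that cover $V(G)$ gives
\[s^+(G) \;\geq\; \sum_{i=1}^\gamma s^+(H_i) \;\geq\; \sum_{i=1}^\gamma (|V_i| - 1) \;=\; n - \gamma,\]
and an identical chain of inequalities for $s^-$. The main obstacle is the $s^-$ half of the base case: unlike the $s^+$ side, no single eigenvalue of $A(G)$ accounts for the full $n - 1$, so one must control the collective squared mass of the negative spectrum using the global structural constraint imposed by the all-ones row and column at the dominating vertex.
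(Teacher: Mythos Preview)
Your reduction to the base case via a dominating-set partition, followed by super-additivity (Theorem~\ref{thm:square_energy_partition}), is exactly the strategy the paper attributes to Zhang, and your handling of the $s^+$ base case via $\lambda_1(G)^2 \geq \Delta(G) = n-1$ is correct and matches the paper's one-line justification.

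The gap is in the $s^-$ base case. First, the constraint you write, ``$|1 + M_{vu}| \leq \sqrt{M_{vv}M_{uu}}$ coming from PSDness of $M$'', is not a consequence of $M \succeq 0$; positive semidefiniteness gives only $|M_{vu}| \leq \sqrt{M_{vv}M_{uu}}$, and there is no reason $|1 + M_{vu}|$ should be controlled. Second, and more seriously, the conclusion that ``the $v$-row of $A(G) - M$ carries total squared mass at least $n - 1$'' is false already for $G = K_n$. There the minimizer is $M = A^+(K_n) = \tfrac{n-1}{n}J$, and the $v$-row of $A - M$ has diagonal entry $-\tfrac{n-1}{n}$ and off-diagonal entries $\tfrac{1}{n}$, so the squared mass of the entire $v$-row and $v$-column together is
\[
\left(\tfrac{n-1}{n}\right)^2 + 2(n-1)\left(\tfrac{1}{n}\right)^2 \;=\; \tfrac{n^2-1}{n^2} \;<\; 1,
\]
nowhere near $n-1$. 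More generally, if one retains only the $v$-row, the $v$-column, and the diagonal of $A - M$, and imposes only the $2\times 2$ PSD constraints $M_{vu}^2 \leq M_{vv}M_{uu}$, a direct optimization shows the resulting lower bound on $\|A - M\|_F^2$ is of order $\sqrt{n}$, not $n-1$. A correct proof of $s^-(H) \geq |V(H)| - 1$ for graphs $H$ with a dominating vertex must therefore use the off-diagonal block of $A^-(H)$ on $V(H)\setminus\{v\}$ in an essential way; the entries touching $v$ alone do not carry enough mass.
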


In this section, we improve Theorem \ref{thm:n-gamma} for $s^+$ of connected graphs with a dominating vertex whenever $G$ is not a star.

\begin{theorem}\label{thm:domination_splus}
Let $G \neq K_{1,n-1}$ be a connected graph of order $n\ge 3$ with $\gamma(G)=1$. Then $s^+(G)\geq n$.
\end{theorem}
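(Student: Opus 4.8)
The plan is to let $v$ be a dominating vertex of $G$, so $v$ is adjacent to all other $n-1$ vertices. Since $G \neq K_{1,n-1}$, the graph $H = G - v$ has at least one edge. Write $G = K_1 \vee H$ as a join. The most natural first move is to split off the contribution of the dominating vertex: by interlacing (or directly from the Rayleigh quotient) one has $\lambda_1(G) \geq \sqrt{n-1}$, because the star $K_{1,n-1}$ is a spanning subgraph and has spectral radius $\sqrt{n-1}$; more precisely, the vector that is $\sqrt{n-1}$ on $v$ and $1$ elsewhere witnesses $\lambda_1(G)^2 \geq n-1$, but we will want something slightly stronger. The idea is to use Lemma \ref{lemma:spectral_sum} together with the edge(s) present in $H$ to show that $\lambda_1(G)^2 + \lambda_2(G)^2$ (or $\lambda_1(G)^2$ plus the positive square-energy coming from within $H$) already exceeds $n-1$ by a definite amount, and then feed the rest into Theorem \ref{thm:square_energy_partition}.

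Concretely, I would proceed as follows. First, observe $s^+(G) \geq \lambda_1(G)^2 + s^+(H)$ is \emph{false} in general, so instead use the cleaner route: by Theorem \ref{thm:square_energy_partition} applied with $H_1 = G$ itself there is nothing; rather, I would argue that $s^+(G) \geq \lambda_1(G)^2 + (\text{something})$. The key observation is that if $H$ has an edge $xy$, then $\{v,x,y\}$ induces a triangle, so $s^+(G) \geq s^+(K_3) + s^+(G - \{v,x,y\})$ only gives $4 + (n-3-\gamma') $ which is too weak. So the better approach is spectral: use Lemma \ref{lemma:spectral_sum} with a carefully chosen pair of orthonormal test vectors — one being (a normalization of) the all-ones-ish vector concentrated to exploit $\lambda_1 \geq \sqrt{n-1}$, and a second vector supported on $H$ chosen orthogonal to the first and aligned with an edge of $H$, yielding $y^T A(G) y > 0$. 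This shows $\lambda_1(G) + \lambda_2(G) > \sqrt{n-1}$, hence (since both are nonnegative when $\lambda_2 \geq 0$) we get $\lambda_1^2 + \lambda_2^2 \geq n - 1 + \delta$ for an explicit gain $\delta$ depending on how many edges $H$ has. Combining $\lambda_1^2 + \lambda_2^2$ with $s^+$ of the remaining eigenvalues (which is handled by interlacing against $H$, since $H = G - v$ is a principal submatrix and its positive eigenvalues interlace those of $G$) should close the gap to $n$.

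I expect the main obstacle to be the \textbf{borderline cases} where $H$ has very few edges — e.g. $H$ is a single edge plus isolated vertices (so $G$ is a "book"-like graph or $K_{1,n-1}$ with one extra edge), or $H$ is a matching. In these cases the spectral gain $\delta$ is smallest, and one must check the inequality $s^+(G) \geq n$ essentially exactly, possibly by computing the relevant eigenvalues of the join $K_1 \vee H$ explicitly via Theorem \ref{thm:join_spectrum} when $H$ is regular (e.g. $H = \frac{n-1}{2}K_2$) or by an ad hoc argument. A secondary obstacle is ensuring the second test vector in Lemma \ref{lemma:spectral_sum} can always be chosen both orthogonal to the first \emph{and} giving a strictly positive quadratic form; this should work because $H$ has an edge and the first test vector is (nearly) constant, so projecting an edge-indicator off it leaves a vector still correlated with that edge. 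I would also keep in mind the alternative, cleaner finish: since $G - v = H$ is a principal submatrix, $n^+(G) \geq n^+(H) $ plus possibly one more from $v$, and $\sum_{i \geq 2} \lambda_i(G)^2 \geq \sum_{i} \lambda_i(H)^2 \cdot(\text{sign considerations})$ via interlacing — combined with $\lambda_1(G)^2 \geq n-1$ this may already give $s^+(G) \geq (n-1) + s^+(H) - \lambda_1(H)^2 + \lambda_1(H)^2 \geq n$ once $H$ has an edge, modulo making the interlacing bookkeeping precise.
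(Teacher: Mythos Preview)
Your approach is genuinely different from the paper's, and while the opening observation $\lambda_1(G)^2 \geq n-1$ is shared, the proposal as written has a real gap in exactly the borderline cases you identify.

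The test-vector route via Lemma~\ref{lemma:spectral_sum} runs into trouble once $H = G - v$ is $2K_2$-free (if $H$ contains an induced $2K_2$ then interlacing already gives $\lambda_2(G) \geq 1$ and you are done). In the $2K_2$-free regime your second vector $y$, being supported on $H$ and orthogonal to the near-constant $x$, cannot reliably achieve $y^T A(G) y \geq 1$: for $H = K_2 \cup (n-3)K_1$ the optimal such $y$ gives only $y^T A y = (n-3)/(n-1)$, and one must then check a delicate inequality by hand. More importantly, the other borderline family you need --- $H$ a star plus isolated vertices --- is not regular, so Theorem~\ref{thm:join_spectrum} does not apply and your proposed fallback is unavailable. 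Finally, the ``alternative cleaner finish'' is not a valid argument: the displayed chain collapses to $s^+(G) \geq (n-1) + s^+(H)$, which you correctly flagged earlier as false in general, and interlacing does not give any inequality of the form $\sum_{i \geq 2, \lambda_i > 0} \lambda_i(G)^2 \geq s^+(H)$.

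The paper instead argues by induction on $n$. If $G - v$ contains a triangle $abc$, remove it and apply Theorem~\ref{thm:square_energy_partition} plus induction (since $G - \{a,b,c\}$ still has dominating vertex $v$). If $G - v$ contains an induced $2K_2$, use $\lambda_2(G) \geq 1$ exactly as you suggest. Otherwise $G - v$ is triangle-free and $2K_2$-free, hence a single star plus isolated vertices. A generic induced $P_3$ in that star is removed via Lemma~\ref{lemma:P3_lemma} and induction; the only obstruction is when deleting the $P_3$'s centre leaves a star, which forces $G$ itself to have the shape ``dominating $v$, one hub $b$ with some common neighbours, plus leaves of $v$''. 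For this residual family Lemma~\ref{lemma:inertia} gives $n^+(G) = 2$, and Lemma~\ref{lemma:two_positive_eigs} then yields $s^+(G) \geq m = n$ directly. This last structural step is precisely what disposes of your borderline cases uniformly, without any eigenvalue estimates.
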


\begin{proof}
We proceed by induction on $n$. Clearly, the statement is true when $n=3$. So, assume that the assertion is true for all non-star graphs on at most $n-1$ vertices with a dominating vertex. 

Let $v$ be a dominating vertex in $G$. If the graph $G-v$ has a triangle $abc$, note that $G-\{a, b, c\}$ still has a dominating vertex $v$. Then using Theorem \ref{thm:square_energy_partition} and the induction hypothesis, we get 
\[s^+(G) \geq s^+(K_3)+ s^+(G-\{a,b,c\}) \geq 4 + n-4 = n.\]

If $G-v$ has an induced $2K_2$, then by the Interlacing Theorem, $\lambda_2(G) \geq 1$, and so 
\[s^+(G) \geq \lambda_1^2(G) + \lambda_2^2(G) \geq n.\]

So we can assume that $G-v$ is triangle-free and $2K_2$-free. Since $G$ is not a star, $G-v$ has exactly one component, say $H$, of order at least $2$, and all other components are isolated vertices. Suppose $|V(H)|\ge 3$. Then $H$ has an induced path of length $3$, say $a,b,c$. Note that the graphs $G-a$ and $G-c$ are not stars. Moreover, if $G-b$ is not a star, then using Lemma \ref{lemma:P3_lemma} and the induction hypothesis, we have that $s^+(G) \geq n$. 

So, assume that $G-b$ is a star. Then any edge in $H$ is incident to $b$; i.e., $H$ is a star. It follows that every vertex in $V(G)\backslash \{v, b\}$ is either a common neighbour of both $v$ and $b$ or is a leaf attached to $v$. The graph $G$ has similar structure when $|V(H)|=2$. Now, observe that the four-vertex graph obtained by attaching a leaf to a triangle has two positive eigenvalues. Using Lemma \ref{lemma:inertia}, $G$ also has exactly two positive eigenvalues. By Lemma \ref{lemma:two_positive_eigs}, we get $s^+(G) \geq s^-(G)$. Since $G$ is not a star, $|E(G)|=n$ and so $s^+(G) \geq n$.
\end{proof}

\section{Graphs with diameter 2}\label{section:diam_2}

In this section, we prove Theorem \ref{thm:diameter_2_main}.  

\begin{theorem}
Let $G$ be a connected graph of order $n$ and $\diam(G)=2$. If $G\not \in \{K_{1,n-1}, C_5\}$, then $s^+(G)\geq n$.
\end{theorem}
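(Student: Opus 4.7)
\textit{Plan.} I would split into two cases based on whether $G$ has a dominating vertex.

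\textit{Case 1.} If $\gamma(G) = 1$, then since $G \neq K_{1,n-1}$, Theorem~\ref{thm:domination_splus} immediately yields $s^+(G) \geq n$.

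\textit{Case 2.} Assume $G$ has no dominating vertex. One observes that $\delta(G) \geq 2$, since a vertex of degree $1$ would force its unique neighbor to dominate. The goal in this case is the stronger bound $\lambda_1(G)^2 + \lambda_2(G)^2 \geq n$.

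First I would show that any diameter-$2$ graph $G$ satisfies $\lambda_1(G)^2 \geq n-1$. Since $\diam(G) \leq 2$ forces $(A+A^2)_{ij} \geq 1$ for every $i \neq j$, summing over off-diagonal pairs (and using $\operatorname{tr}(A) = 0$, $\operatorname{tr}(A^2) = 2m$) yields $\sum_i d_i^2 \geq n(n-1)$. Hofmeister's bound $\lambda_1(G)^2 \geq \tfrac{1}{n}\sum_i d_i^2$, obtained by applying Rayleigh's principle to $A^2$ with the all-ones vector (noting $\mathbb{1}^T A^2 \mathbb{1} = \|A\mathbb{1}\|^2 = \sum d_i^2$), then gives $\lambda_1(G)^2 \geq n-1$.

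For the second summand, if $G$ contains an induced $2K_2$, the Interlacing Theorem gives $\lambda_2(G) \geq \lambda_2(2K_2) = 1$, completing the proof. Otherwise $G$ is $2K_2$-free, equivalently $\overline{G}$ is $C_4$-free. The K\H{o}v\'ari--S\'os--Tur\'an (Reiman) bound then gives $|E(\overline{G})| \leq \tfrac{n}{4}(1 + \sqrt{4n-3})$, so $|E(G)| \geq \binom{n}{2} - O(n^{3/2})$ and $\lambda_1(G) \geq \tfrac{2|E(G)|}{n} \geq n - 1 - O(\sqrt{n})$; hence $\lambda_1(G)^2 \geq n$ for all sufficiently large $n$ (e.g.\ $n \geq 6$). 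The remaining small cases $n \leq 5$ are few and can be inspected by hand, where $C_5$ emerges as the unique exception, consistent with its exclusion in the hypothesis.

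The main obstacle is handling the $2K_2$-free subcase cleanly. While the K\H{o}v\'ari--S\'os--Tur\'an bound is sharp enough for large $n$, a uniform argument identifying $C_5$ as the sole exception without case-checking would require either a sharper structural classification of $2K_2$-free diameter-$2$ graphs without a dominating vertex, or a more refined spectral lower bound on $\lambda_1(G)^2$ that exploits the absence of induced $C_4$ in $\overline{G}$ directly.
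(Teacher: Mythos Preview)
Your overall architecture matches the paper's: reduce to the diameter-$2$ inequality $\lambda_1^2\ge n-1$ via $\sum d_i^2\ge n(n-1)$, dispose of the induced-$2K_2$ case by interlacing, and treat the $2K_2$-free case separately (the paper handles $\delta(G)=1$ via Theorem~\ref{thm:domination_splus}, exactly as in your Case~1). The divergence is entirely in the $2K_2$-free subcase, and there your argument has a genuine gap.

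The equivalence ``$G$ is $2K_2$-free $\Leftrightarrow$ $\overline G$ is $C_4$-free'' holds only for \emph{induced} $C_4$, whereas the Reiman/K\H{o}v\'ari--S\'os--Tur\'an bound requires $\overline G$ to have no $C_4$ \emph{subgraph}. These are very different: $G=K_{n/2,n/2}$ is $2K_2$-free, yet $\overline G=K_{n/2}\cup K_{n/2}$ has $\Theta(n^2)$ edges. So your bound $|E(\overline G)|=O(n^{3/2})$ is simply false, and with it the edge lower bound on $G$ and the conclusion $\lambda_1\ge 2m/n\ge \sqrt n$. More seriously, the target you aim for in this subcase, namely $\lambda_1(G)^2\ge n$, is not true even beyond $C_5$. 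Take $C_5$ on $\{1,\dots,5\}$ and add a twin of vertex $2$ (a new vertex adjacent to $1$ and $3$): this graph is $2K_2$-free, has diameter $2$, no dominating vertex, $n=6$, and one checks $\lambda_1\approx 2.39$, so $\lambda_1^2\approx 5.7<6$. Thus the $2K_2$-free case cannot be closed using $\lambda_1$ alone.

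The paper's remedy is to sharpen the Hofmeister-type inequality to $\lambda_1^2\ge n-1+\tfrac{2(2f+3t)}{n}$, where $t$ and $f$ count triangles and induced $C_4$'s, by revisiting the $P_3$-count. It then runs a short case analysis on $\delta(G)$: when $\delta\ge2$ one shows that almost every vertex lies in a triangle or an induced $C_4$, so $t+f\ge n/4$ and $\lambda_1^2\ge n$; in the residual situations (e.g.\ an induced $C_5$ is forced) one only gets $\lambda_1^2\ge n-0.5$ or $n-0.25$ and finishes with $\lambda_2,\lambda_3\ge \lambda_2(C_5)\approx 0.618$ via interlacing. So the paper uses up to three positive eigenvalues in the $2K_2$-free case, and that extra input is essential. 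Your proposal would need a replacement of comparable strength for this subcase.
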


\begin{proof} We can assume that $n\geq 4$. We count the number of induced $P_3$ in $G$ in two different ways. Since $\diam(G)=2$, any two non-adjacent vertices $u$ and $v$ have at least one common neighbour. Thus the number of induced $P_3$ in $G$ is at least ${n\choose 2} - m$, where $m$ is the size of $G$.

Now, for any $v_i\in V(G)$, the number of induced $P_3$ containing $v_i$ as the central vertex is at most ${\deg(v_i)}\choose 2$. Thus the number of induced $P_3$ in $G$ is at most $\sum_{i=1}^n {\deg(v_i)\choose 2}$. We have
\begin{equation}\label{eq:diameter2_1}
    {n\choose 2} -m\leq  \sum_{i=1}^n {\deg(v_i)\choose 2}= \frac{\sum_{i=1}^n \deg(v_i)^2}{2}-m.
\end{equation}
Using \eqref{eq:diameter2_1}, we have 
\begin{equation}\label{eq:diameter2_2}
  \lambda_1^2(G)\geq \frac{j^TA(G)^2j}{n}=\frac{(A(G)j)^T(A(G)j)}{n}=\frac{\sum_{i=1}^n \deg(v_i)^2}{n}\geq n-1,  
\end{equation}
where $j$ is the all-one vector. If $G$ has $2K_2$ as an induced subgraph, then by the Interlacing Theorem, $\lambda_2\geq 1$. This gives $s^+(G)\geq \lambda_1^2(G) + \lambda_2^2(G) \ge (n-1)+1=n,$ as desired. 

So, in what follows, assume that $G$ is $2K_2$-free. It is not hard to see that for any induced $C_4$ in $G$, one can add 2 units to the left side of \eqref{eq:diameter2_1}. Also, for each triangle in $G$, one can reduce the right side of \eqref{eq:diameter2_1} by 3 units. Let $t$ and $f$ denote the number of induced $C_3$ and $C_4$ in $G$, respectively. Using \eqref{eq:diameter2_1} and \eqref{eq:diameter2_2}, we get
\begin{equation}\label{eq:diameter2_3}
    \lambda_1^2(G)\geq \frac{\sum_{i=1}^n \deg(v_i)^2}{n}\geq n-1 + \frac{2(2f+3t)}{n}.
\end{equation}
In particular, if $f+t\geq \frac{n}{4}$, then $\lambda_1\geq \sqrt{n}$ which implies $s^+(G)\geq n$, as desired. We consider the following cases for the minimum degree $\delta(G)$:

\textbf{Case 1:} $\delta(G)=1$. ~ Let $u\in V(G)$ be such that $\deg(u)=1$ and let $v$ be the unique neighbour of $u$ in $G$. Since $\diam(G)=2$, $v$ is a dominating vertex in $G$. By Theorem \ref{thm:domination_splus}, the assertion holds.

\textbf{Case 2:} $\delta(G)=2$. ~ Let $v\in V(G)$ be such that $\deg(v)=2$ and let $N(v)=\{u, w\}.$ Define 
\[X=(N(u)\cap N(w))-\{v\}, \quad Y= N(u)\backslash (X\cup \{v\}), \quad Z=N(w)\backslash (X\cup \{v\}).\] We consider the following subcases:

\textbf{Subcase 2.1:} $u\nsim w$. ~ Since $G$ has no induced $2K_2$, every vertex in $Y$ is adjacent to every vertex in $Z$. If $Y=\emptyset$, then since $\diam(G)=2$, every vertex in $Z$ has at least one neighbour in $X$. This implies that every vertex of $G$ is contained in a triangle or an induced $C_4$. Thus $t+f\geq \frac{n}{4}$, and the assertion holds. We can argue similarly if $Z=\emptyset$. So we can assume that both $Y$ and $Z$ are non-empty. If $|Y|\geq 2$ or $|Z|\geq 2$, then every 
vertex of $G$ is contained in a triangle or induced $C_4$, and so $t+f\geq \frac{n}{4}$, as desired. The only remaining case is $|Y|=|Z|=1$. In this case, since $G\ncong C_5$, $X$ is non-empty and so  the number of induced $C_4$ is at least $\frac{n-2}{4}$. Using \eqref{eq:diameter2_3}, we get $\lambda_1^2(G)\geq n-0.5$. Moreover, $G$ contains an induced $C_5$ which implies $\lambda_2(G)\geq 0.6$, $\lambda_3(G)\geq 0.6$ by the Interlacing Theorem. We have $s^+(G)\geq (n-0.5)+ 0.36 + 0.36> n.$
 
\textbf{Subcase 2.2:} $u\sim w$. ~ Since $\delta(G)\ge 2$, every vertex of $G$ is contained in a triangle or an induced $C_4$. Thus, we have $t+f\geq \frac{n}{4}$, and so the assertion holds.

\textbf{Case 3:} $\delta(G)\geq 3$. ~ If every vertex is contained in a triangle or an induced $C_4$, then $t+f\ge \frac{n}{4}$ and we are done. So, suppose there exists $v\in V(G)$ such that $v$ is contained neither in a triangle nor in an induced $C_4$. If $u, w\in N(v)$, then $N(u)\cap N(w)=\{v\}$. Since $G$ is $2K_2$-free, every vertex in $N(u)-\{v\}$ is adjacent to each vertex of $N(w)-\{v\}.$ Moreover, $\delta(G)\ge 3$ implies that
 each vertex except $v$ of $G$ is contained in a triangle or an induced $C_4$. Thus
 $t+f\geq \frac{n-1}{4}$, and so by \eqref{eq:diameter2_3}, $\lambda_1^2(G)\ge n-0.25$. Now, since $G$ has an induced $C_5$ as a subgraph, $\lambda_2(G)\geq 0.6$ by the Interlacing Theorem. We have
 $s^+(G)\geq (n-0.25)+ 0.36> n$, and the proof is complete.
\end{proof}

We also present a lower bound for $s^-$ of graphs with diameter 2.

\begin{proposition}
If $G$ is a connected graph of order $n$ and diameter $2$, then $s^-(G) \geq n -  O(\sqrt{n \log n})$.
\end{proposition}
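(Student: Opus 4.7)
The plan is to reduce the problem to a bound on the domination number via Theorem~\ref{thm:n-gamma}, which states $s^-(G) \geq n - \gamma(G)$. It therefore suffices to show that every connected graph $G$ of order $n$ and diameter $2$ satisfies $\gamma(G) = O(\sqrt{n \log n})$, which I would prove by splitting into two cases based on $\delta(G)$.

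In the low-degree regime, if $\delta(G) \leq \sqrt{n \log n}$, let $v$ be a vertex of minimum degree. Since $\mathrm{diam}(G) = 2$, every vertex outside $N[v]$ has a common neighbour with $v$, which necessarily lies in $N(v)$. Therefore $\{v\} \cup N(v)$ is a dominating set of size at most $1 + \sqrt{n \log n}$.

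In the high-degree regime, if $\delta(G) > \sqrt{n \log n}$, I would apply the standard probabilistic domination argument of Arnautov--Payan. Sample each vertex independently with probability $p = \frac{2 \log n}{\delta(G) + 1}$ to form a set $S$, and let $T$ consist of the vertices not dominated by $S$. Then $S \cup T$ is always a dominating set, and its expected size satisfies
\[\mathbb{E}[\,|S| + |T|\,] \leq p n + n(1-p)^{\delta(G) + 1} \leq p n + n\, e^{-2 \log n} = O\!\left(\sqrt{n \log n}\right),\]
so some outcome realizes a dominating set of size $O(\sqrt{n \log n})$.

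Combining the two cases gives $\gamma(G) = O(\sqrt{n \log n})$, which together with Theorem~\ref{thm:n-gamma} yields the claim. I do not anticipate any significant obstacle: both ingredients are standard, and the diameter-$2$ hypothesis enters only through the closed-neighbourhood observation in the low-degree case. The only arithmetic subtlety is the patching of the two regimes, which works cleanly because the threshold $\delta \approx \sqrt{n \log n}$ equalizes the two bounds up to constants.
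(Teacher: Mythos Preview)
Your proposal is correct and follows exactly the same route as the paper: reduce to Theorem~\ref{thm:n-gamma} and bound $\gamma(G)$ by $O(\sqrt{n\log n})$ for diameter-$2$ graphs. The only difference is that the paper simply cites this domination bound from the literature, whereas you supply the standard two-case proof (closed neighbourhood of a low-degree vertex versus the Arnautov--Payan probabilistic argument); both are fine.
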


\begin{proof}
In \cite{dubickas_diam_2_dom} it is shown that $\gamma(G) = O(\sqrt{n \log n})$ for a graph $G$ with diameter $2$. The assertion now follows by Theorem \ref{thm:n-gamma}.
\end{proof}

Combining the above results gives Theorem \ref{thm:diameter_2_main}.

\section{Graphs with domination number 2}\label{section:domination_2}

In what follows, we prove that $s^+(G) \ge n-1$ for connected graphs of order $n$ with $\gamma(G)=2$. We first prove a lemma.

\begin{lemma}\label{lemma:domination=2_eig}
    Let $G$ be a graph of order $n$ with $\gamma(G)=2$. Then $\lambda_1^2(G)+ \lambda^2_2(G)\ge n-2$. 
\end{lemma}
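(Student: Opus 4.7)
The plan is to construct a 2-dimensional test subspace $W \subset \mathbb{R}^{V(G)}$ built from unit top eigenvectors of the two halves of a partition of $V(G)$ induced by the dominating pair, and then to exploit the min-max characterization of eigenvalues applied to $A(G)|_W$. First, I set up the partition: picking a dominating pair $\{u,v\}$, I let $V_1 := \{u\} \cup (N(u) \setminus N[v])$ and $V_2 := N[v] \setminus \{u\}$. A direct check shows $V(G) = V_1 \sqcup V_2$, and that $u$ is a dominating vertex of $G_1 := G[V_1]$ while $v$ is a dominating vertex of $G_2 := G[V_2]$. Testing the Rayleigh quotient of $A(G_i)^2$ at the indicator vector of the dominating vertex gives $\lambda_1(G_i)^2 \ge |V_i| - 1$, so that
\[\lambda_1(G_1)^2 + \lambda_1(G_2)^2 \ge (|V_1| - 1) + (|V_2| - 1) = n - 2.\]

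Second, I extend unit top eigenvectors of $A(G_1), A(G_2)$ by zeros to vectors $x_1, x_2 \in \mathbb{R}^{V(G)}$; these are orthonormal because $V_1 \cap V_2 = \emptyset$. The matrix of $A(G)|_W$ in the basis $x_1, x_2$, where $W := \mathrm{span}(x_1, x_2)$, is
\[T = \begin{pmatrix} \lambda_1(G_1) & c \\ c & \lambda_1(G_2) \end{pmatrix}, \qquad c := x_1^T A(G) x_2,\]
with eigenvalues $\theta_1 \ge \theta_2$. By Cauchy interlacing (the min-max principle applied to the 2-dimensional subspace $W$), $\lambda_i(G) \ge \theta_i$ for $i = 1, 2$. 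A direct computation gives
\[\theta_1^2 + \theta_2^2 = \mathrm{tr}(T^2) = \lambda_1(G_1)^2 + \lambda_1(G_2)^2 + 2c^2 \ge n - 2.\]

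Finally, I case-split on the sign of $\theta_2$. If $\theta_2 \ge 0$, then $\lambda_i(G) \ge \theta_i \ge 0$ forces $\lambda_i(G)^2 \ge \theta_i^2$, so $\lambda_1(G)^2 + \lambda_2(G)^2 \ge \theta_1^2 + \theta_2^2 \ge n - 2$. If $\theta_2 < 0$, then from $\theta_1 + \theta_2 = \lambda_1(G_1) + \lambda_1(G_2) \ge 0$ I get $\theta_1 > \lambda_1(G_1) + \lambda_1(G_2) \ge 0$, so
\[\lambda_1(G)^2 \ge \theta_1^2 > (\lambda_1(G_1) + \lambda_1(G_2))^2 \ge \lambda_1(G_1)^2 + \lambda_1(G_2)^2 \ge n - 2,\]
and $\lambda_1(G)^2$ alone suffices. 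The main subtlety is precisely this second case: a large cross-term $c$ drives $\theta_2$ below zero, preventing us from directly lower-bounding $\lambda_2(G)^2$ by $\theta_2^2$; the saving grace is that the same large $c$ forces $\theta_1$, and hence $\lambda_1(G)$, to be large enough that the $\lambda_2^2$ term is not needed.
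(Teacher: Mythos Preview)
Your proof is correct and follows the same overall strategy as the paper: partition $V(G)$ into two pieces each carrying a dominating vertex, take unit top eigenvectors of the two induced subgraphs as orthogonal test vectors, and push the resulting inequality through to $\lambda_1^2(G)+\lambda_2^2(G)$. The partitions differ cosmetically (the paper puts $N(u)\cap N(v)$ on the $u$-side, you put it on the $v$-side), but both work.

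The difference lies in the finishing step. The paper invokes Lemma~\ref{lemma:spectral_sum} to get $\lambda_1(G)+\lambda_2(G)\ge \lambda_1(G_1)+\lambda_1(G_2)$, combines this with $\lambda_1(G)\ge \max_i \lambda_1(G_i)$ to obtain weak majorization, and then cites Theorem~\ref{thm:majorization} for the squared inequality. You instead form the $2\times 2$ compression $T$, use Cauchy interlacing to get $\lambda_i(G)\ge \theta_i$, compute $\theta_1^2+\theta_2^2=\mathrm{tr}(T^2)$ directly, and case-split on the sign of $\theta_2$. Your route is more elementary---it needs only interlacing and a trace identity rather than the majorization machinery---and it explicitly disposes of the possibility $\theta_2<0$ (equivalently $\lambda_2(G)<0$), a case where the hypothesis $x,y\in\mathbb{R}^n_{\ge 0}$ of Theorem~\ref{thm:majorization} is not literally satisfied and the paper's argument would need a one-line patch.
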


\begin{proof}
Let $\{u,v\}$ be a dominating set in $G$ and define $A= N[u]-\{v\}$, and $B= V(G) - A$. Now, if $B = \{v\}$, then the graph $G[A]$ has a dominating vertex $u$. Using Theorem \ref{thm:n-gamma}, we have 
\[\lambda_1^2(G) \geq \lambda^2_1(G[A]) \geq |V(G[A])| -1 = n-2,\]
and the assertion holds. Hence, we can assume that $\{v\}$ is a proper subset of $B$. Let $x,y$ be unit Perron vectors of $G[A]$ and $G[B]$, respectively. Extend $x$ and $y$ appropriately by adding zeroes so that $x,y\in \mathbb{R}^n$ and $x^Ty=0$. Using Lemma \ref{lemma:spectral_sum}, we have
\[\lambda_1(G) + \lambda_2(G) \geq \lambda_1(G[A])+ \lambda_1(G[B]).\]
Since, $\lambda_1(G) \geq \lambda_1(G[A])$, this implies that $(\lambda_1(G[A]),\lambda_1(G[B])) \prec_w (\lambda_1(G),\lambda_2(G))$. By Theorem \ref{thm:majorization},
\[\lambda^2_1(G) + \lambda^2_2(G) \geq \lambda^2_1(G[A])+ \lambda^2_1(G[B]).\]
As the graph $G[A]$ has a dominating vertex $u$ and the graph $G[B]$ has a dominating vertex $v$, we have 
\[\lambda^2_1(G[A]) \geq |V(G[A])| -1 \text{ and }\lambda^2_1(G[B]) \geq |V(G[B])| -1,\]
using Theorem \ref{thm:n-gamma}. We conclude that $\lambda_1^2(G) + \lambda_2^2(G)\ge n$. 
\end{proof}

We define a special family of graphs with domination number 2. Let $H(k, \ell)$ be the graph obtained by attaching $k$ and $\ell$ leaves at vertices $v_1$ and $v_3$ of a 5-cycle $v_1v_2v_3v_4v_5$, respectively. Clearly, $|V(H(k,\ell))|=k+\ell+5$. 

\begin{proposition}\label{prop:pentagon} Let $n=k+\ell+5$, where $k\ge \ell\ge 0$. Then $s^+(H(k,\ell))\ge n-1$. 
\end{proposition}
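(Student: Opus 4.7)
The plan is to prove $s^+(H(k,\ell))\ge n-1$ by combining Lemma~\ref{lemma:domination=2_eig} with a refined Gram-matrix argument built on a natural pair of Perron vectors, supplemented by a Cauchy-interlacing bound coming from the induced $C_5$.

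I would first apply Lemma~\ref{lemma:domination=2_eig} with the dominating set $\{v_1, v_3\}$. Setting $A = N[v_1]\setminus\{v_3\} = \{v_1, v_2, v_5\}\cup\{p_1^1, \ldots, p_1^k\}$ and $B = V(G)\setminus A = \{v_3, v_4\}\cup\{p_3^1, \ldots, p_3^\ell\}$, the induced subgraphs $G[A]\cong K_{1, k+2}$ and $G[B]\cong K_{1, \ell+1}$ are stars, which gives the baseline $\lambda_1^2(G)+\lambda_2^2(G) \ge (k+2)+(\ell+1) = n-2$. To close the remaining gap of one, I would sharpen the majorization step from the proof of Lemma~\ref{lemma:domination=2_eig}. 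Let $x, y$ be the unit Perron vectors of $G[A]$ and $G[B]$ extended by zero to $V(G)$, so they are orthogonal. The Courant--Fischer characterization applied to the two-dimensional subspace $\mathrm{span}(x, y)$ yields the stronger inequality $\lambda_1^2(G)+\lambda_2^2(G)\ge\|M\|_F^2$, where $M = \bigl(\langle x_i, A(G) x_j\rangle\bigr)_{i,j=1,2}$ is the $2\times 2$ Gram matrix. Expanding gives $\lambda_1^2+\lambda_2^2\ge(k+2)+(\ell+1)+2(x^T A y)^2$, where the cross term is entirely due to the two edges $v_2 v_3$ and $v_4 v_5$ crossing the partition $A\sqcup B$. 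A direct computation from the explicit Perron vectors yields $x^T A y = \tfrac{1}{2\sqrt{k+2}}\bigl(1+\tfrac{1}{\sqrt{\ell+1}}\bigr)$, which evaluates to $1/\sqrt{2}$ precisely when $k=\ell=0$, giving the extra $1$ that verifies the base case $G = C_5$ on the nose.

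For $(k,\ell)\ne(0,0)$ the cross term alone is strictly less than $1$, so I would supplement it with a third positive-eigenvalue bound obtained by Cauchy interlacing on the induced $C_5$ on $\{v_1,\ldots,v_5\}$, yielding $\lambda_3^2(G)\ge\lambda_3^2(C_5)=(3-\sqrt{5})/2$. Combining the refined two-eigenvalue bound with this third-eigenvalue contribution should resolve the case for small $k+\ell$. The main obstacle I anticipate is the regime in which $k$ is large relative to $\ell$: there, the cross term shrinks like $1/k$ while the interlacing contribution remains fixed, so the combined estimate falls short of $n-1$. To handle this regime, I plan to induct on $k+\ell$ using the $P_3$-removal Lemma~\ref{lemma:P3_lemma} applied to the induced $P_3$ on $\{p_1^1, v_1, v_2\}$ for $k\ge 1$: removal of $p_1^1$ recurses directly on $H(k-1, \ell)$; removal of $v_2$ produces a bipartite caterpillar tree (path $v_1 v_5 v_4 v_3$ with $k$ leaves at $v_1$ and $\ell$ leaves at $v_3$) whose positive energy equals $m = n-2$, so the $1+\tfrac{1}{16}$ improvement in the lemma more than suffices. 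The delicate case is when the lemma selects the hub $v_1$, since $G-v_1$ decomposes as $T^{(2)}_\ell\sqcup k K_1$ with $s^+ = \ell+3$; for that case I would combine the refined Gram-matrix bound above with the interlacing estimates and, if needed, symmetrically apply the $P_3$-removal argument using $\{p_3^1, v_3, v_4\}$ when $\ell\ge 1$ to obtain a complementary inequality that, together with the sharpened spectral bound, closes the gap.
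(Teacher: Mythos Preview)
Your Gram-matrix refinement is valid (once one checks both eigenvalues of $M$ are positive, which they are), but the numbers do not close.  The cross term satisfies
\[
2(x^T A y)^2=\frac{1}{2(k+2)}\Bigl(1+\tfrac{1}{\sqrt{\ell+1}}\Bigr)^2,
\]
which is already below $0.5$ at $(k,\ell)=(1,1)$ and decays like $1/k$.  Adding the interlacing contribution $\lambda_3^2(G)\ge(3-\sqrt5)/2\approx 0.382$, or even the sharper $\lambda_3^2(H(1,1))\approx 0.504$, gives $s^+(G)\ge n-2+2c^2+\lambda_3^2$, which is below $n-1$ for every $(k,\ell)$ with $k+\ell\ge 2$.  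So the spectral part of your plan handles only $C_5$ and $H(1,0)$.

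Your inductive fallback via the $P_3$-removal lemma does not fill this gap.  The lemma guarantees only that \emph{some} vertex of the chosen $P_3$ works; you cannot exclude that it selects the hub $v_1$ from $\{p_1^1,v_1,v_2\}$ and simultaneously the hub $v_3$ from $\{p_3^1,v_3,v_4\}$.  In that doubly bad scenario the two conclusions are $s^+(G)\ge \ell+4+\tfrac1{16}$ and $s^+(G)\ge k+4+\tfrac1{16}$, and neither reaches $k+\ell+4=n-1$ once $k,\ell\ge 1$.  For instance at $(k,\ell)=(10,10)$ your best spectral bound is about $n-1.43$ and your best $P_3$ bound is $14.06$, both far short of $n-1=24$.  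The unspecified ``complementary inequality'' cannot repair this.

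The paper's proof sidesteps the problem by choosing a different two-dimensional test space.  Rather than the star partition, it deletes the single edge $v_4v_5$ to obtain a spanning tree $T$; since $\alpha'(T)=2$, Theorem~\ref{thm:tree_matching} forces $T$ to have exactly two positive eigenvalues, so $\lambda_1^2(T)+\lambda_2^2(T)=s^+(T)=n-1$ \emph{exactly}.  A majorization argument with the first two eigenvectors of $T$ then yields $\lambda_1^2(G)+\lambda_2^2(G)\ge n-1-O(n^{-1/2})$, and this small loss is absorbed by $\lambda_3^2(G)\ge\lambda_3^2(H(1,1))\approx 0.5$ for $n\ge 160$, with smaller $n$ handled by computer.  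The key idea you are missing is to start from a baseline of $n-1$ rather than $n-2$, so that the fixed interlacing gain suffices.
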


\begin{proof}
Assume $n\ge 160$; for smaller $n$, the assertion is verified using a computer. Let $G=H(k,\ell)$. If $k=0$ or $\ell=0$, then one can find an induced $P_3$ in $G$, no vertex of which is a cut-vertex. Using Lemma \ref{lemma:P3_lemma} and the induction hypothesis, we see that $s^+(G)\ge n-1$. So assume that $k\ge 1$ and $\ell\ge 1$. Let $T$ be the tree obtained by deleting the edge $v_4v_5$ from $G$. By the Interlacing Theorem, observe that
\[ \lambda_2(T)\le \lambda_1(T - v_2) = \sqrt{k+1}.\]
 Let $x$ and $y$ denote the unit positive $\lambda_1$-eigenvector and a unit $\lambda_2$-eigenvector of $T$, respectively. Furthermore, assume that $y_{v_1}$ is non-negative. By the eigenvalue equation, we get 
\[\lambda_2(T)y_{v_1} = \frac{(k+1)y_{v_1}}{\lambda_2(T)} + y_{v_2}.\]
Since $k\ge \frac{n-5}{2}$, we have
\begin{align*}
  0\le y_{v_5}&=\frac{y_{v_1}}{\lambda_2(T)} = \frac{\lambda_2(T) y_{v_1} - y_{v_2}}{k+1} \\
  & \le \frac{\lambda_2(T)+1}{k+1} \le  \frac{\sqrt{k+1} + 1}{k+1} \le \frac{\sqrt{2(n-3)}+2}{n-3}. 
\end{align*}
Using Lemma \ref{lemma:spectral_sum}, we get
\begin{align*}
    \lambda_1(G) + \lambda_2(G) \ge \Big(\lambda_1(T) +2x_{v_4}x_{v_5}\Big) + \Big(\lambda_2(T) +2y_{v_4}y_{v_5}\Big).
\end{align*}
Also, since $\lambda_1(G) \ge \lambda_1(T) +2x_{v_4}x_{v_5}$, we have 
\[(\lambda_1(T) +2x_{v_4}x_{v_5}, \lambda_2(T) +2y_{v_4}y_{v_5}) \prec_w (\lambda_1(G),\lambda_2(G)).\] 
By Theorem \ref{thm:majorization} and choosing $p=2$, we get 
\[
    \lambda^2_1(G) + \lambda^2_2(G) \ge \lambda^2_1(T) +4\lambda_1(T)x_{v_4}x_{v_5}+ 4x^2_{v_4}x^2_{v_5} + \lambda^2_2(T) +4\lambda_2(T)y_{v_4}y_{v_5}+ 4y^2_{v_4}y^2_{v_5}.
\]
Since the $\alpha'(T)=2$, by Theorem \ref{thm:tree_matching}, $T$ has exactly two positive eigenvalues. Thus, 
\begin{equation}
\lambda_1^2(T)+\lambda_2^2(T)= n-1.
\end{equation}
Noting that $\lambda_2(T)y_{v_4} = y_{v_3}$ and $x_{v_4}, x_{v_5}\ge 0$, we have
\begin{align*}
    \lambda^2_1(G) + \lambda^2_2(G) &\geq n-1 +  4\lambda_2(T)y_{v_4}y_{v_5} \\
    &= n-1 +4y_{v_3}y_{v_5} \\
    &\ge n-1 -4|y_{v_5}| \\
    &\ge n-1 -\bigg(\frac{4\sqrt{2(n-3)}+8}{n-3}\bigg).
\end{align*}
Since $H(1,1)$ is an induced subgraph of $G$, by the Interlacing Theorem, we have 
\begin{equation}
    \lambda_3(G)\ge \lambda_3(H(1,1))\ge 0.71.
\end{equation}
Thus, we get
\begin{align*}
    \lambda_1^2(G) + \lambda_2^2(G) + \lambda_3^2(G) \ge (n-1) -\bigg(\frac{4\sqrt{2(n-3)}+8}{n-3}\bigg) + 0.71^2  \ge n-1,  
\end{align*}
whenever $n\ge 160$. This completes the proof.
\end{proof}

Now, we are ready to improve Theorem \ref{thm:n-gamma} for $s^+$ of graphs with two dominating vertices.

\begin{theorem} Let $G$ be a connected graph of order $n$ such that $\gamma(G)=2$. Then $s^+(G)\geq n-1.$
\end{theorem}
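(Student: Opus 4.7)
My plan is to consider a minimum dominating set $\{u,v\}$ of $G$ and use the decomposition $A=N[u]\setminus\{v\}$, $B=V(G)\setminus A$ from the proof of Lemma~\ref{lemma:domination=2_eig}. Then $u$ dominates $G[A]$, $v$ dominates $G[B]$, and $|A|,|B|\geq 2$ (otherwise $u$ or $v$ alone would dominate $G$, contradicting $\gamma(G)=2$).

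The \textbf{easy case} is when at least one of $G[A]$, $G[B]$ is not a star. Say $G[A]\not\cong K_{1,|A|-1}$. Theorem~\ref{thm:domination_splus} applied to $G[A]$ (whose domination number is $1$) gives $s^+(G[A])\geq |A|$, while Theorem~\ref{thm:n-gamma} applied to $G[B]$ gives $s^+(G[B])\geq |B|-1$. Super-additivity (Theorem~\ref{thm:square_energy_partition}) then yields $s^+(G)\geq |A|+|B|-1=n-1$, and similarly if $G[B]$ is not a star.

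The \textbf{hard case} is when both $G[A]\cong K_{1,k}$ and $G[B]\cong K_{1,\ell}$ are stars centered at $u$ and $v$, with $k,\ell\geq 1$. Writing $A=\{u,a_1,\ldots,a_k\}$, $B=\{v,b_1,\ldots,b_\ell\}$, the leaves of each star are independent, $u\not\sim b_j$ for all $j$, and the only additional edges are $uv$, the edges $va_i$ for $a_i\in X:=N(v)\cap\{a_i\}_i$, and edges $a_ib_j$. If $G$ is bipartite, connectedness gives $s^+(G)=m(G)\geq n-1$ immediately, so assume $G$ is non-bipartite. If some $a\in X$ is adjacent to every $b_j$, then $\{u,a\}$ is also a minimum dominating set, and the induced subgraph on $\{v,a,b_1,\ldots,b_\ell\}$ is $K_2\vee\overline{K_\ell}$, which is not a star, so the easy case applied to this new decomposition gives $s^+(G)\geq n-1$. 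Otherwise, every $a\in X$ misses some $b_j$, producing an induced $P_3$ on $\{a,v,b_j\}$; I would then combine Lemma~\ref{lemma:P3_lemma} with induction on $n$, which beats the inductive bound $s^+(G-w)\geq n-2$ by the additive $1+\tfrac{1}{16}$, provided $\gamma(G-w)\leq 2$ for each candidate vertex $w$ in the $P_3$.

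The main obstacle is that the $P_3$-removal step can fail precisely when the removed vertex is $v$ and $\gamma(G-v)\geq 3$, which happens exactly when no vertex of $A$ is adjacent to every $b_j$ in $B\setminus\{v\}$. Under all of the accumulated structural constraints, such a $G$ should essentially be forced to be a pentagon with pendant leaves attached at two non-adjacent pentagon vertices, i.e., $G\cong H(k',\ell')$ for some $k',\ell'\geq 0$, and then Proposition~\ref{prop:pentagon} completes the proof. Justifying this last structural reduction --- showing that every stubborn non-bipartite ``both-stars'' graph must coincide with some $H(k,\ell)$ --- is the most delicate combinatorial step and the part I expect to require the most care.
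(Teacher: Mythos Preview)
Your overall plan parallels the paper's: an easy case via super-additivity and Theorem~\ref{thm:domination_splus}, followed by structural analysis and $P_3$-removal in the hard case, with Proposition~\ref{prop:pentagon} as the endgame. However, the final structural reduction is not correct as stated, and the gap stems from an asymmetry in your decomposition. The paper partitions $V(G)\setminus\{u,v\}$ into three pieces $A'=N(u)\setminus N[v]$, $B'=N(u)\cap N(v)$, $C'=N(v)\setminus N[u]$ and disposes of \emph{both} ``$G[A'\cup B']$ has an edge'' and ``$G[B'\cup C']$ has an edge'' in the easy case. Your two-part split catches the first of these but not edges between $X=B'$ and the $b_j$'s, so your hard case is strictly larger. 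Take $V=\{u,v,a,a_1,b_1,b_2\}$ with edges $ua,\,ua_1,\,va,\,vb_1,\,vb_2,\,ab_1$: here $\{u,v\}$ is a minimum dominating set, both $G[\{u,a,a_1\}]$ and $G[\{v,b_1,b_2\}]$ are stars, $G$ is non-bipartite (triangle $vab_1$), the unique $a\in X$ misses $b_2$, and $\gamma(G-v)=3$. This lands squarely in your ``stubborn'' class, yet it has girth~$3$ and is not any $H(k',\ell')$. Worse, $P_3$-removal on $\{a,v,b_2\}$ can fail at $a$ as well as at $v$: $G-a$ is disconnected with $s^+(G-a)=3<n-2$, so the $1+\tfrac{1}{16}$ gain is insufficient. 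Your analysis of the failure modes only tracks the vertex $v$.

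The paper sidesteps these issues by (i) using the symmetric three-part easy case, which immediately eliminates $B'$--$C'$ edges like $ab_1$; (ii) choosing the induced $P_3$ entirely inside $A'\cup C'$, so that $\{u,v\}$ continues to dominate after any single removal and connectivity is easy to check; and (iii) finishing the residual $A'$--$C'$ matching structure with a short case analysis, several branches of which invoke Lemma~\ref{lemma:domination=2_eig} together with interlacing for $\lambda_3$, a tool absent from your outline. Only one branch (namely $u\nsim v$, $|B'|=1$, and a single $A'$--$C'$ matching edge) actually reduces to $H(k,\ell)$.
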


\begin{proof}
We proceed by induction on $n$. Clearly, the assertion is true when $n=3$. So, assume the assertion is true for all graphs on at most $n-1$ vertices with domination number 2. 

Let $u,v$ be a dominating set in $G$. Let $A= N(u)- N[v]$, $B=N(u)\cap N(v)$ and $C= N(v) - N[u]$. If $G[A\cup B]$ has an edge, let $H_1 = G[\{u\} \cup A \cup B]$ and $H_2 = G[\{v\} \cup C]$. By Theorems \ref{thm:domination_splus} and \ref{thm:n-gamma}, we have $s^+(H_1) \geq |V(H_1)|$ and $s^+(H_2) \geq |V(H_2)|-1$, respectively. By Theorem \ref{thm:square_energy_partition}, we get that $s^+(G) \geq s^+(H_1)  + s^+(H_2) \geq n-1.$ A similar argument applies if the graph $G[B \cup C]$ has an edge. 

Thus, we can assume that the graphs $G[A\cup B]$ and $G[B\cup C]$ have no edges. Suppose
the graph $G[A \cup C]$ has a path $a,w,b$, where $w \in A$ and $a, b \in C$ (without loss of generality). Clearly, $a,w,b$ is an induced path. Furthermore, the graphs $G-a$ and $G-b$ are connected. Now, if $G-w$ is connected, then by Lemma \ref{lemma:P3_lemma} and the induction hypothesis, we have that $s^+(G)\geq n-1$. 
 
If $G-w$ is disconnected, then $w$ is a cut-vertex, which implies $B$ is empty, and $uv$ is not an edge. Thus, $G$ is bipartite with partite sets $A\cup \{v\}$ and $C\cup \{u\}$, and assertion holds. 
    
Hence, we can assume that $G[A\cup C]$ does not contain $P_3$ as a subgraph, and hence it is the union of a matching $M$, say of size $r$, and an independent set. Note that every edge in matching $M$ has one endpoint in $A$ and the other in $C$. Now, if $B$ is empty, then again, $G$ is bipartite, and the assertion holds. So assume that $|B|=t\ge 1$. We consider the following cases:

\textbf{Case 1:} $u\nsim v$. ~ If $r=0$, then the graph $G$ is bipartite, and the assertion holds. If $r>1$, then the graph in Figure \ref{fig:domination2_case1}(a) is an induced subgraph of $G$, and thus, by the Interlacing Theorem, we have $\lambda_3(G) \geq 1$. Using Lemma \ref{lemma:domination=2_eig}, we have $s^+(G) \geq n-1$. So assume that $r=1$ and let $M=\{u'v'\}$, where $u\sim u'$ and $v'\sim v$. If $t \ge 2$, then the graph $G-v'$ is a connected bipartite graph with at least one $C_4$, so $s^+(G) \ge s^+(G-v') = |E(G-v')|\ge |V(G-v')| = n-1$.   

\begin{figure}[H]
 \begin{subfigure}{0.49\textwidth}
    \centering
    \begin{tikzpicture}[scale=0.35]
\fill[line width=2pt, fill opacity=0] (-2,-5) -- (2,-5) -- (3.23606797749979,-1.195773934819387) -- (0,1.1553670743505062) -- (-3.2360679774997894,-1.1957739348193854) -- cycle;
\draw [line width=1pt]
 (-2,-5)-- (2,-5)
 (2,-5)-- (3.23606797749979,-1.195773934819387)
 (3.23606797749979,-1.195773934819387)-- (0,1.1553670743505062)
 (0,1.1553670743505062)-- (-3.2360679774997894,-1.1957739348193854)
 (-3.2360679774997894,-1.1957739348193854)-- (-2,-5)
 (-2.5,-6) -- (2.5,-6)
 (-2.5,-6) -- (-3.2360679774997894,-1.1957739348193854)
 (3.23606797749979,-1.195773934819387) -- (2.5,-6);

\draw [fill=black] 
(-2,-5) circle (5pt)
(2,-5) circle (5pt)
(-2.5,-6) circle (5pt)
(2.5,-6) circle (5pt)
(3.23606797749979,-1.195773934819387) circle (5pt)
(3.23606797749979,-0.7) node {$v$}
(0,1.1553670743505062) circle (5pt)
(-3.2360679774997894,-1.1957739348193854) circle (5pt)
(-3.2360679774997894,-0.7) node {$u$};
\end{tikzpicture}
    \subcaption{}
    \end{subfigure}
\begin{subfigure}{0.49\textwidth}
    \centering
    \begin{tikzpicture}[scale=0.35]
\fill[line width=2pt, fill opacity=0] (-2,-5) -- (2,-5) -- (3.2360679774997894,-1.1957739348193874) -- (0,1.155367074350505) -- (-3.236067977499789,-1.1957739348193854) -- cycle;

\draw [line width = 1pt]
 (-2,-5)-- (2,-5)
 (2,-5)-- (3.2360679774997894,-1.1957739348193874)
 (3.2360679774997894,-1.1957739348193874)-- (0,1.155367074350505)
 (0,1.155367074350505)-- (-3.236067977499789,-1.1957739348193854)
 (-3.236067977499789,-1.1957739348193854)-- (-2,-5)
 (3.2360679774997894,-1.1957739348193874)-- (6,1)
 (3.2360679774997894,-1.1957739348193874)-- (6,-3)
 (-3.236067977499789,-1.1957739348193854)-- (-6,1)
 (-3.236067977499789,-1.1957739348193854)-- (-6,-3);

\draw [fill=black] (-2,-5) circle (5pt)
 (-1.7,-5.6) node {$u'$}
 (2,-5) circle (5pt)
 (1.8,-5.6) node {$v'$}
 (3.2360679774997894,-1.1957739348193874) circle (5pt)
 (3.25,-0.6) node {$v$}
 (0,1.155367074350505) circle (5pt)
 (-3.236067977499789,-1.1957739348193854) circle (5pt)
 (-3.15,-0.6) node {$u$}
 (6,1) circle (5pt)
 (6,-3) circle (5pt)
 (-6,1) circle (5pt)
 (-6,-3) circle (5pt)
 (-5.4,-0.63) circle (3pt)
 (-5.42,-1.39) circle (3pt)
 (5.4,-0.63) circle (3pt)
 (5.4,-1.39) circle (3pt);
\end{tikzpicture}
\subcaption{}
\end{subfigure}  
    \caption{}
    \label{fig:domination2_case1}
\end{figure}

So, $t=r=1$ is the only case left, and $G$ is the graph given in Figure \ref{fig:domination2_case1}(b). Let $k$ and $\ell$ be the number of leaves at $u$ and $v$, respectively. Then $G\cong H(k, \ell)$ and we are done by Proposition \ref{prop:pentagon}. 

\textbf{Case 2:} $u\sim v$. ~ Consider the case $r=0$. Note that the graph shown in Figure \ref{fig:domination2_case2}(a) has exactly two positive and two negative eigenvalues. By Lemma \ref{lemma:inertia}, $G$ has exactly two positive eigenvalues. By Lemma \ref{lemma:two_positive_eigs}, we get $s^+(G)\ge n-1$. 

Now, let $r=1$ and $M=\{u'v'\}$, where $u\sim u'$ and $v'\sim v$. Then $G-u'$ has exactly two positive eigenvalues, and so by Lemma \ref{lemma:two_positive_eigs}, $s^+(G) \ge s^+(G-u')\ge |E(G-u')| \ge n -1$. 

\begin{figure}[H]
 \begin{subfigure}{0.49\textwidth}
     \centering
     \begin{tikzpicture}[scale=0.4]
\draw [line width=1pt] 
 (-2,0)-- (2,0)
 (-2,0)-- (0,2)
 (0,2)-- (2,0)
 (2,0)-- (5,0)
 (-2,0)-- (-5,0);

\draw [fill=black] (-2,0) circle (5pt)
 (2,0) circle (5pt)
 (0,2) circle (5pt)
 (5,0) circle (5pt)
 (-5,0) circle (5pt);
\end{tikzpicture}
\subcaption{}
\end{subfigure}
\begin{subfigure}{0.49\textwidth}
    \centering
    \begin{tikzpicture}[scale=0.35]
\fill[line width=2pt, fill opacity=0] (-2,-5) -- (2,-5) -- (3.23606797749979,-1.195773934819387) -- (0,1.1553670743505062) -- (-3.2360679774997894,-1.1957739348193854) -- cycle;
\draw [line width=1pt]
 (-2,-5)-- (2,-5)
 (2,-5)-- (3.23606797749979,-1.195773934819387)
 (3.23606797749979,-1.195773934819387)-- (0,1.1553670743505062)
 (0,1.1553670743505062)-- (-3.2360679774997894,-1.1957739348193854)
 (-3.2360679774997894,-1.1957739348193854)-- (-2,-5)
 (-3.2360679774997894,-1.1957739348193854)-- (3.23606797749979,-1.195773934819387)
 (3.23606797749979,-1.195773934819387)-- (6.62,0.16)
 (-3.2360679774997894,-1.1957739348193854)-- (-6.56,0.18)
 (-2.5,-6) -- (2.5,-6)
 (-2.5,-6) -- (-3.2360679774997894,-1.1957739348193854)
 (3.23606797749979,-1.195773934819387) -- (2.5,-6);

\draw [fill=black] 
(-2,-5) circle (5pt)
(2,-5) circle (5pt)
(-2.5,-6) circle (5pt)
(2.5,-6) circle (5pt)
(3.23606797749979,-1.195773934819387) circle (5pt)
(3.23606797749979,-0.7) node {$v$}
(0,1.1553670743505062) circle (5pt)
(-3.2360679774997894,-1.1957739348193854) circle (5pt)
(-3.2360679774997894,-0.7) node {$u$}
(6.62,0.16) circle (5pt)
(-6.56,0.18) circle (5pt);
\end{tikzpicture}
\subcaption{}
\end{subfigure}
   \caption{}
    \label{fig:domination2_case2}
\end{figure}

Finally, let $r\ge 2$. First, suppose that $u$ has no leaves as neighbours. Then, one can find an induced $P_3$ starting at $u$, which uses an edge from the matching $M$ and no vertex of this $P_3$ is a cut-vertex. Using Lemma \ref{lemma:P3_lemma} and the induction hypothesis, we see that $s^+(G)\ge n-1$. One can argue similarly if $v$ has no leaves as neighbours. So, assume that both $u$ and $v$ have at least one leaf as a neighbour. Then the graph in Figure \ref{fig:domination2_case2}(b) is an induced subgraph of $G$, and hence by the Interlacing Theorem, we have $\lambda_3(G) \geq 1$. Now, Lemma \ref{lemma:domination=2_eig} completes the proof.
\end{proof}

\section{Applications of the improved $P_3$-removal lemma}\label{section:P_3_applications}

As an application of the improved $P_3$-removal lemma, we obtain the following result for graphs with small clique and independence numbers.

\begin{theorem}\label{thm:clique_independence}
There exists a constant $c = \frac{1}{17}$ such that if $G$ is a connected graph of order $n$ with $\alpha(G) \omega(G) \leq cn$, then $\min\{s^+(G), s^-(G)\} \geq n$.
\end{theorem}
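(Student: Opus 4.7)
The plan is to iteratively apply the improved $P_3$-removal lemma (Lemma \ref{lemma:P3_lemma}) to strip vertices from $G$ until the remaining induced subgraph $H$ contains no induced $P_3$. A standard observation is that such a graph is a disjoint union of cliques: in any connected component, if two vertices were at distance $\ge 2$, the first three vertices on a shortest path between them would induce a $P_3$. So I write $H$ as a disjoint union of $r$ cliques of orders $k_1, \ldots, k_r$.

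Let $t = n - |V(H)|$ be the total number of vertices removed. Iterating Lemma \ref{lemma:P3_lemma} with $\epsilon = \tfrac{1}{16}$ and telescoping yields
\[s^+(G) \ge t(1 + \epsilon) + s^+(H), \qquad s^-(G) \ge t(1 + \epsilon) + s^-(H).\]
Using the spectrum $\{k_i - 1, -1, \ldots, -1\}$ of $K_{k_i}$, I get $s^-(H) = \sum_i (k_i - 1) = (n - t) - r$ and $s^+(H) = \sum_i (k_i - 1)^2 \ge s^-(H)$. Combining,
\[\min\{s^+(G),\, s^-(G)\} \ge t(1 + \epsilon) + (n - t) - r = n + t \epsilon - r,\]
so it suffices to show $t \epsilon \ge r$, i.e., $t \ge 16 r$.

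For this inequality, I observe that selecting one vertex from each clique of $H$ gives an independent set in $G$, so $r \le \alpha(G)$. Each $k_i \le \omega(G)$, hence $|V(H)| = \sum_i k_i \le r\, \omega(G) \le \alpha(G)\omega(G) \le cn$, which forces $t \ge (1 - c) n$. Therefore
\[t - 16 r \;\ge\; (1 - c) n - 16 c n \;=\; (1 - 17 c) n,\]
which is nonnegative exactly when $c \le \tfrac{1}{17}$, matching the claimed constant.

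There is no substantial obstacle beyond Lemma \ref{lemma:P3_lemma}; the argument is a clean bookkeeping reduction. The value $c = 1/17$ is sharp for this method because it is precisely the choice that balances the lower bound $t \ge (1-c)n$ against the upper bound $16 r \le 16 c n$, reflecting the exact trade-off between the per-removal gain of $1 + \epsilon = 17/16$ and the residual deficit of $r$ coming from the disjoint-clique terminal structure.
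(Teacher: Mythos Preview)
Your argument is correct and follows essentially the same route as the paper's proof: iterate the $P_3$-removal lemma until the remaining induced subgraph is a disjoint union of cliques, then bound the number of cliques and their total size via $\alpha(G)$ and $\omega(G)$ (the paper phrases the endgame as a contradiction, but the bookkeeping is the same). One minor imprecision worth flagging: Lemma~\ref{lemma:P3_lemma} does not assert that the \emph{same} vertex $u$ works for both $s^+$ and $s^-$, so the removal sequence---and hence $H$, $t$, $r$---may differ between the two cases; the paper handles this by proving the $s^-$ case and declaring the $s^+$ case ``similar,'' and your argument goes through verbatim once you run it separately for each.
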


\begin{proof} We prove the assertion for $s^-$; the proof for $s^+$ is similar. Take $c = \frac{\epsilon}{1 + \epsilon}$, where $\epsilon = \frac{1}{16}$. For the sake of contradiction, suppose $s^-(G) < n$. Let $V_0 = V(G)$. We create a sequence of vertex subsets $V_1, \ldots, V_k$ as follows: if $G_i = G[V_i]$ has an induced $P_3$, we apply Lemma~\ref{lemma:P3_lemma} to find a vertex $u_i \in V_i$ such that $s^-(G_i) \geq s^-(G_i-\{u_i\}) + 1 + \epsilon$. Set $V_{i + 1} = V_i \backslash \{u_i\}$. We repeat this operation until $G_k = G[V_k]$ has no induced $P_3$. Thus, $G_k$ is a disjoint union of cliques. Let the cliques in $G_k$ have vertex sets $C_1, \ldots, C_{\ell}$, with $\abs{C_1} \leq \cdots \leq \abs{C_{\ell}}$.

By construction, for all $0\le i\le k-1$, we have 
\[ s^-(G_i) \geq s^-(G_{i + 1}) + 1 + \epsilon.\]
Telescoping, we have
\[s^-(G) \geq s^-(G_k) + (1 + \epsilon) k.\]
On the other hand, $s^-(G_k) \ge n - k - \ell$. So we obtain
 \[s^-(G) \geq n + \epsilon k - \ell,\]
 which implies $\ell > \epsilon k$. As $\ell \leq \abs{C_1} + \cdots + \abs{C_{\ell}} = n - k$, we get 
 \[k < \frac{1}{1 + \epsilon} n.\]
Since $\abs{C_i} \leq \omega(G)$ and $\ell \leq \alpha(G)$, it follows that
\[\alpha(G) \omega(G) \ge \abs{C_1} + \cdots + \abs{C_{\ell}} = n - k > \frac{\epsilon}{1+\epsilon} n,\]
a contradiction. The proof is complete.
\end{proof}

A \emph{$k$-th power} of a Hamiltonian cycle in a graph $G$ is a bijective map $\phi: \mathbb{Z}_n \to V(G)$, such that for each $i \in \mathbb{Z}_n$ and $j \in \{1,2,\ldots, k\}$, $(\phi(i), \phi(i + j))$ is an edge in $G$. 

Suppose $G$ contains a 2nd power of a Hamiltonian cycle. Assume $n=3k + r$, where $k\ge 0$ and $3\le r\le 5$. Then $G$ can be vertex partitioned into $k$ many triangles and a connected graph $H\in \{K_3, K_4 - E(K_2), K_4, K_5 - E(P_3), K_5 - E(P_2), K_5 - E(2K_2), K_5 - E(K_2), K_5\}$ on $r$ vertices. Each of these graphs has positive square energy at least $\tfrac{4r}{3}$. For every $n\ge 3$, we therefore conclude that 
\[ s^+(G)\ge k s^+(K_3) + s^+(H) \ge \frac{4(n-r)}{3} + \frac{4r}{3} = \frac{4n}{3}.\]
The positive square energy is therefore much larger than $n$. However, the same does not hold for the negative square energy. As another application of the $P_3$-removal lemma, we prove the following.

\begin{theorem}\label{thm:Hamiltonian_cycle}
If $G$ is a graph of order $n$ containing the $16$-th power of a Hamiltonian cycle, then $s^-(G) \geq n-1$.
\end{theorem}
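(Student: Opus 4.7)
The plan is to mimic the iterative strategy of Theorem~\ref{thm:clique_independence}: apply the improved $P_3$-removal lemma repeatedly until the remaining graph is a disjoint union of cliques, then exploit the rigidity of the $16$-th power of a Hamiltonian cycle to bound the number of resulting cliques. Starting from $V_0 = V(G)$ and invoking Lemma~\ref{lemma:P3_lemma} whenever $G_i := G[V_i]$ contains an induced $P_3$, we obtain vertices $u_i \in V_i$ with $s^-(G_i) \geq s^-(G_i - u_i) + \tfrac{17}{16}$, and set $V_{i+1} = V_i \setminus \{u_i\}$. The process terminates at some $V_k$ for which $G_k := G[V_k]$ is a disjoint union of cliques $C_1, \ldots, C_\ell$ with $\sum_i |C_i| = n - k$. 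Telescoping and using $s^-(K_r) = r - 1$ yields
\[s^-(G) \;\geq\; s^-(G_k) + \tfrac{17}{16}k \;=\; (n - k - \ell) + \tfrac{17}{16}k \;=\; n - \ell + \tfrac{k}{16},\]
so it suffices to show $\ell \leq k/16 + 1$.

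To bound $\ell$, label the vertices as $v_0, v_1, \ldots, v_{n-1}$ along the Hamiltonian cycle, so that $v_iv_j$ is an edge of $G$ whenever $i$ and $j$ are at cyclic distance at most $16$. Partition $V_k$ into maximal \emph{blocks} of $V_k$-vertices such that, reading cyclically, each consecutive pair inside $V_k$ has cyclic distance at most $16$. Within a block, consecutive vertices are adjacent in $G_k$, so by transitivity each block lies inside a single clique, and thus $\ell \leq b$, where $b$ denotes the number of blocks. By maximality of blocks, between any two cyclically consecutive blocks lies a gap of cyclic length exceeding $16$ consisting entirely of removed vertices; summing over the $b$ gaps (when $b \geq 2$) yields $k \geq 16b \geq 16\ell$, and hence $s^-(G) \geq n$. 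The remaining regimes are direct: if $b = 1$, then $\ell = 1$ and $s^-(G) \geq (n - k - 1) + \tfrac{17}{16}k \geq n - 1$; if $b = 0$, then $V_k = \emptyset$ and $s^-(G) \geq \tfrac{17}{16}n$; and if the iteration never starts, then $G$ is $P_3$-free and, being connected (since it contains a Hamiltonian cycle), equals $K_n$, so $s^-(G) = n - 1$.

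The main technical point is the block/gap accounting on the cycle, in particular verifying that endpoints of two cyclically consecutive blocks are forced to have cyclic distance greater than $16$ (immediate from the maximality in the definition of a block, since otherwise the two blocks would merge). Once this geometric count is in hand, the proof reduces to a clean application of Lemma~\ref{lemma:P3_lemma} combined with the telescoping identity above.
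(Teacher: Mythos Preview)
Your proof is correct and follows the same strategy as the paper: iterate the $P_3$-removal lemma, telescope to obtain $s^-(G)\ge n-\ell+k/16$, and then use the $16$-th power of the Hamiltonian cycle to force $\ell\le k/16+1$ via a gap-counting argument on the cycle. The only differences are cosmetic---you argue directly and partition $V_k$ into ``blocks'' to bound $\ell\le b$, whereas the paper argues by contradiction and counts ``gaps'' between distinct cliques; your explicit treatment of the degenerate cases $b\in\{0,1\}$ is a nice touch that the paper leaves implicit.
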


\begin{proof} 
For the sake of contradiction, suppose $s^-(G) < n-1$. Proceeding as in the proof of Theorem \ref{thm:clique_independence}, we can find some vertex subset $V_k$ of size $n - k$ such that $G_k = G[V_k]$ is the disjoint union of cliques $C_1, \ldots, C_\ell$, with $\ell > \frac{k}{16} + 1$. In particular, we have $\ell \geq 2$.

Let $\phi: \mathbb{Z}_n \to V(G)$ be a $16$-th power of a Hamiltonian cycle in $G$. We define a \emph{gap} as an interval $[i, j]$ where $\phi(i - 1)$ and $\phi(j + 1)$ belongs to different $C_k$'s, and $\phi(i), \ldots, \phi(j)$ lies in $V \backslash V_k$. Then there must be at least $\ell$ gaps. Furthermore, since the cliques are pairwise independent, each gap must have length at least $16$. This implies $k=|V \backslash V_k| \ge 16 \ell > k$,
a contradiction.
\end{proof}

\section{Open problems}\label{section:conclusions}

Unicyclic graphs are believed to be among the hardest cases for Conjecture \ref{conj:square_main}. It was shown in \cite{Aida_2023} that Conjecture \ref{conj:square_main} holds for cycles $C_n$. We calculate the square energy of cycles precisely.

\begin{proposition}
Let $n\ge 3$. Then, the following are true.
\begin{enumerate}[$(i)$]
       \item If $n$ is even, then $s^+(C_n)=s^-(C_n)=n$.
       \item If $n\equiv 3 \mod 4$, then 
   \[ s^+(C_n) = n - 1 + \frac{1}{\cos(\frac{\pi}{n})}> n, \quad s^-(C_n) = n +1  - \frac{1}{\cos(\frac{\pi}{n})}< n.\]
       \item If $n\equiv 1 \mod 4$, then 
   \[ s^+(C_n) = n + 1 - \frac{1}{\cos(\frac{\pi}{n})}< n, \quad  s^-(C_n) = n - 1 + \frac{1}{\cos(\frac{\pi}{n})}> n.\]
\end{enumerate}
\end{proposition}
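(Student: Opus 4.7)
The plan is to evaluate $s^\pm(C_n)$ via direct computation from the explicit spectrum. Recall that the adjacency eigenvalues of $C_n$ are $\mu_k = 2\cos(2\pi k/n)$ for $k=0,1,\ldots,n-1$, and that the trace identity gives
\[s^+(C_n) + s^-(C_n) \;=\; \operatorname{tr}\bigl(A(C_n)^2\bigr) \;=\; 2\,\lvert E(C_n)\rvert \;=\; 2n.\]
It therefore suffices to pin down $s^+(C_n)$ in each case and read off $s^-(C_n) = 2n - s^+(C_n)$.

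For part $(i)$, when $n$ is even the graph $C_n$ is bipartite, so its spectrum is symmetric about $0$ and hence $s^+(C_n) = s^-(C_n)$. Combined with the trace identity this immediately forces both quantities to equal $n$.

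For odd $n$, set $s = \lfloor n/4 \rfloor$. The symmetry $\mu_k = \mu_{n-k}$ together with $\mu_k > 0 \iff k < n/4$ or $k > 3n/4$ shows that the positive eigenvalues, counted with multiplicity, are $\mu_0 = 2$ together with two copies of $2\cos(2\pi k/n)$ for each $k=1,\ldots,s$. Applying $2\cos^2\alpha = 1 + \cos 2\alpha$ then gives
\[s^+(C_n) \;=\; 4 + 4s + 4\sum_{k=1}^s \cos\!\left(\frac{4\pi k}{n}\right).\]
I would evaluate the remaining sum using the Dirichlet-type identity
\[\sum_{k=1}^s \cos(k\theta) \;=\; \frac{\sin\bigl((s+\tfrac12)\theta\bigr)}{2\sin(\theta/2)} - \tfrac12\]
with $\theta = 4\pi/n$, so the crucial quantity is $\sin\bigl((2s+1)\cdot 2\pi/n\bigr)$. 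The case split is the essential trick: if $n = 4s+1$ then $(2s+1)\cdot 2\pi/n = \pi + \pi/n$, so this sine equals $-\sin(\pi/n)$; if $n = 4s+3$ then $(2s+1)\cdot 2\pi/n = \pi - \pi/n$, so the sine equals $+\sin(\pi/n)$. The double-angle formula $\sin(2\pi/n) = 2\sin(\pi/n)\cos(\pi/n)$ then collapses the sum to $\pm \tfrac{1}{4\cos(\pi/n)} - \tfrac12$, with the sign matching that of $\sin\bigl((2s+1)\cdot 2\pi/n\bigr)$. Substituting back, and rewriting $4s+2$ as $n-1$ in case $(ii)$ and as $n+1$ in case $(iii)$, produces the two claimed closed forms for $s^+(C_n)$; the companion formulas for $s^-(C_n)$ follow from the trace identity, and the strict inequalities are immediate from $\cos(\pi/n) < 1$.

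The only obstacle is the routine trigonometric bookkeeping in the two odd subcases; once the angle $(2s+1)\cdot 2\pi/n$ has been reduced modulo $2\pi$, no further ideas are needed.
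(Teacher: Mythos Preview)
Your proposal is correct and follows essentially the same route as the paper: both identify the positive eigenvalues from the explicit spectrum $\mu_k = 2\cos(2\pi k/n)$, reduce $\sum \mu_k^2$ via the double-angle formula, and close out the resulting cosine sum with the Dirichlet-kernel identity $\sum_{k=1}^s \cos(k\theta) = \frac{\sin((s+\tfrac12)\theta)}{2\sin(\theta/2)} - \tfrac12$. Your explicit use of the trace identity $s^+ + s^- = 2n$ to read off $s^-$ is a minor streamlining over the paper's presentation, but not a different method.
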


\begin{proof}
If $n$ is even, then $C_n$ is bipartite, and the assertion is immediate. The spectrum of $C_n$ is $\{\cos \left(\frac{2\pi j}{n}\right), j = 0, 1, \ldots, n-1\}$. Note that $\cos{\theta}$ is non-negative if $0\le\theta\le \pi/2$ and $3\pi/2 \le\theta\le 2\pi$. Now, if $n\equiv3 \mod 4$, then $n = 4k + 3$ for some positive integer $k$. The eigenvalue $\cos \left(\frac{2\pi j}{n}\right)$ is positive whenever $j = 0,1,\dots,k$ and $j = 3k+3,3k+4,\dots,4k+2$. Thus,  
   \begin{align*}
       s^+(C_{4k+3}) &= \sum_{j=0}^k 4\cos^2\bigg({\frac{2\pi j}{4k+3}}\bigg) + \sum_{j=3k+3}^{4k+2} 4\cos^2\bigg({\frac{2\pi j}{4k+3}}\bigg) \\
       &= 2k+3 +\frac{1}{2}\sec \bigg(\frac{\pi}{4k+3}\bigg) + 2k-1 + \frac{1}{2}\sec \bigg(\frac{\pi}{4k+3}\bigg) \\
       &= n-1 + \sec \bigg(\frac{\pi}{n}\bigg).
   \end{align*}
Now, if $n\equiv 1 \mod 4$ then $n = 4k + 1$ for some positive integer $k$. The eigenvalue $\cos \left(\frac{2\pi j}{n}\right)$ is positive whenever $j = 0,1,\dots,k$ and $j = 3k+1,3k+2,\dots,4k$. Thus,  
   \begin{align*}
       s^+(C_{4k+1}) &= \sum_{j=0}^k 4\cos^2\bigg({\frac{2\pi j}{4k+1}}\bigg) + \sum_{j=3k+1}^{4k} 4\cos^2\bigg({\frac{2\pi j}{4k+1}}\bigg) \\
       &= 2k+3 -\frac{1}{2}\sec \bigg(\frac{\pi}{4k+1}\bigg) + 2k-1 - \frac{1}{2}\sec \bigg(\frac{\pi}{4k+1}\bigg) \\
       &= n+1 - \sec \bigg(\frac{\pi}{n}\bigg). \qedhere
\end{align*}\end{proof}

The above calculation and further computational investigation point us to the following conjecture for unicyclic graphs. 

\begin{conjecture}\label{conj:unicyclic}
   If $G$ is a unicyclic graph of order $n$ whose cycle has odd length $k$, then
   \begin{enumerate}[$(i)$]
       \item $s^+(G)> n > s^-(G)$, if $k\equiv 3 \mod 4$.
       \item $s^+(G)< n < s^-(G)$, if $k\equiv 1 \mod 4$.
   \end{enumerate}
\end{conjecture}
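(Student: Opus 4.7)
Since $G$ is unicyclic with $|E(G)| = n$, we have $s^+(G) + s^-(G) = \tr(A(G)^2) = 2n$, so the conjecture is equivalent to showing $\mathrm{sgn}(s^+(G) - s^-(G)) = (-1)^{(k+1)/2}$ (positive when $k \equiv 3 \pmod 4$, negative when $k \equiv 1 \pmod 4$). The base case $G = C_k$ is exactly the proposition above, which gives $s^+(C_k) - s^-(C_k) = (-1)^{(k+1)/2} \cdot 2(\sec(\pi/k) - 1)$. The natural strategy is induction on $n - k$: pick a pendant vertex $v$ of $G$ attached to $u$, set $G' = G - v$ (still unicyclic with the same cycle $C_k$), and try to deduce the sign for $G$ from the sign for $G'$.

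The central algebraic identity is the classical decomposition of the characteristic polynomial of a unicyclic graph,
\[
\phi(G, x) = M(G, x) - 2\, M(G - V(C_k), x),
\]
where $M$ denotes the matching polynomial. Since $k$ is odd, the symmetries $M(H, -x) = (-1)^{|V(H)|} M(H, x)$ applied to $H = G$ and $H = G - V(C_k)$ combine to give
\[
\phi(G, x) + (-1)^n \phi(G, -x) = 2\, M(G, x), \qquad \phi(G, x) - (-1)^n \phi(G, -x) = -4\, M(G - V(C_k), x).
\]
The roots of $M(G, x)$ are symmetric about zero and contribute equally to $s^+$ and $s^-$; thus the entire asymmetry $s^+(G) - s^-(G)$ is driven by the correction term $M(G - V(C_k), x)$, which is supported on only the $n - k$ tree vertices. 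Combined with the Schwenk--Wilf pendant recurrence $\phi(G, x) = x\,\phi(G', x) - \phi(G - u - v, x)$, one could in principle track this asymmetry inductively, possibly via the analytic representation
\[
s^+(G) - s^-(G) = -\frac{2}{\pi}\int_0^\infty t^2\, \mathrm{Re}\!\left(\frac{\phi'(G, it)}{\phi(G, it)}\right) dt,
\]
which converts the sign question into a statement about integrals of $M(G, it)$ and $M(G - V(C_k), it)$ along the imaginary axis.

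The main obstacle is the inductive step itself. Cauchy interlacing provides only loose bounds, and individual eigenvalues can swap sign under pendant attachment, so $s^+ - s^-$ cannot be tracked vertex-by-vertex. The crux of the argument must be a uniform global statement: that the correction term $M(G - V(C_k), x)$, regardless of the specific trees attached to the cycle, influences the adjacency spectrum of $G$ in a way whose sign is dictated only by $k \bmod 4$. Proving such a cycle-parity-dominates-trees principle, likely via a sign analysis of $M(G, it)$ and $M(G - V(C_k), it)$ for $t > 0$, appears to be the central technical hurdle.
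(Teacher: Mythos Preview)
The statement you are attempting to prove is \emph{Conjecture}~\ref{conj:unicyclic} in the paper, listed in the section on open problems; the paper does not claim it as a theorem and offers no proof. So there is no ``paper's own proof'' to compare against.

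Your proposal is also not a proof: it is an outline that correctly identifies the reduction $s^+(G)+s^-(G)=2n$ and the Godsil--Gutman formula $\phi(G,x)=M(G,x)-2M(G-V(C_k),x)$, but then explicitly concedes that the inductive step is missing and that the ``cycle-parity-dominates-trees principle'' remains to be established. That concession is accurate. The integral representation you wrote down does convert the problem into a sign question for an integrand along the imaginary axis, but nothing in your sketch forces that sign to be constant, and in fact the integrand need not have constant sign pointwise; one would need a global cancellation argument, which you have not supplied. As written, this is a plausible plan of attack on an open problem, not a proof.
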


We propose the following for the equality case in Conjecture \ref{conj:square_main}. 

\begin{conjecture}\label{conj:tree_equality} Let $G$ be a connected graph of order $n$. 
\begin{enumerate}[$(i)$]
    \item $s^+(G) = n-1$ if and only if $G$ is a tree.
    \item $s^-(G) = n-1$ if and only if $G$ is a tree or a complete graph.
\end{enumerate}
\end{conjecture}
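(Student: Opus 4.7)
The forward implications are immediate. Any tree $T$ is bipartite, so its spectrum is symmetric about $0$, giving $s^+(T) = s^-(T) = \frac{1}{2}\sum_i \lambda_i(T)^2 = m = n-1$. For $K_n$, the eigenvalues are $n-1$ and $-1$ (with multiplicity $n-1$), so $s^-(K_n) = n-1$.

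For the reverse direction of $(i)$, suppose $G$ is connected with $s^+(G) = n-1$. If $m \ge n+1$, then Conjecture \ref{conj:square_strong} yields $s^+(G) \ge n$, a contradiction; hence $m \in \{n-1, n\}$. The case $m = n-1$ forces $G$ to be a tree. In the unicyclic case $m = n$, an even cycle makes $G$ bipartite with $s^+(G) = m = n > n-1$, while a cycle of length $3$ is handled by Theorem \ref{thm:triangle_with_paths}, which gives $s^+(G) \ge n$. For odd cycles of length at least $5$ with attached tree branches, the plan is to mimic the strategy of Section \ref{section:claw-free}: glue each tree branch onto a weighted cycle via Lemma \ref{lem:gluing}, and control the $s^+$ of the resulting weighted matrix using trigonometric estimates for the diagonal and off-diagonal entries of $A^-$ of a cycle, analogous to Lemmas \ref{lem:path-endpoint} and \ref{lem:path-off-diagonal}.

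For the reverse direction of $(ii)$, proceed by induction on $n$. If $G$ is bipartite, the identity $s^-(G) = m$ forces $m = n-1$, and $G$ is a tree. Otherwise assume $G$ is connected, non-bipartite, $s^-(G) = n-1$, and $G \neq K_n$. If $G$ has no induced $P_3$, then $G$ is a disjoint union of cliques, and connectedness forces $G = K_n$, a contradiction. Hence $G$ contains an induced $P_3$, and Lemma \ref{lemma:P3_lemma} provides some $u$ with $s^-(G-u) \le n - 2 - \epsilon$ for $\epsilon = \frac{1}{16}$. If $G-u$ is connected, the lower bound $s^-(G-u) \ge n-2$ from Conjecture \ref{conj:square_main} contradicts this estimate. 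Hence $u$ must be a cut vertex; write $G-u = H_1 \sqcup \cdots \sqcup H_c$ with $c \ge 2$, and set $t_i := s^-(H_i) - (|V(H_i)|-1) \ge 0$. Then Theorem \ref{thm:square_energy_partition} gives $\sum_i t_i \le c - 1 - \epsilon$, while the induction hypothesis forces $t_i = 0$ if and only if $H_i$ is a tree or a complete graph.

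The main obstacle lies in this last step for $(ii)$: the bound $\sum_i t_i \le c - 1 - \epsilon$ alone does not force $G$ to be extremal, since many $H_i$ can be trees or cliques while only a few carry small slack. The proposed remedy is to argue that if for every induced $P_3$ of $G$ the vertex $u$ furnished by Lemma \ref{lemma:P3_lemma} is a cut vertex, then $G$ must possess a highly restricted block structure whose every block is either a tree or a clique (by induction); one then needs to show that any non-trivial gluing of such blocks at a cut vertex strictly increases $s^-$, which would force $G$ to be a single tree or single clique. Carrying out this block-and-gluing analysis rigorously, together with extending Theorem \ref{thm:triangle_with_paths} from triangles to longer odd cycles in $(i)$, constitutes the most delicate portion of the argument.
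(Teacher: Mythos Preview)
The paper does not prove this statement: it is stated as Conjecture~\ref{conj:tree_equality} in Section~\ref{section:conclusions} among the open problems, with no proof or even proof sketch offered. So there is nothing to compare your argument against, and what you have written is at best a conditional strategy, not a proof.

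Two structural problems make the proposal unsound as written. First, both directions rely on other open conjectures from the same paper: your reverse implication in $(i)$ invokes Conjecture~\ref{conj:square_strong} to dispose of $m\ge n+1$, and your reverse implication in $(ii)$ invokes Conjecture~\ref{conj:square_main} to get $s^-(G-u)\ge n-2$. Neither is proved in the paper, so your argument is circular rather than self-contained. Second, your plan for the unicyclic case with odd cycle length $k\ge 5$ cannot simply ``mimic Section~\ref{section:claw-free}''. Theorem~\ref{thm:triangle_with_paths} establishes $s^+\ge n$, and you implicitly aim for the same bound via gluing; but the paper's own computation shows $s^+(C_n)<n$ when $n\equiv 1\pmod 4$, so that target is false in general. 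What you would actually need is the strict inequality $s^+>n-1$, which is precisely Conjecture~\ref{conj:square_main} plus its equality characterization---the very statement you are trying to prove. The gluing machinery of Lemma~\ref{lem:gluing} gives lower bounds of the form $s^+(G)\ge \sum s^+(G_i)+s^+(\Gamma)$, and there is no evident way to squeeze a strict $>n-1$ out of it for branches attached to a $C_k$ with $k\equiv 1\pmod 4$.

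For $(ii)$ you correctly identify the obstacle: even if every induced $P_3$ forces a cut vertex, the resulting block decomposition does not obviously collapse to a single tree or clique, and your ``non-trivial gluing strictly increases $s^-$'' step is exactly the missing content. In short, the forward implications are easy and correct, but the reverse implications remain open, consistent with the paper's presentation of this as a conjecture.
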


We believe that the equality for $s^+$ never holds in Conjecture \ref{conj:square_strong}, and the following might be true.

\begin{conjecture}
Let $G$ be a connected graph of order $n$ such that $s^+(G)=n$. Then $G$ is a bipartite unicyclic graph. 
\end{conjecture}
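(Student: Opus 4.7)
The easy direction is immediate: if $G$ is bipartite unicyclic of order $n$, then $|E(G)|=n$ and the spectrum is symmetric about $0$, so $s^+(G)=s^-(G)=n$. For the converse, suppose $s^+(G)=n$. Using $s^+(G)+s^-(G)=2m$, any bipartite graph $G$ with $s^+(G)=n$ must satisfy $m=n$ and hence is unicyclic with even cycle. It therefore suffices to rule out non-bipartite connected graphs with $s^+(G)=n$.

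I would split the argument by the cyclomatic number $c(G)=m-n+1$. The case $c(G)=0$ (trees) is immediate, since trees are bipartite and $s^+(G)=m=n-1$. For $c(G)=1$ with an odd cycle $C_k$, my plan is to apply the gluing lemma (Lemma \ref{lem:gluing}) to decompose $G$ as $C_k$ with pendant trees $T_1,\ldots,T_k$ attached, then leverage the bipartite identity $s^+(T_i)=m(T_i)$ together with the explicit defect $s^+(C_k)-k=\pm(\sec(\pi/k)-1)$ from the preceding proposition. The aim is to establish the qualitative part of Conjecture \ref{conj:unicyclic}, namely that $s^+(G)-n$ has the same nonzero sign as $s^+(C_k)-k$. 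For $c(G)\ge 2$, a strict form of Conjecture \ref{conj:square_strong}, $s^+(G)>n$, is needed; I would revisit the proofs in Sections \ref{section:claw-free}--\ref{section:domination_2} and track the slack in each estimate, hoping to show it is strictly positive whenever $G$ is not bipartite unicyclic.

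The main obstacle is twofold. First, the equality statement in the unicyclic odd-cycle case is essentially Conjecture \ref{conj:unicyclic}, which is itself open and known to have delicate sign behavior depending on $k\bmod 4$. Second, the super-additivity result (Theorem \ref{thm:square_energy_partition}) and the gluing lemma naturally produce ``$\ge n$'' rather than ``$>n$'', and upgrading to strict inequality requires identifying strictly positive leftover energy in every non-bipartite-unicyclic instance. A promising route is to combine the refined $P_3$-removal lemma (Lemma \ref{lemma:P3_lemma}), whose improvement carries an explicit $\epsilon=\tfrac{1}{16}$ of slack, with an induction on edges: repeatedly delete $P_3$-vertices until we reach either a disjoint union of cliques or a unicyclic remnant, and argue that each non-bipartite reduction step contributes strictly positive excess beyond $n$, while bipartite unicyclic remnants contribute exactly the right amount. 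The gluing lemma's residual terms $R_1(M)$ and $R_2(M)$ look like the most natural place to locate this extra slack.
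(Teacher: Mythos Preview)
This statement appears in the paper as an open conjecture in Section~\ref{section:conclusions}; the paper offers no proof. Your proposal is therefore not competing against any argument in the paper and must be judged on its own.

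Your easy direction is correct and is precisely why the authors formulate the conjecture. The hard direction, however, is not a proof but a reduction to two other open problems: Conjecture~\ref{conj:unicyclic} for the odd-unicyclic case, and a strict form of Conjecture~\ref{conj:square_strong} for $c(G)\ge 2$. You say as much yourself, so what you have written is a roadmap, not an argument.

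There are also concrete obstructions to the strategy as sketched. For the odd-unicyclic case, applying Lemma~\ref{lem:gluing} with $G_0=C_k$ and the pendant trees as $G_i$ yields $s^+(G)\ge \sum_i s^+(T_i)+s^+(\Gamma)$, where $\Gamma$ is $A(C_k)$ with the diagonal entries at the attachment vertices replaced by $-d_i\le 0$. This is not $s^+(C_k)$, so the exact defect $\pm(\sec(\pi/k)-1)$ is not directly available. More seriously, when $k\equiv 1\pmod 4$ you need the \emph{upper} bound $s^+(G)<n$, and every tool in the paper (Theorem~\ref{thm:square_energy_partition}, Lemma~\ref{lem:gluing}, Lemma~\ref{lemma:P3_lemma}) produces only lower bounds on $s^+$. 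For $c(G)\ge 2$, the $P_3$-removal process in Lemma~\ref{lemma:P3_lemma} does not preserve connectivity, the cyclomatic number, or non-bipartiteness, so after deletions you may land on a graph outside any inductive class you have set up; extracting uniform strict slack along such a process is exactly the difficulty that leaves Conjecture~\ref{conj:square_strong} open.
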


If the clique number of a graph is large, then one can expect that it has a large value of $s^+$. This suggests another refinement of Conjecture \ref{conj:square_main}. 

\begin{conjecture} For any connected graph $G$ of order $n$ and $\omega(G)\ge 3$, we have $s^+(G)\ge n$.  
\end{conjecture}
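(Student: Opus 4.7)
The plan is to proceed by strong induction on $n$, with base case $n = 3$ forcing $G = K_3$ and $s^+(K_3) = 4 \geq 3$. For the inductive step, let $K$ be a maximum clique of order $\omega = \omega(G) \geq 3$ and let $H_1, \ldots, H_\ell$ denote the connected components of $G - V(K)$. Super-additivity (Theorem \ref{thm:square_energy_partition}) gives
\[s^+(G) \geq (\omega-1)^2 + \sum_{i=1}^\ell s^+(H_i).\]
I would classify each $H_i$ as \emph{good} if $\omega(H_i) \geq 3$ (in which case the inductive hypothesis gives $s^+(H_i) \geq |V(H_i)|$) and \emph{bad} (triangle-free) otherwise, in which case Conjecture \ref{conj:square_main} would give $s^+(H_i) \geq |V(H_i)| - 1$. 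Writing $p$ for the number of good components, this yields
\[s^+(G) \geq n + \omega^2 - 3\omega + 1 - (\ell - p),\]
which is at least $n$ whenever $\ell - p \leq \omega^2 - 3\omega + 1$.

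For $\omega \geq 4$ there is considerable slack, and I expect that the $K_{\omega+1}$-freeness of $G$, combined with structural arguments in the spirit of Lemma \ref{lemma:claw_free_component}, bounds the number of bad components sufficiently. The main obstacle is the case $\omega(G) = 3$, where the crude analysis above only tolerates $\ell - p \leq 1$, i.e., at most one triangle-free component of $G - V(K)$. This is far from automatic: $K_3$ with several attached pendants already yields many singleton components. To address this, I would deploy the gluing lemma (Lemma \ref{lem:gluing}) with the triangle as the base graph, so that adjusting the diagonal entries at the triangle vertices by the appropriate $-d_j$ produces a matrix $\Gamma$ with $s^+(\Gamma) > 4$. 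The surplus $s^+(\Gamma) - 4$ should, case by case, absorb the deficit incurred by bad components. This mirrors the strategy used in Section \ref{section:claw-free} to prove Theorem \ref{thm:triangle_with_paths} for the unicyclic graphs $P(j, k, \ell)$, where the $-0.5$ diagonal adjustment alone was enough to push $s^+$ above $n$.

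The hard part is that the $\omega(G) = 3$ regime essentially subsumes Conjecture \ref{conj:square_main} (still open in full generality), as it demands tight control over $s^+$ for triangle-free graphs attached arbitrarily to a single triangle. An unconditional proof will likely require both (i) substantial progress on Conjecture \ref{conj:square_main} for triangle-free graphs, and (ii) a refined gluing argument that handles arbitrary bipartite, tree-like, or cyclic attachments to $K_3$. An iterative combination of the gluing lemma with the $P_3$-removal lemma (Lemma \ref{lemma:P3_lemma}) may be effective for systematically reducing complex triangle-free attachments one $P_3$ at a time, extracting an extra $1 + \epsilon$ per removal against the cost of one vertex; but carrying this out uniformly across all attachment patterns (and tracking the interaction with the gluing surplus) appears to require ideas beyond what is available in the present paper.
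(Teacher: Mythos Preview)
This statement appears in the paper as an \emph{open conjecture} (Section~\ref{section:conclusions}), not a theorem; the paper gives no proof. The authors remark that the bottleneck is unicyclic graphs containing a triangle, and that even the restricted family $P(j,k,\ell)$ handled in Theorem~\ref{thm:triangle_with_paths} was already a challenge. Your diagnosis that $\omega(G)=3$ is the hard case, and that it morally subsumes Conjecture~\ref{conj:square_main}, matches the paper's own view.

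Your outline has genuine gaps well before the $\omega=3$ wall, however. First, applying $s^+(H_i)\ge |V(H_i)|-1$ to the triangle-free components invokes Conjecture~\ref{conj:square_main} itself, which is open; your induction is thus conditional on an unproved statement. Second, the claim that $\omega\ge 4$ provides ``considerable slack'' is wrong in the crude form you use: take $K_4$ and attach arbitrarily many pendant leaves to a single vertex. Then $G-V(K_4)$ has unboundedly many (isolated-vertex) components, so $\ell-p$ is unbounded while $\omega^2-3\omega+1=5$. Lemma~\ref{lemma:claw_free_component} depends essentially on claw-freeness, and ``$K_{\omega+1}$-freeness'' is a tautology (every graph is $K_{\omega(G)+1}$-free), so no analogous component bound is available in general. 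Hence even the $\omega\ge 4$ regime already needs something beyond super-additivity over the clique---a gluing or dominating-vertex argument---and neither you nor the paper supplies it. Your closing concession, that a full proof requires ideas beyond what is currently available, is the accurate conclusion.
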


If Conjectures \ref{conj:square_strong} and \ref{conj:unicyclic} are true, then the above would be true as well. We believe the bottleneck in proving the above conjecture is unicyclic graphs with a triangle. Our proof of Theorem \ref{thm:triangle_with_paths} suggests that it is quite a challenge, and we wonder if there is an alternate way to approach this.

We have checked the above conjectures for graphs up to $9$ vertices and found no counterexamples. 

Elphick and Linz \cite{Elphick_Linz_2024} noted that Conjecture \ref{conj:square_main} holds for maximal planar graphs, and asked if the following is true: for any connected maximal planar graph $G$ of order $n \ge 3$, we have $s^+(G)\ge 3(n-2)$ and $s^-(G) \le 3(n-2)$. Based on computational investigation and in light of the super-additivity result (Theorem \ref{thm:square_energy_partition}), we believe the following conjecture may be more accessible.

\begin{conjecture} The following hold:
\begin{enumerate}[$(i)$]
    \item Let $G$ be a maximal planar graph of order $n\ge 10$. Then $s^+(G)\ge 3n$.
    \item Let $G$ be a maximal outerplanar graph of order $n\ge 8$. Then $s^+(G)\ge 2n$. 
\end{enumerate} 
\end{conjecture}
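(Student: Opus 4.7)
The plan is to prove both parts by strong induction on $n$, with the base cases ($n \le N_0$ for some explicit threshold) verified computationally, and to use the super-additivity result (Theorem~\ref{thm:square_energy_partition}) together with a careful variational analysis in the spirit of the Gluing lemma (Lemma~\ref{lem:gluing}) to control $s^+$ at the inductive step.

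For part (ii), the key structural fact is that every maximal outerplanar graph of order $n \ge 4$ contains an \emph{ear}: a vertex $v$ of degree $2$ whose neighbors $u, w$ are adjacent, so that $uvw$ is a triangular face. Moreover, $G - v$ is again maximal outerplanar on $n - 1$ vertices with $2(n-1) - 3$ edges. The heart of the induction is to prove the gain
\[s^+(G) \ge s^+(G - v) + 2,\]
which together with $s^+(G - v) \ge 2(n - 1)$ yields the claim. The strategy is a direct variational one: for any PSD matrix $M$, a block decomposition of $A(G)$ with $v$ isolated gives
\[\norm{A(G) + M}_2^2 = \norm{A(G-v) + M'}_2^2 + M_{vv}^2 + 2(1 + M_{uv})^2 + 2(1 + M_{wv})^2 + 2\!\!\sum_{u' \not\in \{u, v, w\}}\!\! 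M_{u'v}^2,\]
where $M'$ is the principal submatrix of $M$ on $V(G - v)$. The first term is lower bounded by $s^+(G - v) + \norm{A^-(G - v) - M'}_2^2$ via the strengthened form of Lemma~\ref{lem:pythagorean}, and the $v$-column terms cannot simultaneously be made small without forcing $M|_{\{u, w\}}$ to have large entries (by PSD constraints $M_{uv}^2 \le M_{uu} M_{vv}$, etc.), which then inflates the error $\norm{A^-(G - v) - M'}_2^2$. Balancing these two contributions should yield the required margin of $2$.

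For part (i), every maximal planar graph satisfies $|E(G)| = 3n - 6$, has at least $2n - 4$ triangular faces, and $\delta(G) \le 5$. The plan is a structural case split. If $\delta(G) = 3$, a degree-$3$ vertex $v$ has $N(v)$ forming a triangular face, so $G - v$ is maximal planar on $n - 1$ vertices; here the inductive step $s^+(G) \ge s^+(G - v) + 3$ is proved by the same variational template as in part (ii), adapted to account for the extra triangle edge among $N(v)$. If $G$ has a separating triangle $T$, split $G$ along $T$ into two smaller maximal planar graphs $G_1, G_2$ and apply the Gluing lemma directly with distinguished vertices at the three vertices of $T$. In the remaining case where $\delta(G) \in \{4, 5\}$ and no separating triangle exists, I would invoke the trace identity $\tr(A(G)^3) = 6 t(G) \ge 6(2n - 4)$ together with the spectral inequality $\tr(A^3) \le \lambda_1 s^+$ to extract a lower bound on $s^+$.

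The main obstacle is the residual case of part (i): $4$-connected triangulations of minimum degree $5$ with no separating structure resist every local reduction, and the crude inequality $s^+ \ge \tr(A^3)/\lambda_1$ together with $\lambda_1 = O(\sqrt n)$ only gives $s^+ \ge 2\sqrt{6(n - 2)} = O(\sqrt n)$, far weaker than the linear target $3n$. Overcoming this will likely require a genuinely new tool --- perhaps a quadrilateral analogue of the Gluing lemma that glues along $4$-cycles, a $K_4$-minus-edge strengthening of Lemma~\ref{lemma:P3_lemma}, or a higher-moment spectral inequality exploiting both $\tr(A^3)$ and $\tr(A^4)$. In part (ii) the variational step is the delicate piece but seems tractable, given how local the ear-removal is and how developed the PSD-infimum toolkit of Section~\ref{section:prelims} and the gluing machinery above already are.
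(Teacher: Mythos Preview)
The statement you are attempting is a \emph{conjecture} in the paper (Section~\ref{section:conclusions}), not a theorem; the authors do not prove it. They only argue that the coefficients $3$ and $2$ are asymptotically best possible, by exhibiting $C_{n-2}\vee\overline{K_2}$ and $K_1\vee P_{n-1}$ and computing $s^+$ for these families. So there is no ``paper's own proof'' to compare against --- any correct argument you supply would be new.

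Your proposal is not a proof but a strategy sketch, and you yourself flag the main obstruction in part~(i): for $4$-connected triangulations with $\delta(G)\in\{4,5\}$ and no separating triangle, none of your reductions apply, and the trace bound $s^+(G)\ge \tr(A^3)/\lambda_1$ is off by a factor of $\sqrt{n}$. This is a genuine gap, not a technicality; you have no mechanism to get a linear bound in this case.

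Part~(ii) is also incomplete. The inductive step hinges entirely on the inequality $s^+(G)\ge s^+(G-v)+2$ whenever $v$ is an ear, but your variational argument does not establish it. Concretely, after the block decomposition you need
\[
\norm{A^-(G-v)-M'}_2^2 + M_{vv}^2 + 2(1+M_{uv})^2 + 2(1+M_{wv})^2 + 2\!\!\sum_{u'\notin\{u,v,w\}}\!\! M_{u'v}^2 \;\ge\; 2
\]
for every PSD $M$. The adversary can set $M' = A^-(G-v)$ and push $M_{uv},M_{wv}$ toward $-1$; the PSD constraint then forces $M_{vv}\ge \mathbf{e}^{\top}(A^-(G-v))^{\dagger}\mathbf{e}$ (Schur complement, with $\mathbf{e}$ supported on $\{u,w\}$), and whether $M_{vv}^2$ alone recovers the required $2$ depends on the entries of $A^-(G-v)$ at $u,w$ --- quantities you have no control over in general. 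The phrase ``balancing these two contributions should yield the required margin of $2$'' is exactly where the work lies, and it is not done. You would at minimum need a uniform lower bound on the relevant diagonal entries of $A^-(G-v)$ across all maximal outerplanar $G-v$, analogous to Lemma~\ref{lem:path-endpoint} for paths, and no such lemma is available.
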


The coefficients in the lower bounds in the above conjecture cannot be improved. For instance, consider the maximal planar graph $G = C_{n-2} \vee \overline{K_2}$. Assume $n$ is even. Since $\lambda_1(C_n)=2$, and using Theorem \ref{thm:join_spectrum}, the spectrum of $G$ is given by $\{1 \pm \sqrt{2n-1},  \lambda_2(C_{n-2}), \ldots, \lambda_{n-2}(C_{n-2}), 0 \}$. Thus, 
\[ s^+(G) = (1 + \sqrt{2n-1})^2 + s^+(C_{n-2}) - \lambda_1^2(C_{n-2})=3n + O(\sqrt{n}).\]

Now, take the maximal outerplanar graph $H=K_1 \vee P_{n-1}$. Again, by Theorem \ref{thm:join_spectrum}, we have $\lambda_1(H) \le \lambda_1(K_1\vee C_{n-1})= 1 + \sqrt{n}$. By the Interlacing Theorem, it follows that 
\[ s^+(H) \le s^+(P_{n-1}) + \lambda_1^2(H) \le n-1 + (1+\sqrt{n})^2 = 2n + O(\sqrt{n}).\]

\section*{Acknowledgements}
The authors would like to thank Clive Elphick for his helpful comments in the preparation of this article. 

\bibliographystyle{plainurl}
\bibliography{references.bib}

\vspace{0.3cm}
\noindent Saieed Akbari, Email: {\tt s\_akbari@sharif.edu}\\ 
The research visit of S. Akbari at Simon Fraser University was supported in part by the ERC Synergy grant (European Union, ERC, KARST, project number 101071836).\\
\textsc{Department of Mathematical Sciences, Sharif University of Technology, Tehran, Iran}\\[1pt]

\noindent Hitesh Kumar, Email: {\tt hitesh.kumar.math@gmail.com}, {\tt hitesh\_kumar@sfu.ca}\\
\textsc{Department of Mathematics, Simon Fraser University, Burnaby, BC \ V5A1S6, Canada}\\[1pt]

\noindent Bojan Mohar, Email: {\tt mohar@sfu.ca}\\
Supported in part by the NSERC Discovery Grant R832714 (Canada), by the ERC Synergy grant (European Union, ERC, KARST, project number 101071836), and by the Research Project N1-0218 of ARIS (Slovenia). On leave from FMF, Department of Mathematics, University of Ljubljana.\\
\textsc{Department of Mathematics, Simon Fraser University, Burnaby, BC \ V5A1S6, Canada}\\[1pt]

\noindent Shivaramakrishna Pragada,\\
Email: {\tt shivaramakrishna\_pragada@sfu.ca, shivaramkratos@gmail.com}\\
\textsc{Department of Mathematics, Simon Fraser University, Burnaby, BC \ V5A1S6, Canada}\\[1pt]

\noindent Shengtong Zhang, Email: {\tt stzh1555@stanford.edu}\\
Partially supported by the National Science Foundation Grant No. DMS-1928930, while the author was in residence at the Simons Laufer Mathematical Sciences Institute in Berkeley, California, in Spring 2025.\\
\textsc{Department of Mathematics, Stanford University, Stanford, CA 94305, USA}
\end{document}